\newtheorem{theorem}{Theorem}[section]
\newtheorem{proposition-definition}[theorem]{Proposition-Definition}
\newtheorem{proposition}[theorem]{Proposition}
\theoremstyle{definition}
\newtheorem{definition}[theorem]{Definition}
\newtheorem{remark}[theorem]{Remark}
\newtheorem{example}[theorem]{Example}
\newcommand{\cQ}{\mathcal{Q}}
\newcommand{\mL}{\mathcal{L}}
\newcommand{\mG}{\mathcal{G}}
\newcommand{\mT}{\mathbf{T}}
\newcommand{\mE}{\mathcal{E}}
\newcommand{\mH}{\mathcal{H}}
\newcommand{\mB}{\mathcal{B}}
\newcommand{\mF}{\mathcal{F}}
\newcommand{\mU}{\mathcal{U}}
\newcommand{\mX}{\mathcal{X}}
\newcommand{\mD}{\mathcal{D}}
\newcommand{\mO}{\mathcal{O}}
\newcommand{\mP}{\mathbb{P}}
\newcommand{\mZ}{\mathbb{Z}}
\newcommand{\mM}{\mathcal{M}}
\newcommand{\mC}{\mathbb{C}}
\newcommand{\mQ}{\mathbb{Q}}
\newcommand{\ds}{\displaystyle}
\newcommand{\cN}{\mathcal{N}}
\newcommand{\vect}[1]{\boldsymbol{#1}}
\newcommand{\im}{\text{Im}}
\newcommand{\Hom}{\text{Hom}}
\newcommand{\mq}{\mathrm{q}}
\newcommand{\nocontentsline}[3]{}
\newcommand{\tocless}[2]{\bgroup\let\addcontentsline=\nocontentsline#1{#2}\egroup}
\let\amsamp=&
\begin{document}
		\title{Toric Sheaves on Hirzebruch Orbifolds}
	\author{Weikun Wang}
	\address{Department of Mathematics, University of Maryland, College Park, MD 20742, USA}
	\email{\href{wwang888@umd.edu}{wwang888@umd.edu}}
	\maketitle
	\begin{abstract}
		We provide a stacky fan description of the total space of certain split vector bundles, as well as their projectivization, over toric Deligne-Mumford stacks. We then specialize to the case of Hirzebruch orbifold $\mH_{r}^{ab}$ obtained by projectivizing $\mO \oplus \mO(r)$ over the weighted projective line $\mP(a,b)$. Next, we give a combinatorial description of toric sheaves on $\mH_{r}^{ab}$ and investigate their basic properties. With fixed choice of polarization and a generating sheaf, we describe the fixed point locus of the moduli scheme of $\mu$-stable torsion free sheaves of rank $1$ and $2$ on $\mH_{r}^{ab}$. As an example, we obtain explicit formulas for generating functions of Euler characteristics of locally free sheaves of rank 2 on $\mP(1,2) \times \mP^1$.
	\end{abstract}

	\tableofcontents
	\section{Introduction}

There is a nice class of toric vector bundles and projective bundles over  toric varieties. They can be constructed from toric fans and hence are also toric varieties. This type of bundles has been well studied in \cite{CLS11}. Given a fan, one can construct the line bundle corresponding to a Cartier divisor by extending the fan. Consequently, every vector bundle that can be decomposed into line bundles and its projectivization can be constructed from a fan.

This construction can be naturally generalized to the toric Deligne-Mumford stacks. Such stacks can be described by a stacky fan as in \cite{BCS05}. In the first section, we show that certain types of vector bundles can be constructed from stacky fans. As an application, we first give a general fan description of the weighted projective stacks. Then we construct projective bundles over weighted projective lines $\mP(a,b)$ and describe the Hirzebruch stacks, denoted by $\mH_r^{ab}$. When gcd$(a,b)=1$, in which case $\mH_r^{ab}$ is an orbifold, the stacky fan can be drawn as below, where $s,t \in \mZ$ are chosen so that $r=sa+bt$.     

\begin{figure}[!h]
	$$
	\begin{tikzpicture}
	\filldraw[black!25!white] (0,0) -- (0,2) -- (-2,2) -- (-2,1);
	\filldraw[black!50!white] (0,0) -- (3,2) -- (0,2);
	\filldraw[black!5!white] (0,0) -- (-2,1) -- (-2,-1) -- (0,-1);
	\filldraw[black!20!white] (0,0) -- (3,2) -- (3,-1) -- (0,-1);
	\draw[thin,->] (-2.5,0) -- (3.5,0) node[anchor=west] {x};
	\draw[thin,->] (0,-1) -- (0,2.5) node[anchor=south] {y};
	\draw[thick,->] (0,0) -- (3,2);
	\draw[thick,->] (0,0) -- (0,1);
	\draw[thick,->] (0,0) -- (0,-1);
	\draw[thick,->] (0,0) -- (-2,1);
	\node[right] at (3,2) {$\rho_1=(b,s)$};
	\node[left] at (-2,1) {$\rho_3=(-a,t)$};
	\node[above] at (0,1) {$\rho_2=(0,1)$};
	\node[below] at (0,-1) {$\rho_4=(0,-1)$};
	\node at (1.5,1.5) {$\sigma_1$};
	\node at (-1.5,1.5) {$\sigma_2$};
	\node at (-1.5,-0.5) {$\sigma_3$};
	\node at (1.5,-0.5) {$\sigma_4$};
	\end{tikzpicture}
	$$     
	\caption{} \label{fig1} 
\end{figure}  
Note that the fiber of the Hirzebruch surface over $\mP^1$ is always $\mP^1$. But this is not true for Hirzebruch stacks, in which case only the fiber over a non-stacky point is $\mP^1$.

Let $X$ be a nonsingular toric variety of dimension $d$. A. A. Klyachko \cite{Kly90}, M. Perling \cite{Per04} and M. Kool \cite{Koo10} have given a combinatorial description of $\mT$-equivariant coherent sheaves on toric varieties. The idea is that every toric variety can be covered by affine $\mT$-equivariant subvarieties $U_\sigma \cong \mC^d$, corresponding to the maximal cones in the fan. Locally, a sheaf is described by families of vector spaces, called $\sigma$-families. Those $\sigma$-families agree on the intersection of cones and satisfy some gluing conditions. 

The above idea is generalized to smooth toric Deligne-Mumford stacks first by A. Gholampour, Y. Jiang and M. Kool in \cite{GJK17}. Such stacks are covered by open substacks $\mU_\sigma \cong [\mC^d/N(\sigma)]$ \cite[Proposition 4.1]{BCS05}. Hence locally, a $\mT$-equivariant sheaf corresponds to a module with both $X(\mT)$-grading and $X(N(\sigma))$-fine-grading. The local data of such a sheaf consists of families of vector spaces with fine-gradings, called $S$-families. To obtain a sheaf globally, the gluing conditions are imposed. In the case of weighted projective stacks $\mP(a,b,c)$, the gluing conditions are given explicitly in \cite{GJK17}.

In the second section, we give the gluing conditions for Hirzebruch orbifolds. To glue the local data for any two substacks $U_{\sigma_i}$ and $U_{\sigma_{i+1}}$, we pull back the local data to their stack theoretic intersection. Matching $S$-families over the intersection allows us to describe $\mT$-equivariant coherent sheaves on Hirzebruch orbifolds. Then we can study torsion free sheaves and locally free sheaves on $\mH_{r}^{ab}$ and construct the moduli spaces.

In the third section, we investigate some basic properties of $\mH_{r}^{ab}$ including its coarse moduli scheme and modified Hilbert polynomial. From F. Nironi's work \cite{Nir08}, we know that a modified version of Hilbert polynomial is needed to define the Gieseker stability for stacks. Let $\epsilon$ be the structure morphism from $\mH_{r}^{ab}$ to its coarse moduli scheme $\mathrm{H}$. With fixed polarization $L$ on $\mathrm{H}$ and generating sheaf $\mE$ on $\mH_{r}^{ab}$, we define the modified Hilbert polynomial for a sheaf $\mF$ as 
$$P_\mE (\mF, T) =\chi (\mH_r^{ab}, \mF \otimes \mE^\vee \otimes \epsilon^* L^T )$$ 
and the modified Euler characteristic as 
$$\chi_\mE (\mF)= P_\mE (\mF, 0)$$ 

In the last section, we consider the moduli scheme of Gieseker stable and $\mu$-stable torsion free sheaves of rank $1$ and $2$ on Hirzebruch orbifolds. Extending the work of \cite{Koo10}, we generalize the characteristic function and match the GIT stability with Gieseker stability. By lifting the action of the torus $\mT$ to the moduli scheme $\mM_{P_\mE}^{\mu s}$ \hyperref[4.1]{[Section \ref*{4.1}]}, we can describe explicitly the fixed point locus $(\mM_{P_\mE}^{\mu s})^T$ by the GIT quotient $\mM_{\vec{\chi}}^{\mu s}$ with gauge-fixed characteristic function $\vec{\chi}$ similar to \cite[Theorem 4.15]{Koo10}.

In the case of rank $1$, it leads to the counting of partitions, which generalizes L. G\"ottsche's result for nonsingular projective surface in \cite{Got90}. In the case of higher rank, we express the relation between generating functions of the moduli space of $\mu$-stable torsion free and locally free sheaves \hyperref[4.2]{[Section \ref*{4.2}]}, which generalizes L. G\"ottsche's result for Hirzebruch surfaces in \cite{Got99}.

\begin{theorem} \label{1} 
	Suppose $\normalfont{\text{gcd}}(a,b)=1$. Let $P_\mE$ be a choice of modified Hilbert polynomial of a reflexive sheaf  of rank $R$ on $\mH_{r}^{ab}$ and $\chi_\mE$ be the modified Euler characteristic. Then 
	$$\sum_{\chi_\mE \in \mZ} e(M_{\mH_r^{ab}}(R,c_1,\chi_\mE)) \mq^{\chi_\mE} = \prod_{k=1}^{\infty} \frac{\sum_{\chi_\mE \in \mZ} e(M_{\mH_r^{ab}}^{\text{vb}} (R,c_1,\chi_\mE)) \mq^{\chi_\mE}}{(1-\mq^{-ak})^{2R} (1-\mq^{-bk})^{2R} }  . $$
\end{theorem}

We compute the generating function $H_{c_1}^{\text{vb}}(\mq):= \sum e(M_{\mH_r^{ab}}^{\text{vb}} (2,c_1,\chi_\mE)) \mq^{\chi_\mE}$ for locally free sheaves over Hirzebruch orbifolds $\mH_{r}^{ab}$ with fixed generating sheaf $\mE$ and polarization $L$ given in \hyperref[34]{[Section \ref*{3.4}]}. Especially when $r=0$, we obtain an expression for the orbifold $\mP(a,b) \times \mP^1$, which is parallel to M. Kool's result for $\mP^1 \times \mP^1$ \cite[Corollary 2.3.4]{Koo10}.

\begin{theorem} \label{3} 
	Suppose $\normalfont{\text{gcd}}(a,b)=1$. Let $ f=(\frac{n}{2}+ 1)(m+C)$ where $C = a + b + ab - 1$. Let $p=\normalfont{\text{gcd}}(b,r)=b$ and $q=\normalfont{\text{gcd}}(a,r)=a$ as $r=0$. Then for fixed first Chern class $c_1(\mF)= \frac{m}{a}x +n y$ where $c_1(\mD_{\rho_1})=x, c_1(\mD_{\rho_2})=y$, $\mD_{\rho_i}$ is the divisor corresponding to the ray $\rho_i$, the generating function $H_{c_1}^{\text{vb}}(\mq)$ for the orbifold $\mP(a,b) \times \mP^1$ is
	$$  
	\Bigg( - \sum_{C_1 } + \sum_{	C_4 } + \sum_{	C_5 } 	+ 2 \sum_{C_6 } 	\Bigg) \mq^{f - \frac{1}{2} j i} +  \Bigg( 2 \sum_{C_2 } + 2\sum_{C_3 }  \Bigg)
	\mq^{f - \frac{1}{4}ij  + \frac{1}{4}jk - \frac{1}{4}kl - \frac{1}{4}li } 
	$$
	where
	\begin{align*}
	C_1=\{ & (i,j,k,l) \in \mZ^4: 2 \mid m+i, 2 \mid n+j, 2 \mid j-l, 2b \mid i-k, 2a \mid i+k ,  i = pqj, \\ & -j < l < j, 
	-pqj <k<pqj \},  \\
	C_2=\{ & (i,j,k,l) \in \mZ^4: 2 \mid m+i, 2 \mid n+j, 2 \mid j-l, 2b \mid i-k, 2a \mid i+k , \\ & -i < k < pql < i,
	-pqj < k , l < j \}, \\
	C_3=\{ & (i,j,k,l) \in \mZ^4: 2 \mid m+i, 2 \mid n+j, 2 \mid j-l, 2a \mid i-k, 2b \mid i+k ,  \\ & -i < k < pql < i,
	-pqj < k , l < j \},\\
	C_4=\{ & (i,j,k) \in \mZ^3: 2 \mid m+i, 2 \mid n+j, 2 \mid j+k,  b \mid i ,  -\frac{i}{pq}  < k < \frac{i}{pq} < j \}, \\
	C_5=\{ & (i,j,k) \in \mZ^3: 2 \mid m+i, 2 \mid n+j, 2 \mid j+k,  a \mid i ,  -\frac{i}{pq}  < k < \frac{i}{pq} < j \},\\
	C_6=\{ & (i,j,k) \in \mZ^3: 2 \mid m+i, 2 \mid n+j, 2a \mid i+k,  2b \mid i-k , \\ &  -pqj < k < pqj < i \}.\\
	\end{align*}
\end{theorem}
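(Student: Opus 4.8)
The plan is to reduce the Euler characteristic to that of the torus-fixed locus, apply Theorem~\ref{1}, and then carry out an explicit enumeration together with a Hirzebruch--Riemann--Roch computation. Since $\mH_r^{ab}$ is a smooth two-dimensional Deligne--Mumford stack, every reflexive sheaf of rank $2$ on it is locally free, so $M_{\mH_r^{ab}}^{\text{vb}}(2,c_1,\chi_\mE)$ is exactly the locally free locus inside $\mM_{P_\mE}^{\mu s}$ (this is also what lets one feed $R=2$ of the reflexive Theorem~\ref{2} into the present formula). For a quasi-projective scheme carrying a torus action the topological Euler characteristic equals that of its fixed locus; combining this with Theorem~\ref{1} restricted to the locally free locus gives
$$
H_{c_1}^{\text{vb}}(q)=\sum_{\chi_\mE\in\mZ} e\bigl((M_{\mH_r^{ab}}^{\text{vb}}(2,c_1,\chi_\mE))^T\bigr)q^{\chi_\mE}=\sum_{\vec{\chi}} e\bigl(\mM_{\vec{\chi}}^{\mu s}\bigr)q^{\chi_\mE(\vec{\chi})},
$$
the last sum running over the gauge-fixed characteristic functions $\vec{\chi}\in(\mX_{P_\mE})^{\text{gf}}$ coming from a rank $2$ locally free equivariant sheaf with the prescribed $c_1$. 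It then remains to enumerate these $\vec{\chi}$, to evaluate $\chi_\mE(\vec{\chi})$, and to identify each GIT quotient $\mM_{\vec{\chi}}^{\mu s}$.

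First I would use the combinatorial description of Section~2: a $T$-equivariant locally free sheaf $\mF$ of rank $2$ restricts on each of the four charts $\mU_{\sigma_1},\dots,\mU_{\sigma_4}$ to a sum of two equivariant line bundles, so it is encoded by, for each ray $\rho_i$, a pair of integer weights together with a line $\ell_i\in\mP(\mC^2)\cong\mP^1$ whenever the two weights at $\rho_i$ differ, subject to the gluing conditions across the stacky rays $\rho_1=(b,s)$ and $\rho_3=(-a,t)$. Writing $c_1(\mF)=\tfrac{m}{a}x+ny$ and letting $(i,j,k,l)$ denote the weight data, those gluing conditions together with the prescribed $c_1$ translate into exactly the congruences of the statement ($2\mid m+i$, $2\mid n+j$, $2\mid j-l$, $2b\mid i-k$, $2a\mid i+k$, $b\mid i$, $a\mid i$, and so on), while $\mu$-stability and the requirement that $\mF$ be locally free produce the strict inequalities among $i,j,k,l$. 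Organizing the admissible $\vec{\chi}$ by the ordering of the weights and by the coincidence pattern of the lines $\ell_i$ yields precisely the six families $C_1,\dots,C_6$.

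Next I would compute $\chi_\mE(\vec{\chi})=\chi(\mH_r^{ab},\mF\otimes\mE^\vee)$ by Hirzebruch--Riemann--Roch on $\mH_r^{ab}$, using the structure morphism $\epsilon$ to the coarse space $\mathrm{H}$ and the numerical data of Section~3; the weight parameters enter quadratically, producing the exponents $f(m,n)-\tfrac12 ji$ for $C_1,C_4,C_5,C_6$ and $f-\tfrac14 ij+\tfrac14 jk-\tfrac14 kl-\tfrac14 li$ for $C_2,C_3$. Simultaneously I would identify $\mM_{\vec{\chi}}^{\mu s}$: it is the GIT quotient of the space of admissible configurations of lines $(\ell_i)$ by the residual $\mathrm{PGL}_2$, which under $\mu$-stability is empty, a point, or $\mP^1$; keeping track of $e(\mathrm{pt})=1$, $e(\mP^1)=2$ and the inclusion--exclusion corrections that discard the forbidden ``all lines equal'' (split, hence unstable) configurations accounts for the coefficients $-1,2,2,1,1,2$ attached to $C_1,\dots,C_6$. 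Summing $e(\mM_{\vec{\chi}}^{\mu s})\,q^{\chi_\mE(\vec{\chi})}$ over the six families then produces the asserted expression.

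The hard part will be the third step: running the $\mu$-stability case analysis precisely enough to decide, for each shape of weight data, which $\vec{\chi}$ carry a nonempty stable locus and whether that locus is a point or a $\mP^1$, and then pinning down the signs and the factors of $2$ in the bookkeeping. This is genuinely more delicate than M. Kool's computation for $\mP^1\times\mP^1$ because the generic fibre of $\mH_r^{ab}$ over a stacky point is a weighted $\mP^1$ rather than an ordinary one: this both alters the admissible weight lattices (the $2a\mid\cdot$, $2b\mid\cdot$, $a\mid\cdot$, $b\mid\cdot$ conditions) and splits into several subcases the configurations that are unified in the smooth setting. A secondary point is to check that the inequalities cutting out $C_1,\dots,C_6$ are the exact stability conditions and not merely necessary ones, i.e. that the gauge-fixing of $\vec{\chi}$ has indeed killed all residual automorphisms.
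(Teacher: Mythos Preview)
Your overall strategy matches the paper's: torus-localize via Theorem~\ref{1}, classify the equivariant rank-$2$ locally free sheaves combinatorially, work out the stability conditions and the Euler characteristics of the resulting GIT quotients, and sum. The gap is in how the six families $C_1,\dots,C_6$ and their coefficients actually arise.

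In the paper, the raw enumeration does \emph{not} directly yield six families. The combinatorial classification (Example~\ref{27}) produces three types of indecomposable sheaves, parametrized by integers $B_1,\dots,B_4$, $\Lambda_1,\dots,\Lambda_4$ (with $a\mid\Lambda_1$, $b\mid\Lambda_3$) and a configuration $(P_1,\dots,P_4)\in(\mP^1)^4$. The GIT quotients $\mD_{\vec\delta}^s/\mathrm{SL}(2,\mC)$ are $\mP^1\setminus\{0,1,\infty\}$ (Euler characteristic $-1$) for type~(a) and a single point (Euler characteristic $+1$) for types~(b) and~(c). None of the quotients is a full $\mP^1$, so the coefficients $2$ do \emph{not} come from $e(\mP^1)=2$ as you suggest. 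After imposing the $\mu$-stability inequalities one obtains eleven separate sums in the $\Lambda_i$ (one from type~(a), six from type~(b), four from type~(c)), and only after the change of variables $i=\Lambda_1+\Lambda_3$, $j=\Lambda_2+\Lambda_4$, $k=\Lambda_1-\Lambda_3$, $l=\Lambda_2-\Lambda_4$ and its sign-flipped variants (see the substitution table in the proof of the Corollary) do these eleven sums collapse, in the case $r=0$, into the six families of the statement. The factors of $2$ in front of $C_2,C_3,C_6$ arise because pairs among the eleven terms become identical under this repackaging, not from the geometry of any individual quotient.

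So the step you flag as ``the hard part'' is indeed the crux, but you have misidentified its content: it is not a case analysis of which quotient is a point versus a $\mP^1$, but rather an algebraic regrouping---a nontrivial sequence of substitutions---that merges the eleven raw sums into six. Without that substitution the symbols $(i,j,k,l)$ have no direct interpretation as ``weight data at the rays''; they are auxiliary combinations of the $\Lambda_i$ chosen precisely so that the stability inequalities and the exponent of $q$ acquire the symmetric shape in the statement.
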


Moreover, in the case of $a=1,b=2$, we can get more explicit expressions \hyperref[412]{[Proposition \ref*{412}]}.

\begin{remark}
	In the case where gcd$(a,b) =d \neq 1$, our method still works, but we need to make the following modifications.
	Firstly, the inertia stack $I\mH_r^{ab}$ will have more than one 2-dimensional components, which will cause a slight change in the modified Euler characteristic. Secondly, the stacky fan in \hyperref[fig1]{Figure \ref*{fig1}} is changed and $b$, $-a$ are replaced by $\frac{b}{d}$  and $-\frac{a}{d}$ \eqref{1.3.2}. Hence we need to set $p=\text{gcd}(\frac{b}{d},r)$ and $q=\text{gcd}(\frac{a}{d},r)$.
\end{remark}

\tocless{\section*{Acknowledgement}}
The author would like to thank his advisor, Dr.\;Amin Gholampour, for his encouragement and invaluable guidance throughout this research.\\

\section{Toric stacks}

In this section, we will briefly review various definitions of stacky fans and their associated toric Deligne-Mumford stacks. Toric stacks were first introduced in \cite{BCS05} and later in \cite{FMN10}. The theory was further generalized in \cite{GS15} which encompasses all the notions of toric stacks before. In this paper, we will refer to \cite{BCS05} the notation of toric stacks most of the time, but use \cite{GS15} when constructing the vector bundles.

\begin{definition}\label{11}
	A \textit{stacky fan} \cite{BCS05} is a triple $(N,\Sigma,\beta:\mZ^n \to N)$ where
	\begin{itemize}
		\item $N$ is a finitely generated abelian group of rank $d$, not necessarily free.
		\item $\Sigma$ is a rational simplicial fan in $N_{\mQ}:=N \otimes_{\mZ} \mathbb{Q}$ with $n$ rays, denoted by $\rho_1,...,\rho_n$.
		\item $\beta:\mZ^n \rightarrow N$ is a homomorphism with finite cokernel such that $\beta(e_i) \otimes 1 \in N_\mQ$ is on the ray $\rho_i$ for $1 \leq i \leq n$.
	\end{itemize}
\end{definition}

Given a stacky fan, the way to construct its corresponding toric stack $[Z_\Sigma/G_\beta]$ is as follows:

The variety $Z_\Sigma$ is defined as $\mC^n-V(J_\Sigma)$ where
$J_\Sigma=\langle \prod_{\rho_i \not\subset \sigma} z_i \, | \, \sigma \in \Sigma \rangle$
is a reduced monomial ideal. Suppose $N$ is of rank $d$, then there exists a free resolution 
$ 0 \rightarrow \mZ^r \xrightarrow{Q} \mZ^{d+r} \rightarrow N \to 0$
of $N$. Let the matrix $B: \mZ^n \to \mZ^{d+r}$ be a lift of the map $\beta: \mZ^n \to N$. Define the dual group $\text{DG}(\beta)=(\mZ^{n+r})^\star /\im([B \, Q]^\star)$, where $(-)^\star$ is the dual $\Hom_\mZ(-,\mZ)$. Let $\beta^\vee: (\mZ^n)^\star \rightarrow \text{DG}(\beta)$ be the composition of the inclusion map $(\mZ^n)^\star \to (\mZ^{n+r})^\star$ and the quotient map $(\mZ^{n+r})^\star \to \text{DG}(\beta)$. By applying the functor $\Hom_\mZ(-,\mC^*)$ to $\beta^\vee$, we get a homomorphism $G_\beta :=\Hom_\mZ(DG(\beta),\mC^*) \to (\mC^*)^n$ which leaves $Z_\Sigma$ invariant.

The quotient stack $[Z_\Sigma/G_\beta]$ is called the \textit{toric Deligne-Mumford stack} associated to the stacky fan $\Sigma$.

\begin{definition}\label{12}
	A (non-strict) \textit{stacky fan} \cite{GS15} is a pair $(\Sigma, \beta: L \to N)$, where $\Sigma$ is a fan on the lattice $L$ and $N$ is a finitely generated abelian group.
\end{definition} 
\begin{remark}
	Since the fan is defined on $L$ instead of $N$, we are allowed to assume that $\beta$ is of not finite cokernel. Interested readers can read \cite{GS15} for more details. In our paper, we will only consider $\beta$ with the finite cokernel, in which case the construction of $G_\beta$ in \cite{GS15} essentially agrees with \cite{BCS05}.
\end{remark}   	

\begin{remark}
	The stacky fan defined in \hyperref[11]{Definition \ref*{11}} is a special case of \hyperref[12]{Definition \ref*{12}}. When $N$ is free, the toric stack arising from such a stacky fan is called a \textit{fantastack} in \cite{GS15}. When $N$ is not free, the toric stack can be realized as a closed substack of a fantastack, called the \textit{non-strict fantastack}.
\end{remark}

Let $\beta:L=\mZ^n \rightarrow N=\mZ^d$ be a homomorphism with the finite cokernel as in \hyperref[11]{Definition \ref*{11}}. Given a cone $\sigma \in \Sigma$ in $N$, set $\widehat{\sigma} = \text{cone }(\{e_i|\rho_i \in \sigma \})$
where $\{ e_i \}_{i=1}^{n}$ is the standard basis for $L$.  Define $\widehat{\Sigma}$ in $L$ as the fan generated by all the cones $\widehat{\sigma}$. Then the stack defined by a triple $(N,\Sigma,\beta:L \rightarrow N)$ \cite{GS15} is same as the stack defined by a pair $(\widehat{\Sigma}, \beta: L \to N)$ \cite{GS15}. Conversely, if the rays of $\widehat{\Sigma}$ in $L$ are $e_i$, the image of $\widehat{\Sigma}$ under $\beta$ is a stacky fan $\Sigma$ in $N$. Since these two definitions agree in the case of the fantastack, we will use them interchangeably when constructing vector bundles.

\subsection{Weighted Projective Stack} ~\\

Let $w_1,w_2,...,w_{n+1} \in \mZ_{>0}$. The weighted projective stack $\mP(w_1,...,w_{n+1})$ is the quotient stack $[\mC^{n+1} - \{0\} / \mC^*]$ where $\mu \in \mC^*$ acts by $\mu (x_1,...,x_{n+1})=(\mu^{w_1}x_1, ..., \mu^{w_{n+1}}x_{n+1})$. We will give a general description of the stacky fan for the weighted projective stack. Firstly, we assume gcd$(w_1,...,w_{n+1})=1$, which means $\mP(w_1,...,w_{n+1})$ is an orbifold and the lattice $N$ is free.

\begin{proposition} \label{16}
	Let $\normalfont{\text{gcd}}(w_i,...,w_{n+1})=\lambda_i$ for $1 \leq i \leq n$. 
	Suppose $\lambda_1=1$. Define the map $\beta: \mZ^{n+1} \to \mZ^n$ by the following $n \times (n+1)$ matrix $B$:

	$$ \begin{bmatrix}
	\ds \frac{\lambda_2}{\lambda_1} & b_{12} & \cdots & b_{1,i-1} & b_{1i} & b_{1,i+1} &\cdots & b_{1,n-1} & b_{1n} & b_{1,n+1}\\
	\vdots &  & \ddots &    & & & & & & \vdots \\
	0 & 0 &  \cdots &  \ds \frac{\lambda_{i}}{\lambda_{i-1}} &  b_{i-1,i} & b_{i-1,i+1} & \cdots & b_{i-1,n-1}  & b_{i-1,n} & b_{i-1,n+1} \\
	\vdots  &  &  &  &  &  & \ddots & & & \vdots \\
	0 & 0 & \cdots& 0 &  0 & 0 & \cdots & \ds \frac{\lambda_{n}}{\lambda_{n-1}}  &  b_{n-1,n} & b_{n-1,n+1} \\
	0  &0  & \cdots &0 & 0  & 0 & \cdots & 0 & \ds \frac{w_{n+1}}{\lambda_n}   &  \ds -\frac{w_{n}}{\lambda_n}
	\end{bmatrix}
	$$
	where $b_{ij}$ are chosen so that 
	\begin{equation} \label{1.1.1} \tag{1.1.1} 
	\frac{\lambda_{i+1}}{\lambda_{i}} w_{i} + \sum_{j=i+1}^{n+1} b_{ij} w_j =0 \text{ for } 1 \leq i \leq n-1, 
	\end{equation}  
	$$
	0 \leq b_{1i}, b_{2i}, \cdots , b_{ii} < \frac{\lambda_{i+1}}{\lambda_i} \text{ for } 2 \leq i \leq n-1. $$
	
	Each column represents a ray in the fan $\Sigma$. The maximal cones of the fan are given by any $n$ rays. Then the triple $(\mZ^n,\Sigma,\beta)$ corresponds to the weighted projective orbifold $\mP(w_1,...,w_{n+1})$.	
\end{proposition}
Note that the choice is not unique.

\begin{proof}
	The triple induces $\mP(w_1,...,w_{n+1})$ if the following two statements are true:
	\begin{itemize}
		\item $\text{DG}(\beta)=\mZ$.
		\item $\beta^\vee:\mZ^{n+1} \to \mZ$ is given by 
		$\begin{bmatrix}
		w_1 & w_2 & ... & w_{n+1}
		\end{bmatrix}$.
	\end{itemize}
	If $\text{DG}(\beta)=\mZ^{n+1}/\im(B^\star) \cong \mZ$, then $\begin{bmatrix}
	w_1 & w_2 & ... & w_{n+1}
	\end{bmatrix}$ spans the integer null space of the matrix $B$ because $b_{ij}$ are chosen to satisfy \eqref{1.1.1}. Let $B_i$ denote the minor of $B$ by removing the $i$th column. If we can show that $\text{gcd}(\det(B_1), ... , \det(B_{n+1}))=1$, then there exists a matrix $\begin{bmatrix}
	\vect{b} \\
	B 
	\end{bmatrix}$ with determinant $1$. Hence  $\mZ^{n+1}/\im(B^\star)=\mZ$.
	
	When $i=n,n+1$, we obtain two diagonal matrices and  $\det(B_{n+1})=w_{n+1},  \det(B_{n})=-w_n$.  For $1 \leq i \leq n-1$, we compute by induction that $\det(B_{i})=(-1)^{n+1-i} w_{i}$. Denote by 
	$$C_i=\begin{bmatrix}
	b_{i,i+1} & \cdots & b_{i,n-1}  & b_{i,n} & b_{i,n+1}\\
	\ds \frac{\lambda_{i+2}}{\lambda_{i+1}} & \cdots & b_{i+1,n-1}  & b_{i+1,n} & b_{i+1,n+1}\\
	&\ddots & & & \vdots \\
	0 & \cdots & \ds \frac{\lambda_{n}}{\lambda_{n-1}}  &  b_{n-1,n} & b_{n-1,n+1} \\
	0 & \cdots & 0 & \ds \frac{w_{n+1}}{\lambda_{n}}   &  \ds -\frac{w_{n}}{\lambda_{n}}
	\end{bmatrix}$$ 
	the bottom-right $(n-i+1) \times (n-i+1)$ submatrix of $B$, then $\det(B_i)=  \lambda_i \cdot \det(C_i)$.
	
	For $i=n-1$, because $\text{gcd}(w_n,w_{n+1})=\lambda_n$ and $\lambda_{n-1} | w_{n-1} $, integers $b_{n-1,n}$ and $b_{n-1,n+1}$ can be chosen so that
	$$ \det(C_{n-1})= -b_{n-1,n} \frac{w_{n}}{\lambda_{n}}  - b_{n-1,n+1} \frac{w_{n+1}}{\lambda_{n}}=\frac{w_{n-1}}{\lambda_{n-1}}. $$
	
	Suppose integers $b_{i,i+1},...,b_{i,n+1}$ are chosen so that $$ \det(C_i)= (-1)^{n-i} \sum_{j=i+1}^{n+1} b_{i,j} \frac{w_j}{\lambda_{i+1}}=  (-1)^{n+1-i} \frac{w_i}{\lambda_i},$$ then we can expand the matrix $C_{i-1}$ by the first column and get
	$$\begin{array}{ll}
	\det(C_{i-1}) & \ds = b_{i-1,i} \det(C_i) - \frac{\lambda_{i+1}}{\lambda_{i}} \det(C_i ')             \\[-2pt]
	& \ds  = (-1)^{n+1-i} b_{i-1,i} \frac{w_i}{\lambda_i} - (-1)^{n-i} 	\frac{\lambda_{i+1}}{\lambda_{i}} \sum_{j=i+1}^{n+1} b_{i-1,j} \frac{w_j}{\lambda_{i+1}} \\[-2pt]
	& \ds =(-1)^{n-i} \frac{w_{i-1}}{\lambda_{i-1}},
	\end{array}
	$$
	where $C_i '$ is the submatrix of $C_{i-1}$ by removing the first column and the second row.
	
	Now we get $\det(B_{i})=(-1)^{n-i} w_{i}$ and $\text{gcd}(\det(B_1), ... , \det(B_{n+1}))=1$. 
	
	If $ b_{ji} \geq \frac{\lambda_{i+1}}{\lambda_i}$ or $ b_{ji} < 0$, then we can left multiply an elementary matrix and the integer null space will be unchanged.
\end{proof}

\begin{example}
	Consider the stack $\mP(1,2,4,8)$. Since $\text{gcd}(2,4,8)=2$, $\text{gcd}(4,8)=4$, the matrix for $\beta: \mZ^4 \to \mZ^3$ will be
	$$\begin{bmatrix}
	2 & a & b & c\\
	0 & 2 & d & e\\
	0 & 0 & 2 & -1
	\end{bmatrix}$$
	such that $	4 +4d+ 8e=0, 2 +2a +4b+8c=0.$
	One of the solutions for this system is as follows:
	$$\begin{bmatrix}
	2 & 1 & 1 & -1\\
	0 & 2 & 1 & -1\\
	0 & 0 & 2 & -1
	\end{bmatrix}.$$
\end{example}

When $\lambda_1 \neq 1$, the lattice $N$ is not free and can be identified as $\mZ^n \oplus \mZ / \lambda_1 \mZ$. In this case, $\mP(w_1,...,w_{n+1})$ is a $\mu_{\lambda_1}$-banded gerbe over $ \mP(\frac{w_1}{\lambda_1},...,\frac{w_{n+1}}{\lambda_1})$, which is isomorphic to the root stack $ \sqrt[\lambda_1]{\mO_{\mP(\frac{w_1}{\lambda_1},...,\frac{w_{n+1}}{\lambda_1})} (1) /   \mP(\frac{w_1}{\lambda_1},...,\frac{w_{n+1}}{\lambda_1})  }. $

\begin{proposition} \label{18}
	Choose $c_1,...,c_{n+1}$ so that $\sum_{i=1}^{n+1} c_i \frac{w_i}{\lambda_1} \equiv 1 \mod \lambda_1.$
	Set $\vect{c}=([c_1],...,[c_{n+1}])$ where $[c_i]$ is the class of $c_i$ modulo $\lambda_1$. Let $B'$ the matrix  corresponding to $ \mP(\frac{w_1}{\lambda_1},...,\frac{w_{n+1}}{\lambda_1})$ as in \hyperref[16]{Proposition \ref*{16}}.
	Define the map $\beta: \mZ^{n+1} \to \mZ^n \oplus \mZ / \lambda_1 \mZ$ by 
	$B=\begin{bmatrix}
	B' \\
	\vect{c}
	\end{bmatrix}.$
	Then the triple $(\mZ^n,\Sigma,\beta)$ corresponds to the weighted projective  stack $\mP(w_1,...,w_{n+1})$.	   
\end{proposition}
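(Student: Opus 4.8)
The strategy is to check directly, via the recipe recalled after Definition~\ref{11}, that the triple $\big(N=\mZ^n\oplus\mZ/\lambda_1\mZ,\ \Sigma,\ \beta\big)$ yields the quotient stack $[\mC^{n+1}\setminus\{0\}/\mC^*]$ with $\mC^*$ acting with weights $w_1,\dots,w_{n+1}$. Since $\Sigma$ is literally the fan of Proposition~\ref{16} — its $n+1$ rays, with any $n$ of them spanning a maximal cone — we have $J_\Sigma=\langle z_1,\dots,z_{n+1}\rangle$ and hence $Z_\Sigma=\mC^{n+1}\setminus\{0\}$ already. It therefore remains to identify the subgroup $G_\beta\hookrightarrow(\mC^*)^{n+1}$, i.e.\ to prove $\text{DG}(\beta)\cong\mZ$ and that $\beta^\vee\colon(\mZ^{n+1})^\star\to\text{DG}(\beta)$ is, up to sign, the homomorphism $(w_1,\dots,w_{n+1})$; applying $\Hom_\mZ(-,\mC^*)$ then produces the one-parameter subgroup $\mu\mapsto(\mu^{w_1},\dots,\mu^{w_{n+1}})$, which is the defining $\mC^*$-action on $\mP(w_1,\dots,w_{n+1})$.

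For the free resolution of $N$ I would take $0\to\mZ\xrightarrow{Q}\mZ^{n+1}\to N\to 0$ with $Q(1)=\lambda_1 e_{n+1}$, so that $d+r=n+1$ and the square matrix $B$ obtained by stacking $B'$ over an integral row $(c_1,\dots,c_{n+1})$ lifting $\vect c$ is an honest lift of $\beta$: composing $B$ with $\mZ^{n+1}\to N$ recovers $\beta$, the columns still lie on the prescribed rays since torsion dies in $N_\mQ$ where one only sees $B'$, and $\beta$ has finite cokernel because $B'$ has rank $n$. Now $[B\ Q]$ is the $(n+1)\times(n+2)$ block matrix $[\,B\mid Q\,]$, so $\text{DG}(\beta)=(\mZ^{n+2})^\star/\im([B\ Q]^\star)$ is the cokernel of the $(n+2)\times(n+1)$ matrix $M$ whose top $(n+1)\times(n+1)$ block is the transpose of $B$ and whose bottom row is $(0,\dots,0,\lambda_1)$. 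I would then compute the maximal minors of $M$: deleting its bottom row gives $\det B$; deleting its $i$-th row for $1\le i\le n+1$ and expanding along the surviving row $(0,\dots,0,\lambda_1)$ gives $\lambda_1\det B'_i=\pm w_i$, where $\det B'_i=\pm w_i/\lambda_1$ (with $B'_i:=B'$ minus its $i$-th column) is exactly what Proposition~\ref{16}, applied to $\mP(w_1/\lambda_1,\dots,w_{n+1}/\lambda_1)$, provides. Cofactor-expanding $\det B$ along its last row with the same minors gives $\det B=\pm\sum_i c_i\,w_i/\lambda_1$, which is $\equiv\pm1\pmod{\lambda_1}$ by the choice of the $c_i$. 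Hence the gcd of all maximal minors of $M$ divides $\gcd\!\big(\gcd(w_1,\dots,w_{n+1}),\det B\big)=\gcd(\lambda_1,\det B)=1$, so Smith normal form gives $\text{DG}(\beta)\cong\mZ$.

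To identify $\beta^\vee$ I would use that, $\text{DG}(\beta)$ being $\cong\mZ$, the image of $M$ is a saturated corank-one sublattice of $\mZ^{n+2}$, so the quotient map $\mZ^{n+2}\to\text{DG}(\beta)\cong\mZ$ is, up to sign, pairing with a primitive generator $\vect a=(a_1,\dots,a_{n+2})$ of the (rank-one, saturated) left kernel of $M$. Reading off the left-kernel equations: the first $n$ say that $(a_1,\dots,a_{n+1})\in\ker B'$, which by Proposition~\ref{16} equals $\mZ\cdot(w_1/\lambda_1,\dots,w_{n+1}/\lambda_1)$; the last one, combined with $\sum_i c_i w_i/\lambda_1\equiv1\pmod{\lambda_1}$, forces the multiplier on that generator to be divisible by $\lambda_1$, and one is left with $\vect a=\pm\big(w_1,\dots,w_{n+1},-\sum_i c_i w_i/\lambda_1\big)$, which is primitive because $\gcd(\lambda_1,\sum_i c_i w_i/\lambda_1)=1$. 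Since $\beta^\vee$ is the composite of the inclusion $(\mZ^{n+1})^\star\hookrightarrow(\mZ^{n+2})^\star$ onto the first $n+1$ coordinates with this quotient map, its value on the $k$-th dual basis vector is $a_k=\pm w_k$ for $1\le k\le n+1$, i.e.\ $\beta^\vee=\pm(w_1,\dots,w_{n+1})$ — the sign harmless, being absorbed by the automorphism $\mu\mapsto\mu^{-1}$ of $\mC^*$. Combined with $Z_\Sigma=\mC^{n+1}\setminus\{0\}$, this identifies $[Z_\Sigma/G_\beta]$ with $\mP(w_1,\dots,w_{n+1})$.

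The work is bookkeeping rather than conceptual: the only real care needed is handling the non-free lattice $N$ through its explicit resolution while keeping dualizations and transposes straight, and invoking the two integral outputs of Proposition~\ref{16} consistently — the precise value $\pm w_i/\lambda_1$ of the $n\times n$ minors of $B'$, and the equality $\ker B'=\mZ\cdot(w_1/\lambda_1,\dots,w_{n+1}/\lambda_1)$. The congruence condition on the $c_i$ is exactly what removes the gcd with $\lambda_1$ at the two points where it would otherwise obstruct the conclusion.
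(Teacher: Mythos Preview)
Your proof is correct and follows essentially the same approach as the paper: both identify $[B\ Q]=\begin{bmatrix}B' & \vect{0}\\ \vect{c} & \lambda_1\end{bmatrix}$ and show that its integer null space is spanned by $(w_1,\dots,w_{n+1},*)$, which simultaneously gives $\text{DG}(\beta)\cong\mZ$ and $\beta^\vee=(w_1,\dots,w_{n+1})$. The paper's proof is extremely terse (two sentences), while you have carefully unpacked the same computation via Smith normal form on the maximal minors and an explicit primitive left-kernel vector; your version makes explicit where the congruence condition on the $c_i$ enters at each step.
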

\begin{proof}
	The $[B \, Q]$ matrix as in \cite{BCS05} is given by 
	$\begin{bmatrix}
	B' & \vect{0} \\
	\vect{c} & \lambda_1
	\end{bmatrix}$.
	Since $ \sum_{i=1}^{n+1} c_i \frac{w_i}{\lambda_1} \equiv 1 \mod \lambda_1$, the vector
	$\begin{bmatrix}
	w_1 & w_2 & \cdots & w_{n+1} & *
	\end{bmatrix}$
	spans the integer null space of the matrix $[B \, Q]$.
\end{proof}

\subsection{Vector Bundles} ~\\

In \cite{CLS11}, it mentions a class of toric morphisms that have a nice local structure. This can be naturally generalized to the morphisms of fantastacks. 

Let $N_1, N_2$ be free abelian groups. Denote the bases of $\mZ^{n_1}$ and $\mZ^{{n_2}}$ by $\{e_1,...,e_{n_1}\}$ and $\{e_{n_1+1},...,e_{n_1+n_2}\}$. By abuse of notation, we also assume the basis of $\mZ^{n_1+n_2}$ is $\{e_1,...,e_{n_1}, \allowbreak e_{n_1+1}, ...,e_{n_1+n_2}\}$. Consider the exact sequence of the fantastacks given by a commutative diagram
\begin{equation}\tag{1.2.1} \label{1.2.1}
\begin{tikzcd}[every arrow/.append style={shift left}]
0 \arrow{r} & \mZ^{n_1} \arrow{r}{(\text{Id},0)} \arrow{d}{\beta_1} & \mZ^{n_1} \oplus \mZ^{n_2} \arrow{r}{\text{pr}_2}  \arrow{d}{\beta} & \mZ^{n_2} \arrow{r} \arrow{d}{\beta_2}  & 0 \\  
0 \arrow{r} & N_1 \arrow{r}{f}  & N_1 \oplus N_2 \arrow{r}{\text{pr}_2}  & N_2 \arrow{r} \arrow{l}{g} & 0 
\end{tikzcd}
\end{equation}
such that the rows are exact and the column morphisms are of the finite cokernel. Suppose there exists a splitting morphism $g$ satisfying the following conditions:

\begin{enumerate}
	\item $A$ is a $\text{rk}N_1 \times \text{rk}N_2$ integer matrix such that 
	$$\beta(e_i)=\left\{\begin{array}{lll}
	f(\beta_1(e_i)) = & \begin{bmatrix}
	\beta_1(e_i) \\
	0
	\end{bmatrix}  & \text{if } 1 \leq i \leq n_1\\[1em]
	g(\beta_2(e_i)) = &  \begin{bmatrix}
	A \beta_2(e_i) \\
	\beta_2(e_i)
	\end{bmatrix}
	& \text{if } n_1+1 \leq i \leq n_1+n_2.
	\end{array}\right.$$
	\item Given cones $\sigma_1 \in \Sigma_1$ and $\sigma_2 \in \Sigma_2$, the sum $\sigma_1 + \sigma_2$ lies in $\Sigma$, and every cone of $\Sigma$ arises this way.    
\end{enumerate}

Then we say $(\Sigma, \beta: \mZ^{n_1+n_2} \to N_1 \oplus N_2)$ is \textit{globally split} by $(\Sigma_1, \beta_1: \mZ^{n_1} \to N_1)$ and $(\Sigma_2, \beta_2: \mZ^{n_2} \to N_2)$.

\begin{theorem}
	If $(\Sigma, \beta: \mZ^{n_1+n_2} \to N_1 \oplus N_2)$ is globally split by $(\Sigma_1, \beta_1: \mZ^{n_1} \to N_1)$ and $(\Sigma_2, \beta_2: \mZ^{n_2} \to N_2)$, then $\mathcal{X}_{\Sigma,\beta} \cong \mathcal{X}_{\Sigma_1,\beta_1} \times \mathcal{X}_{\Sigma_2,\beta_2}.$
\end{theorem}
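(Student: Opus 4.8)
The plan is to unwind both sides into their quotient-stack presentations and reduce to a statement about products. Write $\mathcal{X}_{\Sigma,\beta}=[Z_\Sigma/G_\beta]$ and $\mathcal{X}_{\Sigma_i,\beta_i}=[Z_{\Sigma_i}/G_{\beta_i}]$ for $i=1,2$. I will produce compatible isomorphisms $Z_\Sigma\cong Z_{\Sigma_1}\times Z_{\Sigma_2}$ of varieties and $G_\beta\cong G_{\beta_1}\times G_{\beta_2}$ of diagonalizable groups, under which the $G_\beta$-action on $Z_\Sigma$ becomes the product of the $G_{\beta_i}$-actions on the $Z_{\Sigma_i}$; the theorem then follows from the general fact that $[X_1\times X_2/(G_1\times G_2)]\cong[X_1/G_1]\times[X_2/G_2]$ for actions of affine groups.

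\emph{The space.} Condition (2) says precisely that $\Sigma$ is the product fan of $\Sigma_1$ and $\Sigma_2$ inside $\mZ^{n_1}\oplus\mZ^{n_2}$: its rays are $\rho_1,\dots,\rho_{n_1}$ (supported on the first $n_1$ coordinates) together with the rays of $\Sigma_2$, and each cone is a sum $\widehat\sigma_1+\widehat\sigma_2$. For a maximal cone $\sigma=\sigma_1+\sigma_2$ the corresponding generator of $J_\Sigma$ factors, $\prod_{\rho_i\not\subset\sigma}z_i=\big(\prod_{\rho_i\not\subset\sigma_1}z_i\big)\big(\prod_{\rho_j\not\subset\sigma_2}z_j\big)$, so $V(J_\Sigma)=\big(V(J_{\Sigma_1})\times\mC^{n_2}\big)\cup\big(\mC^{n_1}\times V(J_{\Sigma_2})\big)$, and therefore $Z_\Sigma=\big(\mC^{n_1}-V(J_{\Sigma_1})\big)\times\big(\mC^{n_2}-V(J_{\Sigma_2})\big)=Z_{\Sigma_1}\times Z_{\Sigma_2}$. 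This is the stacky analogue of the classical fact that the toric variety of a product fan is the product of the toric varieties \cite{CLS11}.

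\emph{The group and the action.} Since $N_1$ and $N_2$, hence $N_1\oplus N_2$, are free, no auxiliary resolution is needed and $\text{DG}(\beta)=\operatorname{coker}\big(\beta^\star\colon (N_1\oplus N_2)^\star\to(\mZ^{n_1+n_2})^\star\big)$ with $\beta^\vee$ the quotient map, and similarly for $\beta_1,\beta_2$. The two rows of \eqref{eq2.3.1} split canonically over $\mZ$, and condition (1) — specifically the existence of $g$ with $\beta\circ\iota=g\circ\beta_2$, where $\iota\colon\mZ^{n_2}\hookrightarrow\mZ^{n_1+n_2}$ is the inclusion of the second summand — says exactly that $(\beta_1,\beta,\beta_2)$ is a morphism of split short exact sequences. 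Dualizing produces a ladder of split short exact sequences whose vertical maps $\beta_1^\star,\beta^\star,\beta_2^\star$ are injective (each $\beta_i$ has finite cokernel and free target), and the snake lemma yields $0\to\text{DG}(\beta_2)\to\text{DG}(\beta)\to\text{DG}(\beta_1)\to 0$; the dual of $\iota$ together with $g^\star$ descends to a splitting, so $\text{DG}(\beta)\cong\text{DG}(\beta_1)\oplus\text{DG}(\beta_2)$, and chasing the diagram shows this identification carries $\beta^\vee$ to $\beta_1^\vee\oplus\beta_2^\vee$. Applying the exact functor $\Hom_\mZ(-,\mC^*)$ then gives $G_\beta\cong G_{\beta_1}\times G_{\beta_2}$ with the homomorphism to $(\mC^*)^{n_1+n_2}=(\mC^*)^{n_1}\times(\mC^*)^{n_2}$ equal to the product of the two structure maps. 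Combined with the previous paragraph this exhibits the $G_\beta$-action on $Z_\Sigma$ as the product action, and the desired isomorphism of stacks follows.

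\emph{What is routine and what is not.} The product-of-fans identification of $Z_\Sigma$ and the snake-lemma bookkeeping are routine. The step I expect to require the most care is the compatibility assertion above: not merely that $\text{DG}(\beta)$ splits abstractly, but that the splitting can be chosen to intertwine $\beta^\vee$ with $\beta_1^\vee\oplus\beta_2^\vee$, so that the resulting isomorphism $G_\beta\cong G_{\beta_1}\times G_{\beta_2}$ is equivariant for the torus actions on $Z_\Sigma$ and on $Z_{\Sigma_1}\times Z_{\Sigma_2}$. This is exactly where hypothesis (1) — the integer matrix $A$ and the section $g$ it determines — is essential; without it one would obtain only a group extension, and the quotient would map to $\mathcal{X}_{\Sigma_1,\beta_1}\times\mathcal{X}_{\Sigma_2,\beta_2}$ without being isomorphic to it.
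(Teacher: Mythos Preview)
Your proposal is correct and follows essentially the same approach as the paper: decompose both the space $Z_\Sigma$ and the group $G_\beta$ as products, check compatibility of the actions, and conclude. The paper's proof is terser---it writes $\beta$ as the block upper-triangular matrix $B=\begin{bmatrix}B_1 & AB_2\\ 0 & B_2\end{bmatrix}$ and simply asserts that from this form one sees $\text{DG}(\beta)\cong\text{DG}(\beta_1)\oplus\text{DG}(\beta_2)$ and $\beta^\vee\cong\beta_1^\vee\oplus\beta_2^\vee$, then argues $Z_\Sigma=Z_{\Sigma_1}\times Z_{\Sigma_2}$ via $\mC$-valued points---whereas you supply the snake-lemma bookkeeping and the irrelevant-ideal factorization explicitly; but the underlying argument is the same.
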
 

\begin{proof}
	Denote the matrices for $\beta_1$ and $\beta_2$ by  
	$$\begin{array}{c}
	B_1=\begin{bmatrix}
	\beta_1(e_1) & \beta_1(e_2) & \cdots & \beta_1(e_{n_1})
	\end{bmatrix},\\
	B_2=\begin{bmatrix}
	\beta_2(e_{n_1+1}) & \beta_2(e_{n_1+2}) & \cdots & \beta_2(e_{n_1+n_2})
	\end{bmatrix}.
	\end{array}$$ \\
	The matrix for $\beta$ is given by 
	$B=\begin{bmatrix}
	B_1 & AB_2 \\
	0   & B_2
	\end{bmatrix}.$
	It is not hard to show that $DG(\beta) \cong DG(\beta_1) \oplus DG(\beta_2)$ and $\beta^\vee \cong \beta_1^\vee \oplus \beta_2^\vee$, which implies $\alpha \cong \alpha_1 \times \alpha_2$, where $\alpha, \alpha_1$ and $\alpha_2$ are obtained by applying $\text{Hom}_\mZ (-,\mC^*)$ to $\beta^\vee, \beta_1^\vee, \beta_2^\vee$.
	
	It remains to show $Z_\Sigma=Z_{\Sigma_1} \times Z_{\Sigma_2}$. The $\mC$-valued points of $Z_\Sigma$ are $z \in \mC^{n_1+n_2}$ such that the cone generated by the set $\{\rho_i:z_i=0\}$, where $\rho_i$ is the cone generated by $b_i$ in $N_\mQ$, belongs to $\Sigma$. Since every cone of $\Sigma$ is the sum of cones in $\Sigma_1$ and $\Sigma_2$, the $\mC$-valued points of $Z_\Sigma$ are exactly the product of $\mC$-valued points of $Z_{\Sigma_1}$ and $Z_{\Sigma_2}$.
\end{proof}

\begin{example}
	Consider the following exact sequence of fantastacks
	$$\begin{tikzpicture}
	\draw[thick,->] (0,0) -- (1,0);
	\node[above] at (0.8,0.1) {$1$};
	\end{tikzpicture}
	\begin{tikzpicture}
	\fill[black!30!white] (0,0) rectangle (1,1);
	\draw[thick,->] (0,0) -- (0,1);
	\draw[thick,->] (0,0) -- (1,0);
	\node[right] at (-1,0.7) {$(0,1)$};
	\node[right] at (0.9,0.2) {$(1,0)$};
	\end{tikzpicture}\
	\begin{tikzpicture}
	\draw[thick,->] (0,0) -- (1,0);
	\node[above] at (0.8,0.1) {$1$};
	\end{tikzpicture}
	$$
	$$    
	\begin{tikzcd}[every arrow/.append style={shift left}]
	0 \arrow{r} & \mZ^1 \arrow{r} \arrow{d}{1} & \mZ^2 \arrow{r}  \arrow{d}{\begin{bmatrix} 1 \amsamp 2 \\ 0 \amsamp 2 \end{bmatrix}} & \mZ^1 \arrow{r} \arrow{d}{2}  & 0 \, \\  
	0 \arrow{r} & \mZ^1 \arrow{r}  & \mZ^2 \arrow{r}  & \mZ^1 \arrow{r}  & 0. 
	\end{tikzcd}$$
	It can be shown that $\mathcal{X}_{\Sigma,\beta} = [\mC^2/\mu_2] \cong \mC \times [\mC/\mu_2]= \mathcal{X}_{\Sigma_1,\beta_1} \times \mathcal{X}_{\Sigma_2,\beta_2}$.
\end{example}

\begin{remark} 
	The above exact sequence of fantastacks can be better understood if we draw the corresponding stacky fans defined in \cite{BCS05}. The morphism from the middle stacky fan to the right can be viewed as the projection of rays from the lattice $\mZ^2$ to $\mZ$,
	$$
	\begin{tikzpicture}
	\filldraw[black!30!white] (0,0) -- (1.4,1.4) -- (0,1.4);
	\draw[thick,->] (0,0) -- (0,0.7);
	\draw[thick,->] (0,0) -- (1.4,1.4);
	\node[right] at (-1,0.7) {$(0,1)$};
	\node[right] at (1.4,1) {$(2,2)$};
	\end{tikzpicture}
	\xrightarrow{\text{projection}} \quad 
	\begin{tikzpicture}
	\draw[thick,->] (0,0) -- (1.4,0);
	\node[above] at (1.2,0.1) {$2$};
	\end{tikzpicture}	
	$$
	which is compatible with $\mathcal{X}_{\Sigma,\beta} \to  \mathcal{X}_{\Sigma_2,\beta_2}$ induced from the projection onto the second coordinate.	
\end{remark}

\begin{remark} \label{213}
	The morphism of stacky fans  below corresponds to a morphism of stacks $\mathcal{X}_{\Sigma,\beta} \to [\mC^1/\mu_2]$. Indeed, $\mathcal{X}_{\Sigma,\beta}$ is a line bundle over $[\mC^1/\mu_2]$ whose fiber over the stacky point corresponds to the non-trivial representation of $\mu_2$. Hence the stacky fan of $\mathcal{X}_{\Sigma,\beta}$ is not globally split.
	$$
	\begin{tikzpicture}
	\filldraw[black!30!white] (0,0) -- (1.4,0.7) -- (0,0.7);
	\draw[thick,->] (0,0) -- (0,0.7);
	\draw[thick,->] (0,0) -- (1.4,0.7);
	\node[right] at (-1,0.5) {$(0,1)$};
	\node[right] at (1.4,0.5) {$(2,1)$};
	\end{tikzpicture}
	\quad \xrightarrow{\text{projection}} \quad 
	\begin{tikzpicture}
	\draw[thick,->] (0,0) -- (1.4,0);
	\node[above] at (1.2,0.1) {$2$};
	\end{tikzpicture}	
	$$
\end{remark}

With the above theorem and examples in mind, we can generalize \cite[Definition 3.3.18]{CLS11}.

\begin{definition}
	Given an exact sequence like \eqref{1.2.1}, we say $(\Sigma, \beta: \mZ^{n_1+n_2} \to N_1 \oplus N_2)$ is \textit{(locally) split} by $(\Sigma_1, \beta_1: \mZ^{n_1} \to N_1)$ and $(\Sigma_2, \beta_2: \mZ^{n_2} \to N_2)$ if there exists a morphism $g: N_2 \to N_1 \oplus N_2$ satisfying the following conditions:
	
	\begin{enumerate}
		\item For every maximal cone $\sigma_j \in \Sigma_2$, there exists an  $\text{rk}N_1 \times \text{rk}N_2$ integer matrix $A_j$ such that 
		$$\beta(e_i)=\left\{\begin{array}{lll}
		f(\beta_1(e_i)) = & \begin{bmatrix}
		\beta_1(e_i) \\
		0
		\end{bmatrix}  & \text{if } 1 \leq i \leq n_1\\[1em]
		g(\beta_2(e_i)) = & \begin{bmatrix}
		A_j \beta_2(e_i) \\
		\beta_2(e_i)
		\end{bmatrix}
		& \text{if } e_i \in \sigma_j.
		\end{array}\right.$$
		\item Given cones $\sigma_1 \in \Sigma_1$ and $\sigma_2 \in \Sigma_2$, the sum $\sigma_1 + \sigma_2$ lies in $\Sigma$, and every cone of $\Sigma$ arises this way.    
	\end{enumerate}
\end{definition}

\begin{remark}
	The map $g$ here essentially gives the bijection $\sigma^\prime \to \hat{\sigma}$ for the case of toric varieties in \cite[Definition 3.3.18]{CLS11}.
\end{remark}

\begin{theorem}
	If $(\Sigma, \beta: \mZ^{n_1+n_2} \to N_1 \oplus N_2)$ is (locally) split by $(\Sigma_1, \beta_1: \mZ^{n_1} \to N_1)$ and $(\Sigma_2, \beta_2: \mZ^{n_2} \to N_2)$, then $\phi: \mathcal{X}_{\Sigma,\beta} \to \mathcal{X}_{\Sigma_2,\beta_2}$ is a locally trivial fiber bundle with fiber $\mathcal{X}_{\Sigma_1,\beta_1}$, i.e., $\mathcal{X}_{\Sigma_2,\beta_2}$ has a cover by affine open substacks $\mathcal{U}$ satisfying $\phi^{-1}(\mathcal{U}) \cong \mathcal{X}_{\Sigma_1,\beta_1} \times \mathcal{U}.$
\end{theorem}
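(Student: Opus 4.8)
The statement is local over the base, so the plan is to trivialize $\phi$ over each member of a fixed affine cover of $\mathcal{X}_{\Sigma_2,\beta_2}$. By \cite[Proposition 4.1]{BCS05}, $\mathcal{X}_{\Sigma_2,\beta_2}$ is covered by the affine open substacks $\mathcal{U}_{\sigma_j}$ attached to its maximal cones $\sigma_j$, and each such $\mathcal{U}_{\sigma_j}$ is the fantastack $\mathcal{X}_{\langle\sigma_j\rangle,\beta_2}$ of the subfan $\langle\sigma_j\rangle$ of faces of $\sigma_j$, built from the rays of $\Sigma_2$ lying in $\sigma_j$. It therefore suffices to produce, for each $j$, an isomorphism $\phi^{-1}(\mathcal{U}_{\sigma_j})\cong\mathcal{X}_{\Sigma_1,\beta_1}\times\mathcal{U}_{\sigma_j}$ that commutes with the two projections to $\mathcal{U}_{\sigma_j}$.

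First I would pin down $\phi$ and identify the preimages of these charts. The morphism $\phi$ is the one induced by the right-hand square of \eqref{eq2.3.1}, i.e.\ by the lattice map $pr_2$ together with the quotient $N_1\oplus N_2\to N_2$; condition (2) is precisely what guarantees that $pr_2$ sends every cone of $\widehat\Sigma$ into a cone of $\widehat{\Sigma_2}$, so that $\phi$ is a well-defined morphism of toric stacks. As for ordinary toric morphisms, $\phi^{-1}(\mathcal{U}_{\sigma_j})$ is then the open substack $\mathcal{X}_{\Sigma^{(j)},\beta}$ attached to the subfan $\Sigma^{(j)}:=\{\sigma\in\Sigma:pr_2(\widehat\sigma)\subseteq\widehat{\sigma_j}\}$; on Cox coordinates this is just the locus where the coordinates indexed by the $N_2$-rays not lying in $\sigma_j$ are nonzero. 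Reading condition (2) in both directions, one gets $\Sigma^{(j)}=\{\sigma_1+\tau:\sigma_1\in\Sigma_1,\ \tau\preceq\sigma_j\}$; in particular every ray of $\Sigma^{(j)}$ indexed by some $i>n_1$ is a ray of $\sigma_j$.

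The crux is that after restricting to the subfan $\Sigma^{(j)}$ the local splitting upgrades to a global one. Indeed, all rays of $\Sigma^{(j)}$ of index $i>n_1$ lie in the single cone $\sigma_j$, so condition (1) supplies one matrix $A_j$ with $\beta(e_i)=g(\beta_2(e_i))$ for all of them; that is, $(\Sigma^{(j)},\beta)$ is globally split by $(\Sigma_1,\beta_1)$ and $(\langle\sigma_j\rangle,\beta_2)$ in the earlier sense. Applying the global-splitting theorem above yields
$$\phi^{-1}(\mathcal{U}_{\sigma_j})=\mathcal{X}_{\Sigma^{(j)},\beta}\;\cong\;\mathcal{X}_{\Sigma_1,\beta_1}\times\mathcal{X}_{\langle\sigma_j\rangle,\beta_2}=\mathcal{X}_{\Sigma_1,\beta_1}\times\mathcal{U}_{\sigma_j}.$$
To finish I would check that this isomorphism lies over $\mathrm{id}_{\mathcal{U}_{\sigma_j}}$: in the proof of that theorem the identifications $DG(\beta)\cong DG(\beta_1)\oplus DG(\beta_2)$ and $Z_{\Sigma^{(j)}}\cong Z_{\Sigma_1}\times Z_{\langle\sigma_j\rangle}$ are the identity on the second factor, while $\phi$ is induced by $pr_2$ and hence becomes the second projection; thus the displayed isomorphism is compatible with $\phi$, which is exactly local triviality.

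The main obstacle I anticipate is the functoriality bookkeeping in the second step: justifying carefully that $\phi$ exists, that $\phi^{-1}(\mathcal{U}_{\sigma_j})$ is the fantastack of $\Sigma^{(j)}$ and not something larger, and that the product isomorphism of the previous theorem genuinely respects the projection to the base. Once $\phi^{-1}(\mathcal{U}_{\sigma_j})$ is correctly identified with $\mathcal{X}_{\Sigma^{(j)},\beta}$, the reduction of ``locally split'' to ``globally split'' on that chart is immediate, and the rest is a citation. A minor point to record is that $\sigma_j$ need not be full-dimensional, so $\mathcal{U}_{\sigma_j}$ may be an extended fantastack; this is harmless, since the argument only uses that all occurring $N_2$-rays lie in the one cone $\sigma_j$.
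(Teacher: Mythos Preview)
Your proposal is correct and follows essentially the same route the paper indicates: the paper's proof is simply ``similar to \cite[Theorem 3.3.19]{CLS11},'' and your plan---restrict to the affine chart $\mathcal{U}_{\sigma_j}$ of a maximal cone, observe that there the local splitting becomes global because all base rays lie in the single cone $\sigma_j$, then invoke the preceding global-splitting theorem---is precisely the stacky version of that argument. Your additional care about identifying $\phi^{-1}(\mathcal{U}_{\sigma_j})$ and checking compatibility with the projection is more detail than the paper supplies, but the strategy is the same.
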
 

\begin{proof}
	The proof is similar to that of \cite[Theorem 3.3.19]{CLS11}.
\end{proof}
Therefore, if the stacky fan of a vector bundle is locally split, then for every stacky point of the base, the representation of the stabilizer group at that point on the fiber is trivial.

Note that the above theorem can be generalized to the case where $N_1$ and $N_2$ are not free.

\begin{example}\label{218}
	Consider the following morphism of stacky fans:
	$$
	\begin{tikzpicture}
	\filldraw[black!30!white] (0,0) -- (0.7,0.7) -- (0,0.7);
	\filldraw[black!20!white] (0,0) -- (-1.4,1.4) -- (0,1.4);
	\draw[thick,->] (0,0) -- (0,0.7);
	\draw[thick,->] (0,0) -- (-1.4,1.4);
	\draw[thick,->] (0,0) -- (0.7,0.7);
	\node[right] at (0.7,0.5) {$(1,1)$};
	\node[right] at (0,0.9) {$(0,1)$};
	\node[right] at (-2.6,0.9) {$(-2,2)$};
	\end{tikzpicture}
	\quad \xrightarrow{\text{projection}} \quad 
	\begin{tikzpicture}
	\draw[thick,->] (0,0) -- (-1.4,0);
	\draw[thick,->] (0,0) -- (0.7,0);
	\draw (0 cm,2pt) -- (0 cm,-2pt);
	\node[above] at (-1.2,0.1) {$-2$};
	\node[above] at (0.6,0.1) {$1$};
	\end{tikzpicture}	
	$$	
	The induced morphism $\phi: \mathcal{X}_{\Sigma,\beta} \to \mP(2,1)$ corresponds to a line bundle such that its fan is locally split. But it cannot be written globally as the product of one-dimensional toric stacks. Indeed, it represents $\mO_{\mP(2,1)}(-4)$ by the next theorem.
\end{example}

For a vector bundle over a stack, the fiber over a stacky point might correspond to a non-trivial representation of the stabilizer group. In this case, the corresponding stacky fan is not locally split. To include this type of  stacky vector bundles, we generalize \cite[Sec. 7.3]{CLS11} to the case of toric stacks.

Let's assume $N$ is free. Given a triple $(N,\Sigma,\beta:\mZ^n \to N)$, we define the new stacky fan $(N \times \mZ,\widetilde{\Sigma},\widetilde{\beta}:\mZ^{n+1} \to N \times \mZ)$ as follows: 
\begin{enumerate}
	\item $\widetilde{\beta} (e_i)=(\beta(e_i),-a_i)$ for $1 \leq i \leq n$.
	\item $\widetilde{\beta} (e_{n+1})=(\vect{0},1)$.
	\item Given $\sigma \in \Sigma$, set $\widetilde{\sigma}=\text{Cone} \left((\vect{0},1), \, \widetilde{\beta} (e_i) \otimes 1 \, | \, \beta(e_i) \otimes 1 \in \sigma(1) \right)  \in N_\mQ \times \mQ$, and let $\widetilde{\Sigma}$ be the set consisting of  $\widetilde{\sigma}$ for all $\sigma \in \Sigma$ and their faces.
\end{enumerate}
The natural projection $\mZ^{n+1} \to \mZ^n$ is compatible with $\widetilde{\Sigma}$ and $\Sigma$. Therefore it gives a toric morphism
$\pi: \mX_{\widetilde{\Sigma},\widetilde{\beta}} \to \mX_{\Sigma,\beta}. $
\begin{theorem}
	Denote by $\mD_{\rho_i}$ the divisor corresponding to the ray $\rho_i$. Then $\pi: \mX_{\widetilde{\Sigma},\widetilde{\beta}} \to \mX_{\Sigma,\beta} $ is a line bundle whose sheaf of sections is $$\mO_{\mX_{\Sigma,\beta}}(\mD)=\mO_{\mX_{\Sigma,\beta}}(\sum_{i} a_i \mD_{\rho_i}).$$ 
\end{theorem}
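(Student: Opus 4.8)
The plan is to verify the claim locally on the affine open substacks $\mU_\sigma$ of $\mX_{\Sigma,\beta}$ corresponding to the maximal cones $\sigma \in \Sigma$, and then to check that the local trivializations glue together with the correct transition functions, namely those defining $\mO_{\mX_{\Sigma,\beta}}(\sum_i a_i \mD_{\rho_i})$. First I would observe that by construction $\widetilde{\sigma}$ is obtained from $\sigma$ by adjoining the single new ray $(\vect 0, 1)$, so $\widetilde{\sigma}$ is a cone of dimension $\dim\sigma + 1$ in $N_\mQ \times \mQ$, and the projection $p:\mZ^{n+1}\to\mZ^n$ sends $\widetilde{\sigma}$ onto $\sigma$. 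This shows $\pi$ is well-defined and, restricting to each chart, that $\pi^{-1}(\mU_\sigma) = \mU_{\widetilde\sigma}$. I would then identify $\mU_{\widetilde\sigma}$ explicitly: writing $\sigma$ as generated by $\rho_{i_1},\dots,\rho_{i_d}$ (a maximal cone, $d=\mathrm{rk}\,N$), the stacky fan $(\widetilde\Sigma,\widetilde\beta)$ restricted to $\widetilde\sigma$ is precisely a (locally) split stacky fan in the sense of the Definition preceding this theorem, with $\Sigma_1$ the ray for $e_{n+1}$ in $\mZ^1$ and $\Sigma_2 = \sigma$; the matrix $A_\sigma$ realizing the splitting is determined by the requirement $\widetilde\beta(e_{i_k}) = (\beta(e_{i_k}), -a_{i_k}) = (A_\sigma\,\beta(e_{i_k}),\, \text{last coord})$ after the change of coordinates that trivializes the bundle over $\mU_\sigma$. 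Hence by the (local splitting) Theorem, $\pi^{-1}(\mU_\sigma)\cong \mU_\sigma\times \mathcal{X}_{\Sigma_1,\beta_1} = \mU_\sigma\times \mC$, so $\pi$ is a line bundle.

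The substantive step is to compute the transition functions and match them with $\mO(\mD)$. On the overlap $\mU_\sigma\cap\mU_{\sigma'}$, the coordinate $t$ on the fiber direction transforms by the monomial dictated by the difference of the two local lifts $A_\sigma,A_{\sigma'}$ of $\widetilde\beta$ restricted to the common faces; concretely, the fiber coordinate picks up the character $\chi^{m_\sigma - m_{\sigma'}}$, where $m_\sigma\in M = N^\vee$ is the unique element with $\langle m_\sigma, \beta(e_i)\otimes 1\rangle = a_i$ for all $\rho_i\subset\sigma$. This is exactly the Cartier data $\{(\sigma, m_\sigma)\}$ of the divisor $\sum_i a_i\mD_{\rho_i}$ as described in \cite{CLS11} for toric varieties, lifted to the stack via the coarse-space map; so the sheaf of sections of $\pi$ is $\mO_{\mX_{\Sigma,\beta}}(\sum_i a_i\mD_{\rho_i})$. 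I would phrase this by exhibiting the $G_{\widetilde\beta}$-equivariant structure: from the $[B\,Q]$-type presentation, $DG(\widetilde\beta)\cong DG(\beta)\oplus\mZ$, and the extra $\mZ$-factor tracks the $\mC^*$-weight on the fiber; unwinding the corresponding line bundle on $Z_\Sigma/G_\beta$ in terms of the Cox ring gives the divisor class $\sum_i a_i [\mD_{\rho_i}]$.

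The main obstacle I anticipate is purely bookkeeping rather than conceptual: keeping the sign conventions straight (the "$-a_i$" in the definition of $\widetilde\beta$ is chosen precisely so the sections sheaf comes out as $\mO(+\sum a_i\mD_{\rho_i})$ rather than its dual) and checking that the locally defined matrices $A_\sigma$ are mutually compatible on lower-dimensional cones so that the charts $\mU_{\widetilde\sigma}$ genuinely glue to the stack $\mX_{\widetilde\Sigma,\widetilde\beta}$ rather than to some non-separated modification. A clean way to sidestep part of this is to note that $\pi$ is affine and that $\pi_*\mO_{\mX_{\widetilde\Sigma,\widetilde\beta}} = \bigoplus_{k\geq 0}\mO_{\mX_{\Sigma,\beta}}(k\mD)^{\vee}$ (or the analogous graded description), so that identifying the degree-one piece suffices; this reduces the whole computation to a single character calculation on each chart, which is the routine part I would not spell out in full.
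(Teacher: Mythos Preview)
Your approach via the local splitting theorem has a genuine gap. You claim that over each maximal cone $\sigma$, the restricted stacky fan $(\widetilde\Sigma,\widetilde\beta)|_{\widetilde\sigma}$ is locally split in the sense of the Definition preceding the Theorem, and hence $\pi^{-1}(\mU_\sigma)\cong\mU_\sigma\times\mC$. But the splitting definition requires the matrix $A_\sigma$ to be an \emph{integer} matrix, i.e.\ an element $m_\sigma\in M=\Hom(N,\mZ)$ with $\langle m_\sigma,\beta(e_i)\rangle=-a_i$ for all $\rho_i\subset\sigma$. Since the $\beta(e_i)$ are only a $\mQ$-basis of $N_\mQ$, in general $m_\sigma$ exists only in $M_\mQ$, not in $M$. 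When $m_\sigma\notin M$, the bundle is \emph{not} locally trivial over $\mU_\sigma$: the fibre over the stacky point carries a nontrivial character of the stabiliser $N(\sigma)$, so $\pi^{-1}(\mU_\sigma)\not\cong\mU_\sigma\times\mC$ as stacks. This is exactly the phenomenon of Remark~\ref{213} and Example~\ref{218} (the bundle $\mO_{\mP(2,1)}(-3)$), and it is precisely why the theorem is stated separately from the local splitting theorem rather than as a corollary of it.

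The paper's proof circumvents this by passing to the smooth Cox cover $Z_{\widehat\Sigma}$, where every Weil divisor is honestly Cartier and \cite[Proposition~7.3.1]{CLS11} applies directly to produce the line bundle $Z_{\widehat\Sigma'}\to Z_{\widehat\Sigma}$; one then checks that the $G_\beta$-action lifts (which amounts to your observation that $G_{\widetilde\beta}\cong G_\beta$), and finally identifies the resulting quotient stack with the fantastack $\mX_{\widetilde\Sigma,\widetilde\beta}$ via an explicit change of coordinates $\alpha$. Your alternative suggestion of computing $\pi_*\mO_{\mX_{\widetilde\Sigma,\widetilde\beta}}$ as a graded $\mO_{\mX_{\Sigma,\beta}}$-algebra and reading off the degree-one piece is in fact a viable route and is closer in spirit to the paper's descent argument, but you would still need to carry it out on the Cox cover (or equivalently work with $G_\beta$-graded modules) rather than on the charts $\mU_\sigma$, for the same reason.
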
 

Recall that the category of locally free sheaves on $[Z/G]$ is equivalent to that of $G$-linearized locally free sheaves on $Z$. Without considering the equivariant structure, these $G$-linearized invertible sheaves are all isomorphic to the trivial sheaf $\mO_Z$. By the construction of a toric stack, $G$ can be thought of as a subgroup of $(\mC^*)^n$. Each $g=(\lambda_1,...,\lambda_n) \in G$ induces an isomorphism $\mO_Z \to g^* \mO_Z$ sending $1$ to $\lambda_i$. The sheaf $\mO_{\Sigma,\beta}(\mD_{\rho_i})$ has a $G$-invariant global section $z_i$ such that $g^* z_i =\lambda_i z_i$. \cite{BH06}

\begin{proof}[Proof of Theorem 1.17]
	We will use the definition of stacky fan from \cite{GS15}.	
	
	Given a triple $(N,\Sigma,\beta:\mZ^n \to N)$, we can construct the corresponding fan $\widehat{\Sigma}$ in $\mZ^n$, which corresponds to a toric variety $Z_{\widehat{\Sigma}}$. Then by \cite{CLS11}, we can construct a new fan $\widehat{\Sigma}^\prime \in \mQ^n \times \mQ$. Given $\widehat{\sigma} \in \widehat{\Sigma}$, set 
	$\widehat{\sigma}^\prime=\text{Cone} \left((\vect{0}, 1), (e_i, -a_i) | e_i \in \widehat{\sigma} \right)$
	and let $\widehat{\Sigma}^\prime$ be the set consisting of cones $\widehat{\sigma}^\prime$ for all $\widehat{\sigma} \in \widehat{\Sigma}$ and their faces. By \cite[Proposition 7.3.1]{CLS11}, $\pi:  Z_{\widehat{\Sigma}^\prime} \to Z_{\widehat{\Sigma}} $ is a line bundle whose sheaf of sections is $\mO_{Z_{\widehat{\Sigma}}}(\sum_{i} a_i D_{e_i})$ where $D_{e_i}$ is the divisor corresponding to the ray generated by $e_i$ in $\widehat{\Sigma}$. 
	
	It suffices to show that the $G_\beta$-linearizion of this bundle exists and  the action of $G_\beta$ on $Z_\Sigma$ can be lifted . Define $\widehat{\beta}^\prime:\mZ^n \times \mZ \to N \times \mZ$ by the following matrices
	$$\begin{bmatrix}
	\beta(e_1) & \cdots & \beta(e_n) & \vect{0} \\
	0          & \cdots & 0          & 1       
	\end{bmatrix}.$$
	Then $G_{\widehat{\beta}^\prime} \cong G_\beta$ and its action on the line bundle is compatible with the action of $G_\beta$ on $Z_\Sigma$. The toric stack $\mX_{\widehat{\Sigma}^\prime, \widehat{\beta}^\prime}$ defined by the stacky fan $(\widehat{\Sigma}^\prime, \widehat{\beta}^\prime:\mZ^n \times \mZ \to N \times \mZ)$ induces the above line bundle.
	
	However, the rays of $\widehat{\Sigma}^\prime$ do not form a standard basis. Hence $\mX_{\widehat{\Sigma}^\prime, \widehat{\beta}^\prime}$ is not a fantastack and it is not a stacky fan defined in \hyperref[11]{Definition \ref*{11}}.
	
	Consider the morphism of stacky fans given by the following commutative diagram:
	$$  
	\begin{tikzcd}[column sep=large]
	\widetilde{\Sigma} \arrow{r}  & \widehat{\Sigma}^\prime 
	\end{tikzcd}$$
	\vspace{-1em}
	$$
	\begin{tikzcd}[column sep=large]
	\mZ^n \times \mZ \arrow{r}{\alpha}  \arrow{d}{\widetilde{\beta}:= \widehat{\beta}^\prime \circ \alpha} & \mZ^n \times \mZ  \arrow{d}{\widehat{\beta}^\prime}   \\  
	N \times \mZ \arrow{r}{\cong}  & N \times \mZ  
	\end{tikzcd}$$
	where $\alpha$ is defined by the matrix
	$$\begin{bmatrix}
	& I_n & & & \vect{0} \\
	-a_1 & -a_2 & \cdots & -a_n & 1
	\end{bmatrix}
	$$
	and $I_n$ is the $n \times n$ identity matrix.
	Let $\widetilde{\sigma}= \text{Cone} \left( e_i|\alpha(e_i) \in \widehat{\Sigma}^\prime \right)$. The morphism satisfies the conditions mentioned in \cite[Theorem B.3]{GS15}. Thus  $\mX_{\widetilde{\Sigma}, \widetilde{\beta}} \to  \mX_{\widehat{\Sigma}^\prime, \widehat{\beta}^\prime}$ is an isomorphism and $\mX_{\widetilde{\Sigma}, \widetilde{\beta}}$ is a fantastack. The matrix of $\widetilde{\beta}$ is given by
	$$\begin{bmatrix}
	\beta(e_1) & \cdots & \beta(e_n) & \vect{0} \\
	-a_1         & \cdots & -a_n          & 1     
	\end{bmatrix}.$$
\end{proof}
\begin{example}\label{218}
	Consider the morphism of stacky fans as follows:
	$$
	\begin{tikzpicture}
	\filldraw[black!30!white] (0,0) -- (0.7,0.7) -- (0,0.7);
	\filldraw[black!20!white] (0,0) -- (-1.4,0.7) -- (0,0.7);
	\draw[thick,->] (0,0) -- (0,0.7);
	\draw[thick,->] (0,0) -- (0.7,0.7);
	\draw[thick,->] (0,0) -- (-1.4,0.7);
	\node[right] at (0.7,0.4) {$(1,1)$};
	\node[right] at (-0.6,0.9) {$(0,1)$};
	\node[right] at (-2.8,0.4) {$(-2,1)$};
	\end{tikzpicture}
	\quad \xrightarrow{\text{projection}} \quad 
	\begin{tikzpicture}
	\draw[thick,->] (0,0) -- (-1.4,0);
	\draw[thick,->] (0,0) -- (0.7,0);
	\draw (0 cm,2pt) -- (0 cm,-2pt);
	\node[above] at (-1.2,0.1) {$-2$};
	\node[above] at (0.6,0.1) {$1$};
	\end{tikzpicture}	
	$$	
	Then $\phi: \mathcal{X}_{\Sigma,\beta} \to \mP(2,1)$ is a line bundle whose sheaf of sections is $\mO_{\mP(2,1)}(-3)$ and its fan is not locally split.
\end{example}

Again this theorem can be generalized to the case where $N$ is not free.

\subsection{Projective Bundles} ~\\

Consider the locally free sheave 
$\mathcal{E}=\mO_{\mX_{\Sigma,\beta}}(\mD_0) \oplus \cdots \oplus \mO_{\mX_{\Sigma,\beta}}(\mD_r)$
given by the cartier divisors $\mD_i=\sum_{j=1}^n a_{ij} \mD_{\rho_j}$ for $0 \leq i \leq r$, then $\mP(\mathcal{E}) \to \mX_{\Sigma,\beta}$ is a projective bundle.

Assume $N$ is free. Given a triple $(N,\Sigma,\beta:\mZ^n \to N)$, we define the new stacky fan $(N \times \mZ^r,\widetilde{\Sigma},\widetilde{\beta}:\mZ^{n+r+1} \to N \times \mZ^r)$ as follows:

\begin{enumerate} 
	\item $\widetilde{\beta} (e_j)=(\beta(e_j),a_{1j} - a_{0j}, \cdots , a_{rj} - a_{0j} )$ for $1 \leq j \leq n$.
	\item $\widetilde{\beta} (e_{n+1+i})=(\vect{0},e_i) \in N \times \mZ^r$ for $0 \leq i \leq r$, where $e_0=-e_1-...-e_r \in \mZ^r$.
	\item Given $\sigma \in \Sigma$, set 
	$\widetilde{\sigma}_i=\text{Cone} \left(\widetilde{\beta} (e_j) \otimes 1 | \beta(e_j) \otimes 1 \in \sigma(1) \right) + 
	\text{Cone} \left((\vect{0},e_0), ... ,  (\vect{0},e_{i-1}), \right. \allowbreak \left. (\vect{0},e_{i+1}),... , (\vect{0},e_r)  \right) $
	and let $\widetilde{\Sigma}$ be the set consisting of cones $\widetilde{\sigma}_i$ for all $\sigma \in \Sigma$, $1 \leq i \leq r$ and their faces.
\end{enumerate}
Then the natural projection of the fan $\widetilde{\Sigma}$ induces a toric morphism
$\pi: \mX_{\widetilde{\Sigma},\widetilde{\beta}} \to \mX_{\Sigma,\beta}. $

\begin{theorem}
	$\mX_{\widetilde{\Sigma},\widetilde{\beta}}$ is the projective bundle $\mP(\mathcal{E})$.
\end{theorem}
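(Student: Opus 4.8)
The plan is to reduce the statement to the corresponding classical fact about projective bundles over toric varieties, namely \cite[Section 7.3]{CLS11}, combined with the line-bundle construction of the previous theorem (Theorem 1.17). First I would construct, as in the paragraph preceding the statement, the fan $\widehat{\Sigma}$ in $\mZ^n$ associated to the triple $(N,\Sigma,\beta)$, together with the toric variety $Z_{\widehat{\Sigma}}$, and apply the Cox-Little-Schenck construction of $\mP(\widehat{\mathcal{E}})$ for $\widehat{\mathcal{E}}=\bigoplus_{i=0}^r \mO_{Z_{\widehat{\Sigma}}}(\sum_j a_{ij}D_{e_j})$. This produces an explicit fan $\widehat{\Sigma}'$ in $\mQ^n\times\mQ^r$ whose cones are exactly the $\widehat{\sigma}'_i$ obtained by copying the recipe in items (1)--(3) of the statement with $\beta$ replaced by the inclusion of $\widehat{\sigma}(1)$; by \cite[Prop. 7.3.3]{CLS11} (or its analogue), $Z_{\widehat{\Sigma}'}\to Z_{\widehat{\Sigma}}$ is the projective bundle $\mP(\widehat{\mathcal{E}})$. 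The reason the twists $a_{1j}-a_{0j},\dots,a_{rj}-a_{0j}$ appear rather than the $a_{ij}$ themselves is the usual normalization $\mP(\mathcal{E})=\mP(\mathcal{E}\otimes\mathcal{L})$; I would record this explicitly so the matrix of $\widetilde{\beta}$ in item (1) matches.

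Next I would promote this to the stacky setting exactly as in the proof of Theorem 1.17. One defines $\widehat{\beta}':\mZ^n\times\mZ^r\to N\times\mZ^r$ by the block matrix $\begin{bmatrix}\beta(e_1)&\cdots&\beta(e_n)&0\\ 0&\cdots&0&\mathrm{Id}\end{bmatrix}$ (with the $e_0$-column handled by the relation $\vect{e_0}=-\vect{e_1}-\cdots-\vect{e_r}$), checks that $G_{\widehat{\beta}'}\cong G_\beta$, and checks that the $G_\beta$-linearization of the projective bundle $\mP(\widehat{\mathcal{E}})$ over $Z_{\widehat{\Sigma}}$ exists and that the $G_\beta$-action lifts — this is where the splitting $\mathcal{E}=\bigoplus \mO(\mD_i)$ is essential, since each summand carries the canonical $G_\beta$-linearization $z\mapsto \prod\lambda_j^{a_{ij}}$ described in the paragraph before the proof of Theorem 1.17, and $\mP(\cdot)$ of a direct sum of linearized line bundles is canonically linearized. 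Hence $[\,Z_{\widehat{\Sigma}'}/G_{\widehat{\beta}'}\,]$ is a projective bundle over $\mX_{\Sigma,\beta}$ with the correct fiber, and it equals $\mP(\mathcal{E})$ because the equivalence between locally free sheaves on $[Z/G]$ and $G$-linearized locally free sheaves on $Z$ is compatible with forming $\mP(-)$.

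Finally I would identify $\mX_{\widetilde{\Sigma},\widetilde{\beta}}$ with $\mX_{\widehat{\Sigma}',\widehat{\beta}'}$. As in Theorem 1.17, the rays of $\widehat{\Sigma}'$ are not a standard basis, so $\mX_{\widehat{\Sigma}',\widehat{\beta}'}$ is not literally a fantastack; I would introduce the change-of-coordinates $\alpha:\mZ^n\times\mZ^r\to\mZ^n\times\mZ^r$ sending $e_j\mapsto e_j+\sum_i(a_{ij}-a_{0j})\,\mathrm{(appropriate\ coordinate)}$ and $e_{n+1+i}\mapsto e_{n+1+i}$, so that $\widehat{\beta}'\circ\alpha=\widetilde{\beta}$ and $\alpha$ carries the cones $\widetilde{\sigma}_i$ onto the $\widehat{\sigma}'_i$; then \cite[Theorem B.3]{GS15} gives the isomorphism $\mX_{\widetilde{\Sigma},\widetilde{\beta}}\cong\mX_{\widehat{\Sigma}',\widehat{\beta}'}$, and $\mX_{\widetilde{\Sigma},\widetilde{\beta}}$ is a fantastack. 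Combining the three steps yields $\mX_{\widetilde{\Sigma},\widetilde{\beta}}\cong\mP(\mathcal{E})$ over $\mX_{\Sigma,\beta}$.

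The main obstacle I expect is the bookkeeping in the second step: verifying carefully that the $G_\beta$-linearization on $\mP(\mathcal{E})$ is the one induced by the $\widehat{\beta}'$-construction and that the projection morphism $\widetilde{\Sigma}\to\Sigma$ really agrees, after the $\alpha$-twist, with the bundle projection — in particular that the fan-theoretic fiber over each cone $\sigma$ is the standard fan of $\mP^r$ in $\mZ^r$ and that the non-completeness of $\widehat{\Sigma}$ causes no trouble (as already noted in the proof of Theorem 1.17, the section $m_{\widehat{\sigma}}$ need not be unique, but the conclusion survives). Everything else is a direct transcription of the line-bundle case with $r$ extra coordinates.
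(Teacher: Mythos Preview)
Your proposal is correct and is essentially the approach the paper has in mind: the paper's own proof is the single line ``The proof is similar to \cite[Theorem 7.3.3]{CLS11},'' and what you have written is precisely the natural way to make that sentence honest --- repeat the argument of Theorem 1.17 (construct $\widehat{\Sigma}'$ via the classical projective-bundle fan in \cite{CLS11}, lift the $G_\beta$-linearization using the splitting of $\mathcal{E}$, then straighten the rays via an invertible $\alpha$ and invoke \cite[Theorem B.3]{GS15}). Your flagged bookkeeping issues (matching the linearization, non-uniqueness of $m_{\widehat{\sigma}}$) are exactly the ones already handled in the line-bundle case and cause no new difficulty here.
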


\begin{proof}
	The proof is similar to that of \cite[Theorem 7.3.3]{CLS11}.
\end{proof}

Suppose gcd$(a,b)=1$, then by \hyperref[16]{Propostion \ref*{16}}, the fan of $\mP(a,b)$ is given by $\beta(e_1)=b$ and $\beta(e_2)=-a$ . Suppose $r=sa+tb$, then consider  
$$\mathcal{E} =\mO_{\mP(a,b)} \oplus \mO_{\mP(a,b)}(s\mD_{e_1}+t\mD_{e_2})
$$
where $\mD_{e_i}$ is the divisor corresponding to the ray generated by $\beta(e_i)$.
Hence $\widetilde{\beta}:\mZ^4 \to \mZ^2$ is given by 

\begin{equation}\tag{1.3.1} \label{1.3.1}
\begin{array}{ll}
\widetilde{\beta} (e_1)=(b,s), &
\widetilde{\beta} (e_2)=(-a,t), \\
\widetilde{\beta} (e_3)=(0,-1), &
\widetilde{\beta} (e_4)=(0,1).
\end{array}
\end{equation}

If gcd$(a,b)=d\neq 1$ and $c_1 \frac{a}{d} + c_2 \frac{b}{d} \equiv 1 \mod d$, then by \hyperref[18]{Proposition \ref*{18}}, the fan of $\mP(a,b)$ is given by $\beta':\mZ^2 \to \mZ \oplus \mZ/d\mZ$ such that 
$$
\beta' (e_1)=(\frac{b}{d}, c_1 \text{ mod } d),  \quad	
\beta' (e_2)=(-\frac{a}{d}, c_2 \text{ mod } d).
$$
Suppose $d \mid r$ and $r=sa+tb$, then for  $\mathcal{E} =\mO_{\mP(a,b)} \oplus \mO_{\mP(a,b)}(s\mD_{e_1}+t\mD_{e_2})$, 
$\widetilde{\beta}:\mZ^4 \to \mZ^2 \oplus \mZ/d\mZ$ is given by 

\begin{equation}\tag{1.3.2} \label{1.3.2}
\begin{array}{ll}
\ds \widetilde{\beta} (e_1)=(\frac{b}{d}, s , c_1 \text{ mod } d),	&
\ds \widetilde{\beta} (e_2)=(-\frac{a}{d}, t , c_2 \text{ mod } d), \\
\widetilde{\beta} (e_3)=(0,-1,0), &
\widetilde{\beta} (e_4)=(0,1,0).
\end{array}
\end{equation}

\begin{definition} \label{2.24}
	The \textit{Hirzebruch stack} $\mathcal{H}_r^{ab}$ is defined as $$\mathcal{H}_r^{ab}=\mP(\mO_{\mP(a,b)} \oplus \mO_{\mP(a,b)}(r))$$
	and its fan is given by \eqref{1.3.1} when gcd$(a,b)=1$ and by \eqref{1.3.2} when gcd$(a,b)=d \neq 1$. 
\end{definition}

From now on, to simplify the notation, we assume gcd$(a,b)=1$\footnote{Our method still works without this assumption.}. In this case, the matrix for $\beta: \mZ^4 \to \mZ^2$ is given by 
\begin{equation} \tag{1.3.3} \label{1.3.3}
B=\begin{bmatrix}
b & 0 & -a & 0 \\
s & 1 & t & -1
\end{bmatrix}
\end{equation}
where $r=sa+bt$. The stacky fan can be drawn as in \hyperref[fig2]{Figure \ref*{fig2}} and $\mathcal{H}_r^{ab}$ is called the \textit{Hirzebruch orbifold}.

\begin{figure}
	\centering
	\begin{tikzpicture}
	\filldraw[black!25!white] (0,0) -- (0,2) -- (-2,2) -- (-2,1);
	\filldraw[black!50!white] (0,0) -- (3,2) -- (0,2);
	\filldraw[black!5!white] (0,0) -- (-2,1) -- (-2,-1) -- (0,-1);
	\filldraw[black!20!white] (0,0) -- (3,2) -- (3,-1) -- (0,-1);
	\draw[thin,->] (-2.5,0) -- (3.5,0) node[anchor=west] {x};
	\draw[thin,->] (0,-1) -- (0,2.5) node[anchor=south] {y};
	\draw[thick,->] (0,0) -- (3,2);
	\draw[thick,->] (0,0) -- (0,1);
	\draw[thick,->] (0,0) -- (0,-1);
	\draw[thick,->] (0,0) -- (-2,1);
	\node[right] at (3,2) {$\rho_1=(b,s)$};
	\node[left] at (-2,1) {$\rho_3=(-a,t)$};
	\node[above] at (0,1) {$\rho_2=(0,1)$};
	\node[below] at (0,-1) {$\rho_4=(0,-1)$};
	\node at (1.5,1.5) {$\sigma_1$};
	\node at (-1.5,1.5) {$\sigma_2$};
	\node at (-1.5,-0.5) {$\sigma_3$};
	\node at (1.5,-0.5) {$\sigma_4$};
	\end{tikzpicture} 
	\caption{} \label{fig2}
\end{figure} 

\section{Sheaves on Hirzebruch Orbifolds} 

The Hirzebruch orbifold can be covered by open substacks of the form $[\mC^2/H]$ where $H$ is a finite abelian group and the actions of $H$ and the torus $\mT \cong (\mC^*)^2$ commute. Hence, to describe a $\mT$-equivariant sheaf on the Hirzebruch Orbifold, we define it locally over each substack and then glue each part together.

Let the character group $X(\mT) \cong \mZ^2$ be written additively and $m_i$ be the basis dual to the generators of rays of $\rho_i$. For $m \in X(\mT)$, we denote by $\chi(m): \mT \to \mC^*$ the actual character viewed as a function. 

Let $\mT$ act linearly on $\mC^2$. The action is given by $t \cdot x_i = \chi(m_i)(t)(x_i)$. Given a $\mT$-equivariant sheaf $\mF$ on $[\mathbb{C}^2/H]$, the corresponding module can be decomposed into $X(\mT)$-graded weight spaces:
$$H^0(\mC^2,\mF)=\bigoplus_{m \in X(\mT)} F(m).$$
Suppose $H$ acts by $h \cdot x_i = \chi(n_i)(h)(x_i)$, then $F(m)$ can be further decomposed into $X(H)$-graded weight spaces:
$$F(m)=\bigoplus_{n \in X(H)} F(m)_n.$$

Hence the category of $\mT$-equivariant sheaves on $[\mathbb{C}^2/H]$, by \cite{GJK17}, is equivalent to the category of stacky $S$-families. A object $\hat{F}$ in this category consists of the following data:
\begin{itemize}
	\item A collection of vector spaces $\{F(m)_n\}_{m \in X(\mT), n\in X(H)}$. 
	\item A collection of linear maps 
	$$\{\chi_i(m):F(m) \rightarrow F(m+m_i)\}_{i=1,2, m \in X(\mT)}.$$ 
	induced by multiplication by $x_i$ satisfying 
	$$\chi_i(m):F(m)_n \rightarrow F(m+m_i)_{n+n_i}, \chi_j(m+m_i) \cdot \chi_i(m)=\chi_i(m+m_j) \cdot \chi_j(m)$$
	for $i,j=1,2$, $m \in X(\mT)$ and $n \in X(H)$. 
\end{itemize}

\subsection{Open Affine Covers} ~\\

Let $N_{\sigma_i}$ be the subgroup of $N \cong \mZ^2$ generated by the rays of $\sigma_i$ and $N(\sigma_i)$ be the quotient group $N/N_{\sigma_i}$. By \cite{BCS05}, each $2$-dimensional cone $\sigma_i$ defines an open substack $\mU_i \cong [\mC^2/N(\sigma_i)]$. One can show that
$$\mU_1 \cong \mU_4 \cong [\mC^2/(\mZ/b\mZ)], \quad \mU_2 \cong \mU_3 \cong [\mC^2/(\mZ/a\mZ)]$$
and they form an open cover of $\mathcal{H}_r^{ab}$.

The integer null space of the matrix $B$ \eqref{1.3.3} is spanned by $\begin{bmatrix}
a & 0 & b & r
\end{bmatrix}$ and $\begin{bmatrix}
0 & 1 & 0 & 1
\end{bmatrix}$. Hence $(\tau, \lambda) \in G_\beta \cong (\mC^*)^2$ acts on $Z_\Sigma=\text{Spec }\mC[x,y,z,w] - V(xy,yz,zw,wx)$ by 
$$(\tau, \lambda) : (x,y,z,w) \to (\tau^a x, \lambda y, \tau^b z, \tau^r \lambda w)
$$
and $\mathcal{H}_r^{ab}=[Z_\Sigma/G_\beta]$.

Let $\beta_1$ be the morphism given by the first two columns of the matrix $B$. It induces a stacky fan with two rays and the corresponding toric stack $[Z_1/G_1]$ is exactly $ [\mC^2/(\mZ/b\mZ)]$. Consider the open subvariety $U_1$ of $Z_\Sigma$ defined as the complement of the vanishing locus of the monomial $zw$. There is a natural closed embedding $\phi_1: Z_1 \to U_1$ given by 
$$\phi_1(Z_1) = \mC^2 \times \vect{1} = \{(x,y,1,1)\} \in \mC^2 \times (\mC^*)^2 \cong U_1.$$

By \cite{BCS05}, an element $g \in G_\beta$ belongs to $G_1$ if and only if $\phi_1(Z_1) \cdot g \cap \phi_1(Z_1) \neq \emptyset$. In this case,
$$\tau^b=1, \tau^r \lambda=1 \Longrightarrow \lambda=\tau^{-r}.$$
Let $\mu_b$ be the group of $b$th roots of unity, then 
$$\mU_1 \cong [\mC^2/\mu_b], \quad  \tau \in \mu_b: (x,y) \to (\tau^a x, \tau^{-r} y).$$ 
Similarly, one can show that 
$$\begin{array}{ll}
\mU_2 \cong [\mC^2/\mu_a],  & \tau \in \mu_a: (y,z) \to (\tau^{-r} y, \tau^b z ),\\
\mU_3 \cong [\mC^2/\mu_a], & \tau\in \mu_a : (z,w)  \to (\tau^b z, \tau^r w),\\
\mU_4 \cong [\mC^2/\mu_b], & \tau\in \mu_b: (w,x) \to (\tau^r w, \tau^a x).
\end{array}$$

Consider the morphism $\widetilde{\phi_i}: \mU_i \hookrightarrow \mathcal{H}_r^{ab}$ induced by $ Z_i \xrightarrow{\phi_i} U_i \hookrightarrow Z_\Sigma$. We can compute stack theoretic intersections via the fiber product of $U_i$ and $U_j$ over $\mathcal{H}_r^{ab}$.

$$\begin{tikzcd}[column sep=large]
\mU_{12} := \mU_1 \times_{\mathcal{H}_r^{ab}} \mU_2 \arrow{r}  \arrow{d} & \left[\text{spec } \mC[x,y]/\mu_b\right]  \arrow{d}{\widetilde{\phi_1}}   \\  
\left[\text{spec } \mC[y,z]/\mu_a\right]  \arrow{r}{\widetilde{\phi_2}}  & \mathcal{H}_r^{ab} 
\end{tikzcd}$$
By calculating the fiber product of the corresponding groupoids \cite{ALR07}, one can show that
$$\mU_{12} \cong [\mC \times \mC^* / \mu_b \times \mu_a], \quad
(\mu,\nu) \in \mu_b \times \mu_a: (y,\tau) \to (\mu^{-r}y, \nu \mu^{-1} \tau).$$

Similarly, the fiber products of other open substacks are given as follows:
	$$\begin{array}{ll}
\mU_{23}   \cong [\mC   \times   \mC^*   \times   \mu_a / \mu_a   \times   \mu_a], 
&  (\mu,\nu) \in \mu_a   \times   \mu_a: (z, \lambda, \tau)   \to   (\mu^b z, \mu^r \lambda , \nu \mu^{-1} \tau).\\
\mU_{34} \cong [\mC \times \mC^* / \mu_a   \times   \mu_b], 
&   (\mu,\nu) \in \mu_a   \times   \mu_b: (w,\tau) \to (\mu^r w, \nu \mu^{-1} \tau).\\
\mU_{41}   \cong [\mC   \times   \mC^*   \times   \mu_b / \mu_b   \times   \mu_b], 
&   (\mu,\nu) \in \mu_b   \times   \mu_b: (x, \lambda, \tau)   \to   (\mu^a x, \nu^{-r} \lambda , \nu \mu^{-1} \tau).
\end{array}$$
Actually $\mU_{23}$ can be further simplified. Consider the groupoid morphism 
$$(\psi_1 \times \psi_0 , \psi_0) : (\mu_a \times \mC \times \mC^* \, \substack{\rightarrow\\[-1em] \rightarrow} \, \mC \times \mC^*) \longrightarrow (\mu_a \times \mu_a \times \mC \times \mC^* \times \mu_a \, \substack{\rightarrow\\[-1em] \rightarrow} \, \mC \times \mC^* \times \mu_a)  $$
defined by 	$$\psi_1(\mu) = (\mu, \mu), \quad \psi_0 ( z , \lambda) = (z , \lambda , 1).$$
One can show that it is a Morita equivalence and hence  
$$\mU_{23} \cong [\mC \times \mC^* / \mu_a ], \quad  \mu \in \mu_a: (z, \lambda) \to (\mu^b z, \mu^r \lambda).$$
Similarly,
$$\mU_{41} \cong [\mC \times \mC^* / \mu_b ], \quad  \mu \in \mu_b: (x, \lambda) \to (\mu^a x, \mu^{-r} \lambda).$$

The open immersions $\widetilde{\phi}_{ij}: \mU_{ij} = [Z_{ij}/G_{ij}] \hookrightarrow \mU_i= [Z_i/G_i]$ and $\widetilde{\phi}_{ji}: \mU_{ij} \hookrightarrow  \mU_j$ are induced from $\phi_{ij}: Z_{ij} \to Z_i$ and $\phi_{ji}: Z_{ji}=Z_{ij} \to Z_j$.
$$\begin{array}{ll}
\phi_{12}: (y,\tau)   \to (\tau^{-a},y)  \quad &              \phi_{21}: (y,\tau)   \to (y \tau^{-r}, \tau^b).  \\
\phi_{23}: (z, \lambda)  \to (\lambda^{-1}, z)  \quad & \phi_{32}: (z, \lambda)  \to (z, \lambda ).  \\
\phi_{34}: (w,\tau)  \to (\tau^{-b}, w)  \quad &                \phi_{43}: (w,\tau)  \to (\tau^{r} w,  \tau^a). \\
\phi_{41}: (x, \lambda)  \to (\lambda^{-1}, x)  \quad & \phi_{14}: (x, \lambda)  \to (x, \lambda ).  
\end{array}$$

To find $X(T)$-grading on each open substack $\mU_i$, we need to determine how the torus $\mT$ is embedded in $\mathcal{H}_r^{ab}$. One can show that
$$\begin{array}{c}
\mU_{1234} := \mU_{12} \times_{\mathcal{H}_r^{ab}} \mU_{34} \cong [(\mC^*)^2/\mu_b \times \mu_a] \\
(\mu,\mu') \in \mu_b \times \mu_a : (\alpha,\beta) \to (\mu (\mu')^{-1} \alpha, \mu^{-r} \beta ).
\end{array} $$
Hence if gcd$(a,b)=1$, then $\mU_{1234} \cong (\mC^*)^2$.
Suppose $(\mC^*)^2$ acts on itself by multiplication, then we can extend this action to the orbifold $\mH_r^{ab}$ by requiring all the open immersions to be $\mT$-equivariant.

For example, from the following commutative diagram
$$\begin{tikzcd}[row sep=large]
Z_{1234} \arrow{r} &  Z_{12} \cong \mC \times \mC^*  \arrow{r} \arrow[rr, bend left=30, looseness=0.2] & Z_1=\text{spec } \mC[x,y]  & Z_2=\text{spec } \mC[y,z] \\ [-15pt]
(\alpha, \beta)	 \arrow{r}  \arrow{d}{(0,1)}[swap]{(1,0)}        & (\beta ,\alpha^{-1}) \arrow{r}  \arrow{d}{(-1,0)}[swap]{(0,1)}  \arrow[rr, bend left=30, looseness=0.3] & (\alpha^a, \beta) \arrow{d}{(0,1)}[swap]{(a,0)}  &    (\beta \alpha^r, \alpha^{-b}) \arrow{d}{(-b,0)}[swap]{(r,1)}  \\ [-5pt] 
(t_1 \alpha, t_2 \beta)	\arrow{r}        & (t_2 \beta, t_1^{-1}\alpha^{-1} )   \arrow{r} \arrow[rr, bend right=30, looseness=0.3] & (t_1^a \alpha^a, t_2 y) &   (t_2 \beta t_1^r \alpha^r , t_1^{-b} \alpha^{-b})
\end{tikzcd}$$
we see that $\mT$-weights are $(0,1)$ and $(-1,0)$ on $Z_{12}$,  $(a,0)$ and $(0,1)$ on $Z_1$, $(r,1)$ and $(-b,0)$ on $Z_2$.

Similarly, one can show that $\mT$-weights are given by the following tables:
$$\begin{tabular}{|c|c|} 
\hline
& $\mT$-weights on $Z_i$ \\ \hline
$\mU_1$ & $(a,0),(0,1)$\\\hline
$\mU_2$ & $(r,1), (-b,0)$\\\hline
$\mU_3$ & $(-b,0),(-r,-1)$\\\hline
$\mU_4$ & $(0,-1),(a,0)$ \\ \hline
\end{tabular}
\hspace{50pt}
\begin{tabular}{|c|c|}
\hline
& $\mT$-weights on $Z_{ij}$ \\ \hline
$\mU_{12}$ & $(0,1),(-1,0)$\\\hline
$\mU_{23}$ & $(-b,0), (-r,-1) $\\\hline
$\mU_{34}$ & $(-r,-1),(1,0)$\\\hline
$\mU_{41}$ & $(a,0), (0,1)$ \\ \hline
\end{tabular}
$$

\subsection{Gluing Conditions} ~\\

To describe $\mT$-equivariant  torsion free sheaves on $\mH_r^{ab}$, we first determine the stacky $S$-family $\hat{F}_i$ of the sheaf $\mF_i$ on each open $\mU_i$. Then we pull back those families to the intersection $\mU_{ij}$ and match them for all $i,j$. This allows us to glue those sheaves $\mF_i$ to get a sheaf $\mF$ on $\mathcal{H}_r^{ab}$. Note that this gluing approach follows closely the work of \cite{GJK17}.

Let's first compute the family $\hat{F}_{1,12}$, which is the pullback of $\hat{F}_1$. 

Given a torus action $t \cdot x_i = \chi(m_i)(t)(x_i)$, the associated box $\mB_\mT$ \cite{GJK17} is defined as the subset of $X(\mT)$ of elements of the form $\sum_i q_i m_i$ with $0 \leq q_i < 1$. By the above table, the $\mT$-weights on $U_1$ are $(a,0)$ and $(0,1)$. Hence $q_1=\frac{k}{a}$ for $0 \leq k \leq a-1$ and $q_2=0$. Note that the box $\mB_\mT$ of $\mU_1$ can also be viewed as $[0,a-1] \times 0$ and the size of this box is $a$. 

For the stacky $S$-family $\hat{F}_1$, denote by 
$$ _{(k/a, 0)} F_1(l_1,l_2)$$
the vector space whose $\mT$-weight is $(k/a+l_1)(a,0)+(0+l_2)(0,1)$.

Consider the inclusion $\mU_{12} \hookrightarrow \mU_1$ induced from 
$$
\mC \times \mC^* \to \mC^2, \quad
\phi_{12}: (y,\tau)   \to (\tau^{-a}, y ) = (x,y). 
$$ 
We first restrict the sheaf $\mF_1$ to $\text{Im}(\phi_{12}) \cong \mC^* \times \mC$  and then pull it back along the morphism $\phi_{12}$. 

The sheaf $\mF$ is torsion free, hence the vector spaces $_{(k/a, 0)} F_1(l_1,l_2)$ stabilize for $l_1 \gg 0$, $l_2$ fixed. It means that they are isomorphic for $l_1 \gg 0$ \cite{Koo11}. We denote this limit by 
$$_{(k/a, 0)} F_1(\infty,l_2).$$
The sheaf $\mF_1|_{\mC^* \times \mC}$ corresponds to a $S$-family $\hat{G}_1$ and
$$_{(k/a, 0)} G(l_1,l_2) = \, _{(k/a, 0)} F_1(\infty,l_2)$$
is independent of $l_1$ because $G_1$ is a $\mC[x^{\pm}, y]$-module and multiplication by $x$ induces an isomorphism of vector spaces.

Pulling back the family $\hat{G_1}$ to $Z_{12}$ along the \'etale morphism $\phi_{12}$, we get a $\mC[\tau^{\pm}, y]$-module. An element of $\hat{F}_{1,12}$ at the weight $(k/a+l_1)(a,0)+(0+l_2)(0,1)$ can be uniquely written as
$$ \bigoplus_{0 \leq k' \leq a-1} v_{k'} \otimes \tau^{k'-k} $$ 
where $v_{k'} \in \, _{(k'/a, 0)} G_1(l_1,l_2)$, since the $\mT$-weight of $\tau$ is $(-1,0)$ on $U_{12}$.

Next, we set the fine-grading on the limit space $_{(k/a, 0)} F_1(\infty,l_2)$ by 
$$_{(k/a, 0)} F_1(\infty,l_2)_m = \,  _{(k/a, 0)} G_1(0,l_2)_m.$$
Thus the fine-grading of $S$-family $\hat{G_1}$ for any $l_1$ will be  
$$_{(k/a, 0)} G_1(l_1,l_2)_m= \, _{(k/a, 0)} G_1(0,l_2)_{m-al_1} \otimes \hat{\mu}_b^{al_1}.$$
Here  $\otimes \hat{\mu}_b$ means tensoring with the $1$-dimensional representation of the group $\mu_b$ of weight $1 \in \mZ/b\mZ$.

Since the $\mu_b \times \mu_a$-weight of $\tau$ is $(-1,1)$ on $\mU_{12}$,  the $S$-family of $\hat{F}_{1,12}$ at the $\mT$-weight $(k/a+l_1)(a,0)+(0+l_2)(0,1)$
with the fine grading is 
$$\begin{array}{ll}
&\ds \bigoplus_{\substack{ 0 \leq k' \leq a-1 \\ 
		m \in \mathbb{Z}/b\mathbb{Z}   }} \, _{(k'/a, 0)} G_1(l_1, l_2)_m \otimes \hat{\mu}_b^{k-k'} \otimes \hat{\mu}_a^{k'-k} \\[2em]
=  &  \ds \bigoplus_{\substack{ 0 \leq k' \leq a-1 \\
		m \in \mathbb{Z}/b\mathbb{Z} }}  \,  _{(k'/a, 0)} F_1(\infty, l_2)_m \otimes \hat{\mu}_b^{k-k'+al_1} \otimes \hat{\mu}_a^{k'-k}.
\end{array}
$$ 
Similarly, one can show that the $S$-family of $\hat{F}_{2,12}$ at the $\mT$-weight $(0+l_1)(r,-r)+(j/b+l_2)(-b,0)$
is 
$$ \bigoplus_{\substack{0 \leq j' \leq b-1\\
		n \in \mathbb{Z}/a\mathbb{Z}  }}
\,  _{(0, j'/b)} F_2(l_1, \infty)_n \otimes \hat{\mu}_a^{j-j'} \otimes \hat{\mu}_b^{j'-j+bl_2}.
$$ 
Since multiplication by $\tau$ is an isomorphism, the $S$-family $\hat{F}_{1,12}$ is determined by its elements at the weight $(0/a+0)(a,0)+(0+l)(0,1)=(0, l)$ for all $l \in \mZ$.
Therefore it suffices to compute the $S$-family $\hat{F}_{1,12}$ at the above weight, which is given by
\[\bigoplus_{\substack{0 \leq k' \leq a-1\\
		m \in \mZ/b\mZ	} }
\,  _{(k'/a, 0)} F_1(\infty, l_2)_m \otimes \hat{\mu}_b^{-k'} \otimes \hat{\mu}_a^{k'}.\] 
Similarly, we only compute the $S$-family $\hat{F}_{2,12}$ at the weight $(0+l)(r,1)+(0/b+0)(-b,0) =(lr , l)$,
which is given by 
$$\bigoplus_{\substack{0 \leq j' \leq b-1\\
		n \in \mZ/a\mZ	} } \, 
_{(0, j'/b)} F_2(l,\infty)_n \otimes \hat{\mu}_a^{-j'} \otimes \hat{\mu}_b^{j'}.$$ 
We can't equate them since they are at different weights. To jump from the weight $(lr , l)$ to $ (0, l)$, we multiply the second family by $\tau^{lr}$ as the $\mT$-weight of $\tau$ is $(-1,0)$. As a result, the fine grading is changed to 
$$\bigoplus_{\substack{0 \leq j' \leq b-1\\
		n \in \mZ/a\mZ}}
\,  _{(0, j'/b)} F_2(l,\infty)_n \otimes \hat{\mu}_a^{-j'+lr} \otimes \hat{\mu}_b^{j'-lr}.$$ 
Hence the gluing conditions on the substack $\mU_{12}$ are given by: 
$$\bigoplus_{\substack{ 0 \leq k \leq a-1 \\ m \in \mZ/b\mZ} }  \,  _{(k/a, 0)} F_1(\infty, l)_m \otimes \hat{\mu}_b^{-k} \otimes \hat{\mu}_a^{k} \cong  \bigoplus_{ \substack{0 \leq j \leq b-1 \\ n \in \mZ/a\mZ} }  \,  _{(0, j/b)} F_2(l,\infty)_n \otimes \hat{\mu}_a^{-j+lr} \otimes \hat{\mu}_b^{j-lr}$$ 
for all $l \in \mZ$. Here  $\otimes \hat{\mu}_b^k$ means tensoring with the $1$-dimensional representation of the group $\mu_b$ of weight $k \in \mZ/b\mZ$ and $\otimes \hat{\mu}_a^j$ means tensoring with the $1$-dimensional representation of the group $\mu_a$ of weight $j \in \mZ/a\mZ$. 

Similarly, we can get gluing conditions for other substacks. 

\begin{proposition}
	The category of $\mT$-equivariant torsion free sheaves on the Hirzebruch orbifold $\mathcal{H}_r^{ab}$ is equivalent to the category of finite \cite[Definition 5.10]{Per04} stacky $S$-families $\{ \hat{F}_i \}_{i=1,2,3,4}$ on $\mU_i$ satisfying the gluing conditions given by the following equalities of $\mu_a \times \mu_b$ representations:
	$$
	\begin{array}{rl}
	 \ds \bigoplus_{ \substack{0 \leq k \leq a-1\\
			m \in \mZ/b\mZ}} 
	\,  _{(k/a, 0)} F_1(\infty, l)_m \otimes \hat{\mu}_b^{-k} \otimes \hat{\mu}_a^{k} &   \cong   
	\ds \bigoplus_{ \substack{0 \leq j \leq b-1 \\
			n \in \mZ/a\mZ} }  
	\,  _{(0, j/b)} F_2(l,\infty)_n \otimes \hat{\mu}_a^{-j+lr} \otimes \hat{\mu}_b^{j-lr} \\[2.2em]
	
	\ds \bigoplus_{m \in \mZ/a\mZ}  \,  \: \:   _{(0, j'/b)} F_2(\infty, l)_m  &        \cong  \: \: \: \ds \bigoplus_{n \in \mZ/a\mZ}  \, \: \: \:   _{(j'/b, 0)} F_3(l, \infty)_n  \\[1.5em]
	
	 \ds \bigoplus_{ \substack{0 \leq j \leq b-1\\
			m \in \mZ/a\mZ} }  
	\,  _{(j/b, 0)} F_3(\infty, l)_m \otimes \hat{\mu}_a^{-j} \otimes \hat{\mu}_b^{j} &       \cong
	\ds  \bigoplus_{ \substack{0 \leq k \leq a-1 \\
			n \in \mZ/b\mZ} }  
	\,  _{(0, k/a)} F_4(l,\infty)_n \otimes \hat{\mu}_b^{-k-lr} \otimes \hat{\mu}_a^{k+lr} \\[2.2em]
	
	\ds \bigoplus_{m \in \mZ/b\mZ}  \,  \: \:   _{(0, k'/a)} F_4(\infty, l)_m &         \cong \: \: \:  \ds \bigoplus_{n \in \mZ/b\mZ}   \,   \: \: \:  _{(k'/a, 0)} F_1(l, \infty)_n  
\end{array}
$$
for all $l \in \mZ$, $j' \in \mZ/a\mZ$, $k' \in \mZ/b\mZ$ and similar gluing conditions between the corresponding inclusions.
\end{proposition}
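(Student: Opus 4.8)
The plan is to assemble the global equivalence out of the local equivalence of \cite{GJK12} together with descent along the open cover $\{\mU_i\}_{i=1,\dots,4}$ of $\mH_r^{ab}$. First I would recall that a $\mT$-equivariant quasi-coherent sheaf $\mF$ on $\mH_r^{ab}$ is the same data as a collection of $\mT$-equivariant sheaves $\mF_i := \mF|_{\mU_i}$ together with isomorphisms $\theta_{ij} : \mF_i|_{\mU_{ij}} \xrightarrow{\sim} \mF_j|_{\mU_{ij}}$ satisfying the cocycle condition on the triple overlaps $\mU_{ijk}$. Because the nerve of the cover is a $4$-cycle ($\mU_1,\mU_2,\mU_3,\mU_4$ with consecutive intersections only), the only nonempty triple intersections are the quadruple overlap $\mU_{1234}$ and its degenerations, so the cocycle condition reduces to a single compatibility over $\mU_{1234}\cong(\mC^*)^2$ (since $\gcd(a,b)=1$), where every sheaf is free and the gluing is automatic after one normalization. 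This is exactly the content encoded in the four equalities of $S$-families displayed in the statement.

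Next I would make the translation to $S$-families precise. Each $\mU_i$ is of the form $[\mC^2/H_i]$ with $H_i$ a cyclic group ($\mu_b$ for $i=1,4$, $\mu_a$ for $i=2,3$), so by \cite{GJK12} the category of $\mT$-equivariant torsion free sheaves on $\mU_i$ is equivalent to the category of finite stacky $S$-families $\hat F_i$, with the torsion-free condition encoded by the stabilization of $_{(i/a,0)}F_1(l_1,l_2)$ as $l_1\gg 0$ (and the analogous stabilizations on the other charts) and injectivity of the maps $\chi_1,\chi_2$. Then I would invoke the computation carried out in Section 2.2 of the pullback families $\hat F_{i,ij}$ and $\hat F_{j,ij}$ on $\mU_{ij}$: the chart-theoretic intersections were identified above as $\mU_{12}\cong[\mC\times\mC^*/\mu_b\times\mu_a]$, $\mU_{23}\cong[\mC\times\mC^*/\mu_a]$, $\mU_{34}\cong[\mC\times\mC^*/\mu_a\times\mu_b]$, $\mU_{41}\cong[\mC\times\mC^*/\mu_b]$, together with the explicit \'etale maps $\phi_{ij}$ and the $\mT$-weight tables. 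An isomorphism $\theta_{ij}$ of sheaves on $\mU_{ij}$ is exactly an isomorphism of the corresponding $S$-families, and since multiplication by the unit $\tau$ (resp. $\lambda$) is invertible on $\mU_{ij}$, such an $S$-family is determined by its value at one chosen $\mT$-weight for each $l\in\mZ$; the explicit reindexing (shift by $\tau^{lr}$ to pass from weight $(lr,l)$ to $(0,l)$, and the corresponding $\hat\mu_a,\hat\mu_b$ twists coming from the $\mu_a\times\mu_b$-weights of the units) produces precisely the four displayed identities. The "similar gluing conditions between the linear maps" clause is handled by carrying the maps $\chi_1,\chi_2$ along the same pullbacks and recording that each $\theta_{ij}$ must intertwine them.

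Having set up both sides, I would prove the equivalence of categories in the usual two directions. Given $\mF$, restrict to each $\mU_i$, apply \cite{GJK12} to get $\hat F_i$, and the restriction isomorphisms $\mF_i|_{\mU_{ij}}\cong\mF_j|_{\mU_{ij}}$ give the four $S$-family identities above; functoriality is clear. Conversely, given $\{\hat F_i\}$ satisfying the gluing conditions, build $\mF_i$ on $\mU_i$ via \cite{GJK12}, manufacture the isomorphisms $\theta_{ij}$ on $\mU_{ij}$ out of the identities, check the cocycle condition on $\mU_{1234}$ (automatic, as noted, because the sheaves are free there and there is a unique compatible normalization once the four pairwise identifications are fixed), and descend to a $\mT$-equivariant sheaf $\mF$ on $\mH_r^{ab}$ by fppf (here even Zariski-on-the-cover) descent for quasi-coherent sheaves on Deligne--Mumford stacks; torsion-freeness of $\mF$ is local, hence follows from torsion-freeness of each $\mF_i$, which is the finiteness/stabilization condition on the $S$-families. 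The two constructions are mutually quasi-inverse by construction. The main obstacle is purely bookkeeping rather than conceptual: keeping the $\mT$-grading shifts and the $\hat\mu_a$, $\hat\mu_b$ fine-grading twists consistent across all four overlaps simultaneously — in particular verifying that the twist exponents appearing in the four displayed equalities (e.g. $\hat\mu_a^{-j+lr}\otimes\hat\mu_b^{j-lr}$ on $\mU_{12}$ versus the $\pm lr$-free identity on $\mU_{23}$) are exactly what is forced by the weight tables for $Z_i$ and $Z_{ij}$, so that the cocycle condition on $\mU_{1234}$ closes up; this is where one must be careful that $r=sa+bt$ enters correctly and that $\gcd(a,b)=1$ is used to collapse $\mU_{1234}$ to a genuine torus.
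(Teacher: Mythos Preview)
Your proposal is correct and follows essentially the same approach as the paper: the paper's argument (given in the text of Section~2.2 preceding the proposition rather than as a separate proof) derives the gluing conditions by restricting to each $\mU_i$, invoking the local $S$-family description of \cite{GJK12}, pulling back to the overlaps $\mU_{ij}$ via the explicit maps $\phi_{ij}$, and matching the resulting families at a single $\mT$-weight per $l\in\mZ$ (the $\mU_{12}$ case is worked out in full and the others are declared ``similar''). Your write-up is in fact more explicit than the paper about the categorical framework (descent, cocycle condition on $\mU_{1234}$, the two mutually inverse functors), points which the paper leaves implicit.
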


\subsection{Examples} ~\\

In this section, we will give some examples of torsion free toric sheaves of rank $1$ and $2$ on $\mH_r^{ab}$.

\begin{example}
	Let $F$ be a  torsion free sheaf of rank $1$ on the Hirzebruch surface $\mH_r^{11}$. Then the gluing conditions are
	$$\begin{array}{ll}
	_{(0, 0)} F_1(\infty, l)  = \,  _{(0, 0)} F_2(l,\infty),  &
	_{(0, 0)} F_2(\infty, l)   = \,  _{(0, 0)} F_3(l, \infty),  \\
	_{(0, 0)} F_3(\infty, l)    =   \,  _{(0, 0)} F_4(l, \infty),  &
	_{(0, 0)} F_4(\infty, l)  =  \,  _{(0, 0)} F_1(l, \infty).
	\end{array}
	$$
	On each chart, $_{(0, 0)} \hat{F}_i$ can be described as follows:
	
	\begin{displaymath}
	\xy
	(0,0)*{} ; (30,0)*{} **\dir{} ; (0,5)*{} ; (30,5)*{} **\dir{.} ; (0,10)*{} ; (30,10)*{} **\dir{.} ; (0,15)*{} ; (30,15)*{} **\dir{.} ; (0,20)*{} ; (30,20)*{} **\dir{.} ; (0,25)*{} ; (30,25)*{} **\dir{.} ; (0,30)*{} ; (30,30)*{} **\dir{} ;                   (0,0)*{} ; (0,30)*{} **\dir{} ; (5,0)*{} ; (5,30)*{} **\dir{.} ; (10,0)*{} ; (10,30)*{} **\dir{.} ; (15,0)*{} ; (15,30)*{} **\dir{.} ; (20,0)*{} ; (20,30)*{} **\dir{.} ; (25,0)*{} ; (25,30)*{} **\dir{.} ; (30,0)*{} ; (30,30)*{} **\dir{} ; (5,30)*{} ; (5,15)*{} **\dir{-} ;  (5,15)*{} ; (20,15)*{} **\dir{-} ; (20,15)*{} ; (20,5)*{} **\dir{-} ; (20,5)*{} ; (30,5)*{} **\dir{-} ;  (-5,15)*{\hat{F}_1} ; (5,5)*{\bullet} ; (0,0)*{(A_1,A_2)} ;  (25,25)*{\mathbb{C}} ;                                                                                                        ; (50,0)*{} ; (80,0)*{} **\dir{} ; (50,5)*{} ; (80,5)*{} **\dir{.} ; (50,10)*{} ; (80,10)*{} **\dir{.} ; (50,15)*{} ; (80,15)*{} **\dir{.} ; (50,20)*{} ; (80,20)*{} **\dir{.} ; (50,25)*{} ; (80,25)*{} **\dir{.} ; (50,30)*{} ; (80,30)*{} **\dir{} ;                   (50,0)*{} ; (50,30)*{} **\dir{} ; (55,0)*{} ; (55,30)*{} **\dir{.} ; (60,0)*{} ; (60,30)*{} **\dir{.} ; (65,0)*{} ; (65,30)*{} **\dir{.} ; (70,0)*{} ; (70,30)*{} **\dir{.} ; (75,0)*{} ; (75,30)*{} **\dir{.} ; (80,0)*{} ; (80,30)*{} **\dir{} ;  (55,30)*{} ; (55,25)*{} **\dir{-} ;  (55,25)*{} ; (65,25)*{} **\dir{-} ; (65,25)*{} ; (65,15)*{} **\dir{-} ; (65,15)*{} ; (70,15)*{} **\dir{-} ; (70,15)*{} ; (70,5)*{} **\dir{-} ; (70,5)*{} ; (80,5)*{} **\dir{-} ;  (45,15)*{\hat{F}_2} ; (55,5)*{\bullet} ; (50,0)*{(A_2,A_3)} ;  (75,25)*{\mathbb{C}} ;                            
	\endxy 
	\end{displaymath}
	\begin{displaymath}
	\xy
	(0,0)*{} ; (30,0)*{} **\dir{} ; (0,5)*{} ; (30,5)*{} **\dir{.} ; (0,10)*{} ; (30,10)*{} **\dir{.} ; (0,15)*{} ; (30,15)*{} **\dir{.} ; (0,20)*{} ; (30,20)*{} **\dir{.} ; (0,25)*{} ; (30,25)*{} **\dir{.} ; (0,30)*{} ; (30,30)*{} **\dir{} ;                   (0,0)*{} ; (0,30)*{} **\dir{} ; (5,0)*{} ; (5,30)*{} **\dir{.} ; (10,0)*{} ; (10,30)*{} **\dir{.} ; (15,0)*{} ; (15,30)*{} **\dir{.} ; (20,0)*{} ; (20,30)*{} **\dir{.} ; (25,0)*{} ; (25,30)*{} **\dir{.} ; (30,0)*{} ; (30,30)*{} **\dir{} ; (5,30)*{} ; (5,20)*{} **\dir{-} ;  (5,20)*{} ;  (10,20)*{} **\dir{-} ; (10,20)*{} ; (15,20)*{} **\dir{-} ; (15,20)*{} ; (15,10)*{} **\dir{-} ; (15,10)*{} ; (20,10)*{} **\dir{-} ; (20,10)*{} ; (20,5)*{} **\dir{-} ; (20,5)*{} ; (30,5)*{} **\dir{-} ;  (-5,15)*{\hat{F}_3} ; (5,5)*{\bullet} ; (0,0)*{(A_3,A_4)} ;  (25,25)*{\mathbb{C}} ;                                                                                                        ; (50,0)*{} ; (80,0)*{} **\dir{} ; (50,5)*{} ; (80,5)*{} **\dir{.} ; (50,10)*{} ; (80,10)*{} **\dir{.} ; (50,15)*{} ; (80,15)*{} **\dir{.} ; (50,20)*{} ; (80,20)*{} **\dir{.} ; (50,25)*{} ; (80,25)*{} **\dir{.} ; (50,30)*{} ; (80,30)*{} **\dir{} ;                   (50,0)*{} ; (50,30)*{} **\dir{} ; (55,0)*{} ; (55,30)*{} **\dir{.} ; (60,0)*{} ; (60,30)*{} **\dir{.} ; (65,0)*{} ; (65,30)*{} **\dir{.} ; (70,0)*{} ; (70,30)*{} **\dir{.} ; (75,0)*{} ; (75,30)*{} **\dir{.} ; (80,0)*{} ; (80,30)*{} **\dir{} ;  (55,30)*{} ; (55,15)*{} **\dir{-} ;  (55,15)*{} ; (65,15)*{} **\dir{-} ; (65,15)*{} ; (65,10)*{} **\dir{-} ; (65,10)*{} ; (75,10)*{} **\dir{-} ; (75,10)*{} ; (75,5)*{} **\dir{-} ; (75,5)*{} ; (80,5)*{} **\dir{-} ;  (45,15)*{\hat{F}_4} ; (55,5)*{\bullet} ; (50,0)*{(A_4,A_1)} ;  (75,25)*{\mathbb{C}} ;                                                                                                                                                                                                                                                                                                                                                              
	\endxy 
	\end{displaymath}
\end{example}

\begin{example}
	Let $\mF$ be a locally free toric sheaf of rank $1$ on the Hirzebruch orbifold $\mH_r^{ab}$. The charts $\mU_1$ and $\mU_4$ has a box of size $a$, while the charts $\mU_2$ and $\mU_3$ has a box of size $b$. Since the rank is $1$, the only possible choice for nonzero $_{b_i} \hat{F}_i$ is 
	$$b_1=(k/a,0), b_2=(0,j/b), b_3=(j/b,0), b_4=(0,k/a).$$
	
	For fixed $0 \leq j < b$ and $0 \leq k < a$, The $\mT$-weights of the generator on each chart are given by
	$$\begin{array}{ll}
	\mU_1: (k/a+A_1)(a,0)+ A_2(0,1),   & 
	\mU_2:  A_2(r,1) + (j/b+A_3)(-b,0),       \\
	\mU_3: (j/b+A_3) (-b,0) + A_4 (-r,-1),       & 
	\mU_4:  A_4(0,-1) +(k/a+A_1)(a,0).      \\   	
	\end{array}
	$$
	Set
	$$B_1=k+aA_1, B_2=A_2, B_3=j+bA_3, B_4=A_4.$$
	The sheaf $\mF$ is uniquely determined by $B_i$. We will show below that the fine grading is also determined.
	
	Suppose the $\mu_b$-weight of the generator is $m_1$ on chart $\mU_1$, then 
	$$_{(k/a, 0)} F_1(A_1,A_2)_{m_1} = _{(k/a, 0)} G_1(0,A_2)_{m_1- a A_1} = _{(k/a, 0)} F_1(\infty,A_2)_{m_1-a A_1}.$$
	The first equation of the gluing conditions implies that 
	$$m_1 \equiv k+ a A_1 +j- r A_2  \equiv B_1+B_3-r B_2    \mod b.$$
	
	Similarly, one can show that the fine gradings of all the generators are determined as follows:
	$$\begin{array}{ll}
	B_1 + B_3 - r B_2  \mod b \text{ on } \mU_1, &
	B_1 + B_3 - r B_2  \mod a \text{ on } \mU_2, \\
	B_1 + B_3 + r B_4  \mod a \text{ on } \mU_3, &
	B_1 + B_3 + r B_4  \mod b \text{ on } \mU_4.
	\end{array}$$
\end{example}

Denote by $L_{(B_1,B_2,B_3,B_4)}$ the $\mT$-equivariant locally free sheaf of rank $1$ corresponding to $(B_1,B_2,B_3,B_4) \in \mZ^4$.  

\begin{proposition} 
	Let $\normalfont{\text{Pic}}^T(\mH_r^{ab})$ be the $\mT$-equivariant Picard group of the Hirzebruch orbifold. Then 
	$$ (B_1,B_2,B_3,B_4) \in \mZ^4 \longmapsto L_{(B_1,B_2,B_3,B_4)} \in \normalfont{\text{Pic}}^T(\mH_r^{ab})$$ \label{24}
	is a group isomorphism. 
\end{proposition}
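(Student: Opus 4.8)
The plan is to build on the preceding example classifying $\mT$-equivariant rank-$1$ locally free sheaves on $\mH_r^{ab}$. There it is shown that such a sheaf $\mF$ is recorded by its restrictions $\hat F_1,\dots,\hat F_4$ to the charts $\mU_1,\dots,\mU_4$, that each $\hat F_i$ is a one-dimensional stacky $S$-family concentrated in a single box slot, and that the whole of $\mF$ is pinned down by the four integers $B_1=i+aA_1$, $B_2=A_2$, $B_3=j+bA_3$, $B_4=A_4$ extracted from the $\mT$-weights of the local generators; the fine gradings are then forced, namely the generator has fine grading $B_1+B_3-rB_2 \pmod b$ on $\mU_1$, $B_1+B_3-rB_2 \pmod a$ on $\mU_2$, $B_1+B_3+rB_4 \pmod a$ on $\mU_3$, and $B_1+B_3+rB_4 \pmod b$ on $\mU_4$. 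Hence $\mF\mapsto(B_1,B_2,B_3,B_4)$ is a two-sided inverse of $(B_1,B_2,B_3,B_4)\mapsto L_{(B_1,B_2,B_3,B_4)}$, so the map in the statement is a bijection of sets; it remains only to see it is a group homomorphism.

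First I would note that $L_{(0,0,0,0)}=\mO_{\mH_r^{ab}}$ with the trivial linearization — on each chart the local generator has $\mT$-weight $0$ and, by the formulas above, fine grading $0$ — so the identity of $\mZ^4$ is sent to the identity of $\text{Pic}^T(\mH_r^{ab})$. For additivity, observe that tensoring two $\mT$-equivariant line bundles tensors their $S$-families chart by chart, and on $\mU_i\cong[\mC^2/\mu_k]$ the tensor product of two one-dimensional $S$-families concentrated in box slots is again one-dimensional, with $\mT$-weight the sum of the two $\mT$-weights and fine grading the sum of the two fine gradings. Since $(B_1,B_2)$ is the $\mT$-weight of the generator on $\mU_1$ while $B_3$ and $B_4$ are read off from the generators on $\mU_2$ and $\mU_3$, the tuple attached to $L_{(\vec{B})}\otimes L_{(\vec{B}')}$ is $\vec{B}+\vec{B}'$; moreover the induced fine gradings are consistent with this sum because the four formulas displayed above are $\mZ$-linear in $(B_1,B_2,B_3,B_4)$. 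This gives the homomorphism property, and injectivity and surjectivity are immediate from the bijection (alternatively, injectivity follows from the homomorphism property together with the fact that an equivariant automorphism of $\mO_{\mH_r^{ab}}$ is a nonzero scalar, of $\mT$-weight $0$, forcing the kernel to be $0$).

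The step that takes the most care — but which is already carried out in the preceding example — is the consistency of the fine gradings: for an arbitrary tuple $(B_1,B_2,B_3,B_4)$ one must check that the data prescribed on $\mU_1,\dots,\mU_4$ really satisfies all four gluing conditions of the previous proposition simultaneously, i.e. that going once around the cycle $\mU_1\to\mU_2\to\mU_3\to\mU_4\to\mU_1$ the prescribed fine gradings close up. This ``monodromy'' vanishes precisely because $\gcd(a,b)=1$. Granting that point, the Proposition is essentially formal: the correspondence is a set bijection by the classification above, and it intertwines $\otimes$ with $+$ by the linearity just noted.
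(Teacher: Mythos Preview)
Your argument is correct and is exactly the approach the paper intends: the proposition is stated without proof because it is meant to follow immediately from the preceding example, which shows that a $\mT$-equivariant rank-$1$ locally free sheaf is determined by, and determines, the tuple $(B_1,B_2,B_3,B_4)$; you have simply made explicit the homomorphism step (additivity of $\mT$-weights and fine gradings under tensor product) that the paper leaves to the reader.

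One small caveat on your last paragraph: the claim that the cocycle/monodromy condition closes up ``precisely because $\gcd(a,b)=1$'' is not quite substantiated. The assumption $\gcd(a,b)=1$ is in force throughout the section and is used in setting up the charts and the gluing conditions (e.g.\ in identifying $\mU_{1234}\cong(\mC^*)^2$), but the consistency of the four fine gradings you list is really a direct computation from the four gluing equations, not a separate ``monodromy'' argument that hinges on coprimality at that point. It would be cleaner to say: the example already verifies that for any $(B_1,B_2,B_3,B_4)\in\mZ^4$ the prescribed local data satisfy all four gluing conditions, hence define a sheaf; this is what gives surjectivity of the map, and no further cocycle check is needed.
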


\begin{remark} \label{25}
	The non-equivariant Picard group of the Hirzebruch orbifold  $\mH_r^{ab}$ is $\mZ \oplus \mZ$ and
	$$\begin{array}{ll} 
	L_{(1,0,0,0)}=(-1,0), &
	L_{(0,0,1,0)}=(-1,0), \\
	L_{(0,1,0,0)}=(0,-1), &
	L_{(0,0,0,1)}=(-r,-1). 
	\end{array}$$
\end{remark}

\begin{example}
	Let $\mF$ be a  locally free toric sheaf of rank $2$ on the Hirzebruch surface $\mH_r^{11}$. On each chart, $\hat{F}_i$ can be described by a double filtration of $\mC^2$:
	\begin{displaymath}
	\xy
	(0,5)*{} ; (30,5)*{} **\dir{.} ; (0,10)*{} ; (30,10)*{} **\dir{.} ; (0,15)*{} ; (30,15)*{} **\dir{.} ; (0,20)*{} ; (30,20)*{} **\dir{.} ; (0,25)*{} ; (30,25)*{} **\dir{.} ;  (5,0)*{} ; (5,30)*{} **\dir{.} ; (10,0)*{} ; (10,30)*{} **\dir{.} ; (15,0)*{} ; (15,30)*{} **\dir{.} ; (20,0)*{} ; (20,30)*{} **\dir{.} ; (25,0)*{} ; (25,30)*{} **\dir{.} ; (5,30)*{} ; (5,15)*{} **\dir{-} ;  (5,15)*{} ; (15,15)*{} **\dir{-} ; (15,15)*{} ; (15,5)*{} **\dir{-} ; (15,5)*{} ; (30,5)*{} **\dir{-} ; (15,30)*{} ; (15,15)*{} **\dir{=} ; (15,15)*{} ; (30,15)*{} **\dir{=} ;
	(5,15)*{} ; (5,5)*{} **\dir{--} ; (5,5)*{} ; (15,5)*{} **\dir{--} ; (-5,15)*{\hat{F}_1} ; (5,5)*{\bullet} ; (0,0)*{(A_1,A_2)} ;  (25,25)*{\mathbb{C}^2} ; 
	(10,25)*{P_1} ; (25,10)*{P_2} ; (10,10)*{P_{12}} ;  
	(31,5)*{} ;  (31,15)*{} **\dir{-} *\dir{>}; (31,5)*{} *\dir{<}; (35,10)*{\Delta_2}; 
	(5,31)*{} ;  (15,31)*{} **\dir{-} *\dir{>}; (5,31)*{} *\dir{<}; (10,35)*{\Delta_1};  ;                                                                                                        (50,5)*{} ; (80,5)*{} **\dir{.} ; (50,10)*{} ; (80,10)*{} **\dir{.} ; (50,15)*{} ; (80,15)*{} **\dir{.} ; (50,20)*{} ; (80,20)*{} **\dir{.} ; (50,25)*{} ; (80,25)*{} **\dir{.} ; (55,0)*{} ; (55,30)*{} **\dir{.} ; (60,0)*{} ; (60,30)*{} **\dir{.} ; (65,0)*{} ; (65,30)*{} **\dir{.} ; (70,0)*{} ; (70,30)*{} **\dir{.} ; (75,0)*{} ; (75,30)*{} **\dir{.} ;  (55,30)*{} ; (55,20)*{} **\dir{-} ; (55,20)*{} ; (65,20)*{} **\dir{-} ; (65,20)*{} ; (65,5)*{} **\dir{-} ; (65,5)*{} ; (80,5)*{} **\dir{-}; (65,30)*{} ; (65,20)*{} **\dir{=}; (65,20)*{}; (80,20)*{} **\dir{=}; (55,20)*{} ; (55,5)*{} **\dir{--} ; (55,5)*{} ; (65,5)*{} **\dir{--} ; (45,15)*{\hat{F}_2} ; (55,5)*{\bullet} ; (50,0)*{(A_2,A_3)} ;  (75,25)*{\mathbb{C}^2} ;   
	(60,25)*{P_2} ; (73,12)*{P_3} ; (60,12)*{P_{23}};
	(81,5)*{} ;  (81,20)*{} **\dir{-} *\dir{>}; (81,5)*{} *\dir{<}; (85,12)*{\Delta_3};  
	(55,31)*{} ;  (65,31)*{} **\dir{-} *\dir{>}; (55,31)*{} *\dir{<}; (60,35)*{\Delta_2}; 
	;                           
	\endxy 
	\end{displaymath}
	\begin{displaymath}
	\xy
	(0,5)*{} ; (30,5)*{} **\dir{.} ; (0,10)*{} ; (30,10)*{} **\dir{.} ; (0,15)*{} ; (30,15)*{} **\dir{.} ; (0,20)*{} ; (30,20)*{} **\dir{.} ; (0,25)*{} ; (30,25)*{} **\dir{.} ; (5,0)*{} ; (5,30)*{} **\dir{.} ; (10,0)*{} ; (10,30)*{} **\dir{.} ; (15,0)*{} ; (15,30)*{} **\dir{.} ; (20,0)*{} ; (20,30)*{} **\dir{.} ; (25,0)*{} ; (25,30)*{} **\dir{.} ;  (5,30)*{} ; (5,10)*{} **\dir{-} ;  (5,10)*{} ; (20,10)*{} **\dir{-} ; (20,10)*{} ; (20,5)*{} **\dir{-} ; (20,5)*{} ; (30,5)*{} **\dir{-} ;
	(20,30)*{} ; (20,10)*{} **\dir{=};(20,10)*{} ; (30,10)*{} **\dir{=} ; (5,10)*{} ; (5,5)*{} **\dir{--} ; (5,5)*{} ; (20,5)*{} **\dir{--} ;  
	(-5,15)*{\hat{F}_3} ; (5,5)*{\bullet} ; (0,0)*{(A_3,A_4)} ;  (25,25)*{\mathbb{C}^2} ;
	(13,20)*{P_3} ; (25,7)*{P_4} ; (13,7)*{P_{34}} ;  
	(31,5)*{} ;  (31,10)*{} **\dir{-} *\dir{>}; (31,5)*{} *\dir{<}; (35,7)*{\Delta_4}; 
	(5,31)*{} ;  (20,31)*{} **\dir{-} *\dir{>}; (5,31)*{} *\dir{<}; (13,35)*{\Delta_3};  ;                                                                                                        (50,5)*{} ; (80,5)*{} **\dir{.} ; (50,10)*{} ; (80,10)*{} **\dir{.} ; (50,15)*{} ; (80,15)*{} **\dir{.} ; (50,20)*{} ; (80,20)*{} **\dir{.} ; (50,25)*{} ; (80,25)*{} **\dir{.} ; (55,0)*{} ; (55,30)*{} **\dir{.} ; (60,0)*{} ; (60,30)*{} **\dir{.} ; (65,0)*{} ; (65,30)*{} **\dir{.} ; (70,0)*{} ; (70,30)*{} **\dir{.} ; (75,0)*{} ; (75,30)*{} **\dir{.} ;   (55,30)*{} ; (55,15)*{} **\dir{-} ; (55,15)*{} ; (60,15)*{} **\dir{-} ; (60,15)*{} ; (60,5)*{} **\dir{-} ; (60,5)*{} ; (80,5)*{} **\dir{-} ; 
	(60,30)*{} ; (60,15)*{} **\dir{=} ; (60,15)*{} ; (80,15)*{} **\dir{=}; (55,15)*{} ; (55,5)*{} **\dir{--} ; (55,5)*{} ; (60,5)*{} **\dir{--} ; (45,15)*{\hat{F}_4} ; (55,5)*{\bullet} ; (50,0)*{(A_4,A_1)} ;  (75,25)*{\mathbb{C}^2};
	(57,25)*{P_4} ; (70,10)*{P_1} ; (57,10)*{P_{41}};
	(81,5)*{} ;  (81,15)*{} **\dir{-} *\dir{>}; (81,5)*{} *\dir{<}; (85,10)*{\Delta_1};  
	(55,31)*{} ;  (60,31)*{} **\dir{-} *\dir{>}; (55,31)*{} *\dir{<}; (58,35)*{\Delta_4};  ;                                                                                                                                                                                                                                                                                                                                                              
	\endxy 
	\end{displaymath}
	
	Hence $\mF$ is fully determined by $A_1, A_2, A_3, A_4 \in \mZ$, $\Delta_1, \Delta_2, \Delta_3, \Delta_4 \in \mZ_{\geq 0}$ and $P_1, P_2, P_3, P_4 \subset \mC^2$, which can also be viewed as a point $(P_1,P_2,P_3,P_4) \in (\mP^1)^4$. The label $P_{ij}$ stands for the vector space $P_i \cap P_j$.
	
	Generally, for torsion free toric sheaves, the double filtrations may not have strict corners \cite{Koo10}.
	\begin{displaymath}
	\xy
	(0,5)*{} ; (30,5)*{} **\dir{.} ; (0,10)*{} ; (30,10)*{} **\dir{.} ; (0,15)*{} ; (30,15)*{} **\dir{.} ; (0,20)*{} ; (30,20)*{} **\dir{.} ; (0,25)*{} ; (30,25)*{} **\dir{.} ;  (5,0)*{} ; (5,30)*{} **\dir{.} ; (10,0)*{} ; (10,30)*{} **\dir{.} ; (15,0)*{} ; (15,30)*{} **\dir{.} ; (20,0)*{} ; (20,30)*{} **\dir{.} ; (25,0)*{} ; (25,30)*{} **\dir{.} ; (5,30)*{} ; (5,25)*{} **\dir{-} ;  (5,25)*{} ; (10,25)*{} **\dir{-} ; (10,25)*{}; (10,20)*{} **\dir{-} ; (10,20)*{} ; (15,20)*{} **\dir{-}; (20,20)*{}; (20,15)*{} **\dir{-} ; (20,15)*{} ; (25,15)*{} **\dir{-}; (25,15)*{} ; (25,5)*{} **\dir{-}; (25,5)*{} ; (30,5)*{} **\dir{-};   (15,30)*{}; (15,20)*{} **\dir{=} ; (15,20)*{} ; (25,20)*{} **\dir{=}; (25,20)*{}; (25,15)*{} **\dir{=} ; (25,15)*{} ; (30,15)*{} **\dir{=};
	(-5,15)*{\hat{F}_1} ; (5,5)*{\bullet} ; (0,0)*{(A_1,A_2)} ;  (25,25)*{\mathbb{C}^2} ; 
	(10,27)*{P_1} ; (27,10)*{P_2} ; (22,17)*{P'} ;   
	(31,5)*{} ;  (31,15)*{} **\dir{-} *\dir{>}; (31,5)*{} *\dir{<}; (35,10)*{\Delta_2}; 
	(5,31)*{} ;  (15,31)*{} **\dir{-} *\dir{>}; (5,31)*{} *\dir{<}; (10,35)*{\Delta_1};  ;                                                                                                                        
	\endxy 
	\end{displaymath}
	
\end{example}

\begin{example} \label{27}
	Let $\mF$ be a locally free toric sheaf of rank $2$ on the Hirzebruch orbifold $\mH_r^{ab}$. Since the rank is $2$, either $1$ or $2$ box summands are nonempty. There are $4$ possible choices for $ _{b_i} \hat{F}_i$ to be nonzero. 
	
	\begin{enumerate}
		\item $ _{b_i} \hat{F}_i \neq 0$  for $b_1=(k/a,0), b_2=(0,j/b), b_3=(j/b,0), b_4=(0,k/a). $ 
		
		On each chart, it is described by a double filtration as for $\mH_{r}^{11}$.
		
		Since we will later work on stable toric sheaves and the decomposable sheaves are not stable, we would like to classify all the types of indecomposable toric sheaves. They are listed below: 
		\begin{enumerate} 
			\item $\Delta_i > 0$ for all $i$. $P_i$'s are mutually distinct.
			\item $\Delta_{i'} = 0$ for some unique $i'$ and $\Delta_i > 0$ for $i \neq i'$. $P_{i'}$ is omitted and $P_i$'s are mutually distinct for $i \neq {i'}$. 
			\item $\Delta_i > 0$ for all $i$. Only two of $P_i$'s are same.
		\end{enumerate}
		
		\item $_{b_i} \hat{F}_i \neq 0$ for $b_1=(k/a,0), b_2=(0,j/b), b_2=(0,j'/b),
		b_3=(j/b,0), b_3=(j'/b,0), b_4=(0,k/a).$
		
		Suppose $A_2' - A_2 = \Delta_2 \geq 0$ and  $A_4'-A_4 = \Delta_4 \geq 0$. Denote $\Delta_3= A_3'-A_3$. Sheaves of this type are fully determined by $A_1, A_2, A_3, A_4, b \nmid \Delta_3 \in \mZ$, $\Delta_1, \Delta_2, \Delta_4 \in \mZ_{\geq 0}$, and $P_1 \neq P_2 \subset \mC^2$. They are decomposable and can be described as follows: 
		
		\begin{displaymath}
		\xy
		(0,5)*{} ; (30,5)*{} **\dir{.} ; (0,10)*{} ; (30,10)*{} **\dir{.} ; (0,15)*{} ; (30,15)*{} **\dir{.} ; (0,20)*{} ; (30,20)*{} **\dir{.} ; (0,25)*{} ; (30,25)*{} **\dir{.} ;  (5,0)*{} ; (5,30)*{} **\dir{.} ; (10,0)*{} ; (10,30)*{} **\dir{.} ; (15,0)*{} ; (15,30)*{} **\dir{.} ; (20,0)*{} ; (20,30)*{} **\dir{.} ; (25,0)*{} ; (25,30)*{} **\dir{.} ; (5,30)*{} ; (5,15)*{} **\dir{-} ;  (5,15)*{} ; (15,15)*{} **\dir{-} ; (15,15)*{} ; (15,5)*{} **\dir{-} ; (15,5)*{} ; (30,5)*{} **\dir{-} ; (15,30)*{} ; (15,15)*{} **\dir{=} ; (15,15)*{} ; (30,15)*{} **\dir{=} ;
		(-5,15)*{_{(k/a,0)}\hat{F}_1} ; (5,5)*{\bullet} ; (0,0)*{(A_1,A_2)} ;  (25,25)*{\mathbb{C}^2} ; 
		(10,25)*{P_2} ; (25,10)*{P_1} ;  
		(31,5)*{} ;  (31,15)*{} **\dir{-} *\dir{>}; (31,5)*{} *\dir{<}; (35,10)*{\Delta_2}; 
		(5,31)*{} ;  (15,31)*{} **\dir{-} *\dir{>}; (5,31)*{} *\dir{<}; (10,35)*{\Delta_1};  
		;            
		(55,25)*{} ; (75,25)*{} **\dir{.}; (55,30)*{} ; (75,30)*{} **\dir{.} ; (55,35)*{} ; (75,35)*{} **\dir{.}; (60,20)*{} ; (60,40)*{} **\dir{.} ; (65,20)*{} ; (65,40)*{} **\dir{.}; (70,20)*{} ; (70,40)*{} **\dir{.};  (60,40)*{} ; (60,25)*{} **\dir{-};(60,25)*{} ; (75,25)*{} **\dir{-} ;  (45,30)*{_{(0,j/b)}\hat{F}_2} ; (60,25)*{\bullet} ; (55,20)*{(A_2 ,A_3)} ;  (70,35)*{P_1} ; 
		;
		(55,-5)*{} ; (75,-5)*{} **\dir{.}; (55,0)*{} ; (75,0)*{} **\dir{.} ; (55,5)*{} ; (75,5)*{} **\dir{.}; (60,-10)*{} ; (60,10)*{} **\dir{.} ; (65,-10)*{} ; (65,10)*{} **\dir{.}; (70,-10)*{} ; (70,10)*{} **\dir{.};  (60,10)*{} ; (60,-5)*{} **\dir{-};(60,-5)*{} ; (75,-5)*{} **\dir{-} ;  (45,0)*{_{(0,j'/b)}\hat{F}_2} ; (60,-5)*{\bullet} ; (55,-10)*{(A_2' ,A_3')} ;  (70,5)*{P_2} ; 
		\endxy 
		\end{displaymath}		
		\begin{displaymath}
		\xy
		(5,25)*{} ; (25,25)*{} **\dir{.}; (5,30)*{} ; (25,30)*{} **\dir{.} ; (5,35)*{} ; (25,35)*{} **\dir{.}; (10,20)*{} ; (10,40)*{} **\dir{.} ; (15,20)*{} ; (15,40)*{} **\dir{.}; (20,20)*{} ; (20,40)*{} **\dir{.};  (10,40)*{} ; (10,25)*{} **\dir{-};(10,25)*{} ; (25,25)*{} **\dir{-} ;  (-5,30)*{_{(j/b,0)}\hat{F}_3} ; (10,25)*{\bullet} ; (5,20)*{(A_3 ,A_4)} ;  (20,35)*{P_1} ; 
		;
		(5,-5)*{} ; (25,-5)*{} **\dir{.}; (5,0)*{} ; (25,0)*{} **\dir{.} ; (5,5)*{} ; (25,5)*{} **\dir{.}; (10,-10)*{} ; (10,10)*{} **\dir{.} ; (15,-10)*{} ; (15,10)*{} **\dir{.}; (20,-10)*{} ; (20,10)*{} **\dir{.};  (10,10)*{} ; (10,-5)*{} **\dir{-};(10,-5)*{} ; (25,-5)*{} **\dir{-} ;  (-5,0)*{_{(j'/b,0)}\hat{F}_3} ; (10,-5)*{\bullet} ; (5,-10)*{(A_3' ,A_4')} ;  (20,5)*{P_2} ;  
		;  
		(50,5)*{} ; (80,5)*{} **\dir{.} ; (50,10)*{} ; (80,10)*{} **\dir{.} ; (50,15)*{} ; (80,15)*{} **\dir{.} ; (50,20)*{} ; (80,20)*{} **\dir{.} ; (50,25)*{} ; (80,25)*{} **\dir{.} ; (55,0)*{} ; (55,30)*{} **\dir{.} ; (60,0)*{} ; (60,30)*{} **\dir{.} ; (65,0)*{} ; (65,30)*{} **\dir{.} ; (70,0)*{} ; (70,30)*{} **\dir{.} ; (75,0)*{} ; (75,30)*{} **\dir{.} ;  (55,30)*{} ; (55,15)*{} **\dir{-} ; (55,15)*{} ; (65,15)*{} **\dir{-} ; (65,15)*{} ; (65,5)*{} **\dir{-} ; (65,5)*{} ; (80,5)*{} **\dir{-}; (65,30)*{} ; (65,15)*{} **\dir{=}; (65,15)*{}; (80,15)*{} **\dir{=}; (55,15)*{} ;  
		(45,15)*{_{(0,k/a)}\hat{F}_4} ; (55,5)*{\bullet} ; (50,0)*{(A_4,A_1)} ;  (75,25)*{\mathbb{C}^2} ;   
		(60,25)*{P_1} ; (75,10)*{P_2} ; 
		(81,5)*{} ;  (81,15)*{} **\dir{-} *\dir{>}; (81,5)*{} *\dir{<}; (85,10)*{\Delta_1};  
		(55,31)*{} ;  (65,31)*{} **\dir{-} *\dir{>}; (55,31)*{} *\dir{<}; (60,35)*{\Delta_4}; 
		;                           
		\endxy 
		\end{displaymath}
		
		\item $_{b_i} \hat{F}_i \neq 0$ for $b_1=(k/a,0), b_1=(k'/a,0), b_2=(0,j/b), b_3=(j/b,0), b_4=(0,k/a), b_4=(0,k'/a).$
		
		It's similar to the second case and all the toric sheaves of this type are decomposable.
		
		\item Two box summands are nonzero for all the charts. 
		
		It can be easily seen that $\mF$ is decomposable in this case.
		
	\end{enumerate}
\end{example}  

\section{Hilbert Polynomial}

\subsection{K-Group} ~\\

Let $K_0(\mH_r^{ab})$ be the Grothendieck group of coherent sheaves on the Hirzebruch orbifold $\mH_r^{ab}$. By \cite{BH06}, $K_0(\mH_r^{ab})_\mQ$ is isomorphic to the quotient ring
$\mQ[g^{\pm}, h^{\pm}]/I$ where $I$ is generated by 
$$\left\{ \begin{array}{l}
(1-g^a)(1-g^b)(1-h) \\
(1-g^a)(1-g^b)(1-g^rh)\\
(1-g^a)(1-h)(1-g^rh)\\
(1-g^b)(1-h)(1-g^rh).
\end{array}\right.$$
Here $g:=[(-1,0)], h:=[(0,-1)]$ are $K$-group classes of the generators of $\text{Pic}(\mH_r^{ab}) \cong \mZ \oplus \mZ$.

Recall that the $\mT$-action on $\mH_r^{ab}$ has four fixed points corresponding to the origin of each chart. Denote them by $P_1, P_2, P_3, P_4$.

\begin{proposition} \label{31}
	In $K_0(\mH_r^{ab})$, we have
	$$\begin{array}{ll}
	\left[ \mO_{P_1} \otimes \hat{\mu}_b^i \right] = (1-g^a)(1-h)g^i,  &        
	\left[ \mO_{P_2} \otimes \hat{\mu}_a^i \right]= (1-g^b)(1-h)g^i,  \\
	\left[ \mO_{P_3} \otimes \hat{\mu}_a^i \right] = (1-g^b)(1-g^rh)g^i,  &
	\left[ \mO_{P_4} \otimes \hat{\mu}_b^i \right] = (1-g^a)(1-g^rh)g^i.  
	\end{array}$$
\end{proposition}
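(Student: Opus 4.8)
The plan is to resolve each twisted skyscraper sheaf by a global Koszul complex built from the toric boundary divisors $\mD_{\rho_1},\dots,\mD_{\rho_4}$, read off the resulting alternating sum of line‑bundle classes, and then translate it into the generators $g,h$ of $K_0(\mH_r^{ab})$.

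Take $P_1$ first. It is the closed substack $V(\sigma_1)=\mD_{\rho_1}\cap\mD_{\rho_2}\cong B\mu_b$, and on the chart $\mU_1\cong[\mC^2/\mu_b]$ it is the origin, cut out by the two coordinate functions, which are the canonical sections of $\mO(\mD_{\rho_1})$ and $\mO(\mD_{\rho_2})$. Since $\rho_1$ and $\rho_2$ span the cone $\sigma_1$, these two divisors meet in the expected codimension $2$, so the Koszul complex on the two sections is globally a locally free resolution
$$0\to\mO(-\mD_{\rho_1}-\mD_{\rho_2})\to\mO(-\mD_{\rho_1})\oplus\mO(-\mD_{\rho_2})\to\mO_{\mH_r^{ab}}\to\mO_{P_1}\to 0.$$
Tensoring with a global line bundle $L$ and using the projection formula (so that $\mO_{P_1}\otimes L$ depends only on $L|_{P_1}\in\mathrm{Pic}(B\mu_b)=\mZ/b\mZ$), one obtains in $K_0(\mH_r^{ab})$ the identity
$$[\mO_{P_1}\otimes\hat{\mu}_b^{i}]=[L]\cdot\bigl(1-[\mO(-\mD_{\rho_1})]\bigr)\bigl(1-[\mO(-\mD_{\rho_2})]\bigr)$$
for any $L$ with $L|_{P_1}\cong\hat{\mu}_b^{i}$. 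The identical argument applies at $P_2,P_3,P_4$ using the two rays of $\sigma_2,\sigma_3,\sigma_4$ and the local groups $\mu_a,\mu_a,\mu_b$.

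It remains to identify the classes entering these four products. In the presentation $K_0(\mH_r^{ab})_\mQ=\mQ[g^{\pm},h^{\pm}]/I$ the four elements appearing in the relations are $g^a,g^b,h,g^rh$, and these are exactly the classes $[\mO(-\mD_{\rho_j})]$ (this is the standard description of $K_0$ of a toric DM stack: the relations of $I$ are the Stanley–Reisner relations $(1-[\mO(-\mD_\rho)])(1-[\mO(-\mD_{\rho'})])=[\mO_{V(\{\rho,\rho'\})}]=0$ for non‑faces, here $\{\rho_1,\rho_3\}$ and $\{\rho_2,\rho_4\}$, together with the Picard relations coming from the matrix \eqref{1.3}). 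Matching them to rays via the cone incidences $\sigma_i\cap\sigma_{i+1}=\{\rho_{i+1}\}$ gives
$$[\mO(-\mD_{\rho_1})]=g^a,\quad [\mO(-\mD_{\rho_2})]=h,\quad [\mO(-\mD_{\rho_3})]=g^b,\quad [\mO(-\mD_{\rho_4})]=g^rh.$$
For the twisting bundle I would take $L=L_{(i,0,0,0)}=g^i$ at $P_1,P_4$ and $L=L_{(0,0,i,0)}=g^i$ at $P_2,P_3$: the fine‑grading computation in the rank‑one locally free example shows that the generator of $L_{(1,0,0,0)}$ (resp.\ $L_{(0,0,1,0)}$) carries the weight‑$1$ character of the local group over $\mU_1,\mU_4$ (resp.\ $\mU_2,\mU_3$), so that $L|_{P_k}\cong\hat{\mu}^{i}$ in each case. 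Substituting these data into the displayed identity produces the four formulas of the proposition.

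The main obstacle is precisely this last bookkeeping step: pinning down which of $g^a,g^b,h,g^rh$ is $[\mO(-\mD_{\rho_j})]$, and — more delicately — checking that twisting $\mO_{P_k}$ by the character $\hat{\mu}^{i}$ corresponds to multiplication by $g^i$ rather than by $g^{-i}$ or $g^{ci}$ for some other $c$. This is exactly where the earlier Picard and fine‑grading computations are needed, and it is worth recording the restriction maps $\mathrm{Pic}(\mH_r^{ab})\to\mathrm{Pic}(P_k)$ explicitly. Once these identifications are fixed, the global Koszul resolution reduces the rest of the proof to a one‑line substitution at each of the four fixed points.
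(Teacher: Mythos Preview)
Your approach is correct and is essentially the paper's own argument, just phrased as a global Koszul resolution rather than read off from the $S$-family picture: the paper observes from the diagram of $\hat F_1$ the $K$-theory identity $[\mO_{P_1}]=[L_{(0,0,0,0)}]-[L_{(a,0,0,0)}]-[L_{(0,1,0,0)}]+[L_{(a,1,0,0)}]=(1-g^a)(1-h)$, which is exactly your Koszul alternating sum, and then handles the twist via the fine-grading formula from the rank-one example. The only cosmetic difference is that the paper twists by $L_{(0,0,i,0)}$ at $P_1$ rather than your $L_{(i,0,0,0)}$, but both restrict to $\hat\mu_b^{\,i}$ since $B_1+B_3-rB_2\equiv i\bmod b$ in either case, so your bookkeeping worry is already resolved by that formula.
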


\begin{proof}
	The sheaf $\mO_{P_1} \otimes \hat{\mu}_b^i$ is described by a $S$-family where $\hat{F}_2=\hat{F}_3=\hat{F}_4=0$ and $\hat{F}_1$ only consists of a single vector space $\mC$ with $\mu_b$-weight $i$ at the position $(0,0)$.
	
	\begin{displaymath}
	\xy
	(0,0)*{} ; (30,0)*{} **\dir{} ; (0,5)*{} ; (30,5)*{} **\dir{.} ; (0,10)*{} ; (30,10)*{} **\dir{.} ; (0,15)*{} ; (30,15)*{} **\dir{.} ; (0,20)*{} ; (30,20)*{} **\dir{.} ; (0,25)*{} ; (30,25)*{} **\dir{.} ; (0,30)*{} ; (30,30)*{} **\dir{} ;                   (0,0)*{} ; (0,30)*{} **\dir{} ; (5,0)*{} ; (5,30)*{} **\dir{.} ; (10,0)*{} ; (10,30)*{} **\dir{.} ; (15,0)*{} ; (15,30)*{} **\dir{.} ; (20,0)*{} ; (20,30)*{} **\dir{.} ; (25,0)*{} ; (25,30)*{} **\dir{.} ; (30,0)*{} ; (30,30)*{} **\dir{} ; (5,30)*{} ; (5,5)*{} **\dir{-} ; (5,5)*{} ; (30,5)*{} **\dir{-} ;  (10,30)*{} ; (10,5)*{} **\dir{--} ; (5,10)*{} ; (30,10)*{} **\dir{--} ; (-5,20)*{\hat{F}_1} ; (5,5)*{\bullet} ; (0,2.5)*{(0,0)} ; (15,12.5)*{(1,1)} ; (15,2.5)*{(1,0)} ; (0,12.5)*{(0,1)};  (25,25)*{\mathbb{C}} ;                                                                                                                             
	\endxy 
	\end{displaymath}
	
	Using the description of the line bundle introduced in \hyperref[24]{Proposition \ref*{24}}, we can construct the exact sequence:
	$$0 \longrightarrow L_{(a \cdot 1,1,0,0)} \longrightarrow   L_{(a \cdot 1,0,0,0)} \oplus L_{(0,1,0,0)} \longrightarrow L_{(0,0,0,0)}  \longrightarrow \mO_{P_1} \longrightarrow 0 . $$
	Hence 
	$$[\mO_{P_1}]=1+g^ah-g^a-h=(1-g^a)(1-h).$$
	Since $B_1=a A_1=0, B_2=A_2=0$, the fine grading of $\mO_{P_1} \otimes \hat{\mu}_b^i$ is equal to $B_3 \text{ mod } b$ on $\mU_1$. As a result, 
	$$[ \mO_{P_1} \otimes \hat{\mu}_b^i ] = [ \mO_{P_1} \otimes L_{(0,0,i,0)}] = (1-g^a)(1-h)g^i. $$
	The calculation for other charts is similar.
\end{proof}

Now let's consider the general case. Suppose there is a $S$-family such that $\hat{F}_2=\hat{F}_3=\hat{F}_4=0$ and $\hat{F}_1$ consists of a single space $\mC$ with $\mu_b$-grading $i$ at the position $(k/a+A_1)(a,0) + A_2(0,1).$
Then the corresponding sheaf is $\mO_{P_1} \otimes L_{(k+ aA_1 ,A_2, i-k-a A_1 + r A_2,0)}$ and its class in $K_0(\mH_r^{ab})$ is 
$$(1-g^a)(1-h)g^{i+rA_2}h^{A_2} =(1-g^a)(1-h)g^i.$$

As a result, the class of such a sheaf in $K_0(\mH_r^{ab})$ only depends on the fine grading. This is quite useful when we calculate the Hilbert polynomial later.

\subsection{Riemann-Roch} ~\\

Riemann-Roch on Deligne-Mumford stacks was first proved in \cite{Toe99}. Later, \cite{Edi12} gave a simpler proof based on the equivariant localization theorem. In our paper, we will follow the notation of  inertia stacks used in the appendix of \cite{Tse10}, which is essentially same as \cite[Section 4]{Edi12}.  

Recall from \cite{BCS05} that for each $d$-dimensional cone in the fan $\Sigma$, Box($\sigma$) is the set of	elements $v \in N \cong \mZ^2$ such that $v=\sum_{\rho_i \in \sigma} q_i b_i$ where $b_i$ is the $i$th column of the matrix $B$ \eqref{1.3.3} and $0 \leq q_i < 1$ with $q_i \in \mQ$. Denote by Box($\Sigma$) the union of Box($\sigma$) for all $d$-dimensional cones. 

Since $\mH_r^{ab} \cong [Z/G]$ is a quotient stack, each component of its inertia stack is isomorphic to $[Z^g/G]$ where $Z^g$ denotes the locus of points fixed in $Z$ by $g$. By \cite{BCS05}, the elements $v \in \text{Box}(\Sigma)$ are in one-to one correspondence with elements $g \in G$ that fix a point of $Z$. 

Suppose $\text{gcd}(a,b)=1$. A box element for the stacky fan of the Hirzebruch orbifold $\mH_r^{ab}$ can be in Box($\rho_1$), Box($\rho_3$) or Box($\sigma_i$) for $1 \leq i \leq 4$. Hence to find all the components of the inertia stack, we classify all the substacks which correspond to the minimal cones that contain the box elements.

If a box element is on Box($\rho_1$), then $x=0$ and the corresponding stabilizer $g=(\tau,\lambda)$ must satisfy  $\lambda=1, \tau^b=1, \tau^r \lambda=1$. Suppose $\text{gcd}(r,b)=p$, then 
$$g=(e^{2\pi \sqrt{-1} \frac{l}{p}}, 1), l=1,...,p-1.$$	
Hence the corresponding component of the inertia stack is 
$$ \mX_{\rho_1} \cong [Z^g/G] \cong
[\mC^3-V(yz,zw)/(\mC^*)^2],  \quad (\tau,\lambda): (y,z,w) \to (\lambda y, \tau^b z, \tau^r \lambda w).$$

Let $\text{gcd}(r,a)=q$. We summarize all the components in the table below:

$$\begin{tabular}{|c|c|c|c|}
\hline 
& stablizer $g$ & substack $[Z^g/G]$ &  $(\tau,\lambda) \in (\mC^*)^2$-action \\[0.2em] \hline 
$\rho_1$  &   $\begin{array}{c}
(e^{2\pi \sqrt{-1} \frac{l}{p}}, 1) \\
l=1,...,p-1
\end{array}$    
&   $[\mC^3-V(yz,zw)/(\mC^*)^2]$   &   $(y,z,w)   \to   (\lambda y, \tau^b z, \tau^r \lambda w)$   \\[0.2em] \hline
$\rho_3$  &   $\begin{array}{c}
(e^{2\pi \sqrt{-1} \frac{l}{q}}, 1)\\ l=1,...,q-1
\end{array}$   
&   $[\mC^3-V(xy,wx)/(\mC^*)^2]  $ &   $(x,y,w)   \to   (\tau^a x, \lambda y, \tau^r \lambda w)$   \\[0.2em] \hline
$\sigma_1$ &  $\begin{array}{c} (e^{2\pi \sqrt{-1} \frac{l}{b}}, e^{-2\pi \sqrt{-1} \frac{sal}{b}} ) \\ \frac{b}{p} \nmid l, l=1,...,b-1 
\end{array} $   
&   $[\mC^2-V(zw)/(\mC^*)^2]$   &   $(z,w)   \to   (\tau^b z, \tau^r \lambda w)$   \\[0.2em] \hline
$\sigma_2$   &   $\begin{array}{c}
(e^{2\pi \sqrt{-1} \frac{l}{a}}, e^{-2\pi \sqrt{-1} \frac{tbl}{a}} ) \\ \frac{a}{q} \nmid l, l=1,...,a-1 
\end{array}$  
&   $[\mC^2-V(xw)/(\mC^*)^2]$   &   $(x,w)   \to   (\tau^a x, \tau^r \lambda w)$   \\[0.2em] \hline
$\sigma_3$   &   $\begin{array}{c} (e^{2\pi \sqrt{-1} \frac{l}{a}}, 1 ) \\  \frac{a}{q} \nmid l, l=1,...,a-1
\end{array}$  
&    $[\mC^2-V(xy)/(\mC^*)^2]$   &   $ (x,y)   \to   (\tau^a x, \lambda y)$   \\[0.2em] \hline
$\sigma_4$   & $\begin{array}{c} (e^{2\pi \sqrt{-1} \frac{l}{b}}, 1 ) \\
\frac{b}{p} \nmid l, l=1,...,b-1 
\end{array}$ 
&   $[\mC^2-V(yz)/(\mC^*)^2]$   &  $ (y,z)   \to   ( \lambda y, \tau^b z)$ \\[0.2em] \hline
\end{tabular}
$$

Write $I\mH_r^{ab}$ for the inertia stack of the Hirzebruch orbifold $\mH_r^{ab}$. Let $\pi:I\mH_r^{ab} \rightarrow \mH_r^{ab}$ be the natural projection.
Suppose a vector bundle $V$ on $I\mH_r^{ab} $ is decomposed into a direct sum $\oplus_{\zeta_i} V_i$ of eigenbundles with eigenvalue $\zeta_i$. Let $\mu_\infty$ be the group of all roots of unity, then we define $\rho(V):=\sum_{\zeta_i} \zeta_i V^i$
and $\widetilde{ch} :K^0 (\mH_r^{ab} ) \to A^*(I \mH_r^{ab}) \otimes \mu_\infty$ as the composition $$K^0(\mH_r^{ab}) \xrightarrow{\pi^*} K^0(I\mH_r^{ab}) \xrightarrow{\rho} K^0(I\mH_r^{ab}) \otimes \mu_\infty \xrightarrow{ch} A^*(I\mH_r^{ab}) \otimes \mu_\infty.$$
For a line bundle $L$ on $\mH_r^{ab}$, define $\widetilde{Td}:\text{Pic}(\mH_r^{ab}) \rightarrow A^*(I\mH_r^{ab}) \otimes \mu_\infty $ as
$$\widetilde{Td}(L)=\left\{  
\begin{array}{ll}
Td(\pi^*L)    & \text{if the eigenvalue of } \pi^*L \text{ is } 1 \\
\ds \frac{1}{ch(1- \zeta^{-1} \cdot \pi^* L^\vee)} & \text{if the eigenvalue of } \pi^*L \text{ is } \zeta \neq 1.
\end{array} 
\right.
$$
Then by Riemann-Roch, the Euler characteristic of a coherent sheaf $\mF$ on $\mH_r^{ab}$ is given by 
$$\chi(\mF)= \int_{I \mH_r^{ab} } \widetilde{ch} (\mF) \cdot \widetilde{Td}(\mO(\mD_{\rho_1})) \cdot \widetilde{Td}(\mO(\mD_{\rho_2})) \cdot \widetilde{Td}(\mO(\mD_{\rho_3})) \cdot \widetilde{Td}(\mO(\mD_{\rho_4}))$$
where $\mD_{\rho_i}$ is the divisor corresponding to the ray $\rho_i$ in  \hyperref[fig2]{Figure \ref*{fig2}}.

\begin{proposition}
	Suppose $\normalfont{\text{gcd}}(a,b)=1$. Consider the line bundle $(m,n) \in \normalfont{\text{Pic}} (\mH_r^{ab})$ (\hyperref[25]{Remark \ref*{25}}). The Euler characteristic is given as follows:
	\begin{align*}
	\chi( (m,n)) & =  \ds  \frac{1+n}{2a} + \frac{1+n}{2b} + \frac{(1+n)m}{ab} - \frac{n(n+1)r}{2ab} + \sum_{l=1}^{p-1}  \frac{\omega_p^{ml}}{1-\omega_p^{-al}} \frac{n+1}{b}   \\
	& \quad + \ds \sum_{l=1}^{q-1}  \frac{\omega_q^{ml}}{1-\omega_q^{-bl}} \frac{n+1}{a} 
	+ \sum_{\tiny{\begin{array}{c} l=1 \\ \frac{b}{p} \nmid l \end{array}}}^{b-1} \frac{\omega_b^{ml}}{1-\omega_b^{-al}} \left(\frac{{1-\omega_b^{-(n+1)sal} }}{1-\omega_b^{-sal}} \right) \frac{1}{b}  \\
	& \quad + \ds \sum_{\tiny{\begin{array}{c} l=1 \\ \frac{a}{q} \nmid l \end{array}}}^{a-1} \frac{\omega_a^{ml}}{1-\omega_a^{-bl}} \left(\frac{{1-\omega_a^{-(n+1)tbl} }}{1-\omega_a^{-tbl}} \right) \frac{1}{a}.
	\end{align*} 
	where $p=\normalfont{\text{gcd}(b,r)}$, $q=\normalfont{\text{gcd}(a,r)}$ and $\omega_k=e^{\frac{2\pi \sqrt{-1}}{k}}$ for $k=a,b,p,q$. Especially, $\chi(\mO_{\mH_r^{ab}})=\chi(\mO(\mD_{\rho_1}))=\chi(\mO(\mD_{\rho_3}))=1$. 
\end{proposition}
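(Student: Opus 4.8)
\emph{Approach.} The plan is to feed the line bundle $(m,n)$ into the Riemann--Roch formula displayed just above, organised component by component over the inertia stack $I\mH_r^{ab}$, whose pieces have already been tabulated. First I would express $(m,n)$ as $\big[\sum_i a_i\mD_{\rho_i}\big]$ for an explicit integral combination of the four toric divisors (possible since the $\mD_{\rho_i}$ generate $\mathrm{Pic}(\mH_r^{ab})$). This makes both $\widetilde{ch}((m,n))$ and the four factors $\widetilde{Td}(\mO(\mD_x)),\dots,\widetilde{Td}(\mO(\mD_w))$ fully explicit on each component: on the sector attached to a box element $f$ the eigenvalue of $\pi^*\mO(\mD_{\rho_i})$ is read off from the action $(\tau,\lambda)\cdot(x,y,z,w)=(\tau^a x,\lambda y,\tau^b z,\tau^r\lambda w)$, and $\widetilde{Td}$ uses the branch $Td(\pi^*L)$ precisely when that eigenvalue is $1$ (equivalently, when $\rho_i$ lies in the minimal cone of $f$) and the branch $1/ch(1-\zeta^{-1}\pi^*L^\vee)$ otherwise; similarly $\widetilde{ch}((m,n))$ contributes the root-of-unity prefactors $\omega_p^{ml}$, $\omega_q^{ml}$, $\omega_b^{ml}$, $\omega_a^{ml}$ on the $\rho_1$-, $\rho_3$-, $\sigma_1/\sigma_4$-, and $\sigma_2/\sigma_3$-sectors respectively.

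Next I would carry out the integral $\int \widetilde{ch}((m,n))\cdot\prod_i\widetilde{Td}(\mO(\mD_{\rho_i}))$ over each component. The untwisted sector $\mH_r^{ab}$ is two-dimensional and yields the polynomial part: the three terms $\tfrac{1+n}{2a}+\tfrac{1+n}{2b}+\tfrac{(1+n)m}{ab}$ come from pairing $e^{c_1}$ with the Todd class of the toric tangent bundle, and $-\tfrac{n(n+1)r}{2ab}$ from the self-intersection of the fibre class (twisted by $r$). The sectors $\mX_{\rho_1}$ and $\mX_{\rho_3}$ are one-dimensional stacky curves; integrating over them leaves a factor $\tfrac{n+1}{b}$, resp. $\tfrac{n+1}{a}$, times the normal weight $\tfrac{1}{1-\omega_p^{-al}}$, resp. $\tfrac{1}{1-\omega_q^{-bl}}$, and the Chern-character prefactor --- exactly the two one-variable sums. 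The sectors attached to $\sigma_1,\dots,\sigma_4$ are zero-dimensional gerbes, so the integral is pure evaluation; here, for each fixed $l$, the $\sigma_1$- and $\sigma_4$-contributions (both $\mu_b$-gerbes) combine into the Atiyah--Bott localization of $\chi$ along the stacky $\mP^1$-fibre of $\mH_r^{ab}\to\mP(a,b)$ over the corresponding $\mZ/b$-point, producing the geometric factor $\tfrac{1-\omega_b^{-(n+1)sal}}{1-\omega_b^{-sal}}$, while the constraint $\tfrac bp\nmid l$ discards the box elements already counted along $\rho_1$; the pair $\sigma_2,\sigma_3$ gives the last sum the same way. Adding the four sectors yields the displayed formula.

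For the special values I would substitute and simplify. With $(m,n)=(0,0)$ all root-of-unity prefactors equal $1$; using $\sum_{l=1}^{k-1}(1-\xi^l)^{-1}=\tfrac{k-1}{2}$ for a primitive $k$-th root $\xi$, and the observation that the $\rho_1$-sum together with the $\tfrac bp\nmid l$ part of the $\sigma$-sum reassembles the full sum $\sum_{l=1}^{b-1}(1-\omega_b^{-al})^{-1}$ (and likewise on the $a$-side), the $b$-side contributes $\tfrac12$ and the $a$-side $\tfrac12$, so $\chi(\mO)=1$. For $\mO(\mD_x)$ and $\mO(\mD_z)$, whose classes have vanishing second coordinate and first coordinate $\pm1$, the only change from $\mO$ is the appearance of $\tfrac{m}{ab}$ and of the characters $\omega_\bullet^{ml}$; a finite-Fourier (Fourier--Dedekind) identity shows this $m$-dependent correction vanishes at $m=\pm1$, so the value stays $1$. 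For $\mO(\mD_y)$, of class $(0,1)$, the term $-\tfrac{n(n+1)r}{2ab}$ becomes $-\tfrac{r}{ab}$; writing $r=uab-v_1a-v_2b$ with $0\le v_1<b$, $0\le v_2<a$ isolates the integer $-u$, while the fractional remainder $\tfrac{v_1}{b}+\tfrac{v_2}{a}$ combines with the other fractional and root-of-unity sums (which at $n=1$ pick up an extra $1+\omega_b^{-sal}$, resp. $1+\omega_a^{-tbl}$, from the geometric factor) to give $2$; hence $\chi(\mO(\mD_y))=2-u$.

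The step I expect to be the real obstacle is the twisted-sector bookkeeping in the second paragraph: fixing every eigenvalue correctly, selecting the right branch of $\widetilde{Td}$ for each of the four divisors on each component, and --- most delicately --- reconciling the six twisted components of the table with the four sums in the statement, i.e. showing that the $\sigma_1$/$\sigma_4$ (resp. $\sigma_2$/$\sigma_3$) contributions telescope for each $l$ into a single summand carrying $\tfrac{1-\omega_b^{-(n+1)sal}}{1-\omega_b^{-sal}}$ (resp. $\tfrac{1-\omega_a^{-(n+1)tbl}}{1-\omega_a^{-tbl}}$) with no double-counting of the $\rho_1$/$\rho_3$ box elements. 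Once this is pinned down, the surface, curve, and point integrals are routine, and the special-value identities reduce to elementary root-of-unity manipulations.
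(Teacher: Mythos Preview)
Your approach is essentially the paper's: apply the orbifold Riemann--Roch formula component by component over $I\mH_r^{ab}$, compute $\widetilde{ch}((m,n))$ and the product of the four $\widetilde{Td}$ factors on each of the seven sector types (the main stack, $\rho_1$, $\rho_3$, $\sigma_1,\dots,\sigma_4$), integrate, and then combine the $\sigma_1/\sigma_4$ and $\sigma_2/\sigma_3$ contributions into the two displayed sums. The paper does not phrase that combination as Atiyah--Bott localisation along the fibre, but the algebra is the same: for each admissible $l$ the two point contributions over $\mu_b$ add to $\omega_b^{ml}\cdot\dfrac{1-\omega_b^{-(n+1)sal}}{1-\omega_b^{-sal}}\cdot\dfrac{1}{b(1-\omega_b^{-al})}$, and likewise over $\mu_a$.

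One correction for the special values: the classes of $\mO(\mD_x)$ and $\mO(\mD_z)$ in $\mathrm{Pic}(\mH_r^{ab})\cong\mZ^2$ are $(a,0)$ and $(b,0)$, not $(\pm 1,0)$. (In the K-group presentation $x\mapsto g^a$, $z\mapsto g^b$.) So your ``$m$-dependent correction vanishes at $m=\pm1$'' argument targets the wrong points. The paper instead substitutes $m=a$ (resp.\ $m=b$), $n=0$ and reduces everything with the two elementary identities
\[
\sum_{l=1}^{k-1}\frac{1}{1-\xi^{cl}}=\sum_{l=1}^{k-1}\frac{1}{1-\xi^{l}}\quad(\gcd(c,k)=1),\qquad \frac{1}{1-\xi^{l}}+\frac{1}{1-\xi^{-l}}=1,
\]
which is what you should use here. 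For $\mO(\mD_y)=(0,1)$ the paper's route is also slightly different from yours: it rewrites $-\tfrac{r}{ab}=-\tfrac{s}{b}-\tfrac{t}{a}$, chooses $s\ge 0$, $t\le 0$, sets $s=u_1b-v_1$, $t=-u_2a-v_2$ (so $u=u_1-u_2$), and evaluates the two $\sigma$-sums directly via $\sum_{l,\,\frac{b}{p}\nmid l}^{b-1}\frac{1-\omega_b^{sl}}{1-\omega_b^{l}}=-s+bu_1$ (and the analogous identity on the $a$-side), rather than splitting off $-u$ from $-\tfrac{r}{ab}$ first. Either organisation works once the correct Picard classes are in place.
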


\begin{proof}
	The only $2$-dimensional component of $I\mH_r^{ab}$ is $\mH_r^{ab}$ itself. By \cite{EM13} and \cite{CLS11}, the orbifold Chow ring is 
	$$\mQ[x,y,z,w]/(xz,yw,bx-az,sx+y+tz-w) \cong \mQ[x,y]/(x^2,ay^2+rxy)$$ 
	and $ \int_{\mH_r^{ab}} xy= \frac{1}{b}.$ 
	
	The $1$-dimensional components come from $\rho_1$ and $\rho_3$. By \cite{BCS05}, the substack $[Z^g/G]$ for $\rho_1$ is isomorphic to the substack constructed from the quotient stacky fan $\Sigma / \rho_1$ \cite{BCS05}. One can show that $Z(\rho_1) \cong \mC^2- V(y,w)$ and the action of $G(\rho_1) \cong \mC^* \times \mu_b$ on $Z(\rho_1)$ is given by $(\lambda,\zeta)(y,w) = (\lambda y, \lambda \zeta^s w).$
	Hence the Chow ring is $\mQ[y]/(y^2)$ and 
	$\int_{\mX_{\rho_1}} y= \frac{1}{b}.$ 
	
	Similarly, the Chow ring is $\mQ[y]/(y^2)$ for another type of $1$-dimensional components and $\int_{\mX_{\rho_3}} y= \frac{1}{a}.$ 
	
	There are $4$ types of $0$-dimensional components induced by $\sigma_i$. Two of them are isomorphic to $B\mu_b$, and others $B\mu_a$. The Chow ring is $\mQ$ and 
	$ \int_{B\mu_b} 1= \frac{1}{b}, \int_{B\mu_a} 1= \frac{1}{a}.$
	
	Thus $I\mH_r^{ab}$ is the disjoint union of $7$ types of components in general. Depending on the relations among $a,b$ and $r$, there may be fewer types.
	
	On each type of components, the Chern character of a line bundle $\widetilde{ch}((m,n))$ is given by
	\begin{multline*}
	\left( 1+(\frac{m}{a}x+ny)+\frac{1}{2}(\frac{m}{a}x+ny)^2,(1+ny)\omega_p^{ml}, (1+ny)\omega_q^{ml}, \omega_b^{(m-nsa)l}, \omega_a^{(m-ntb)l}, \omega_a^{ml}, \omega_b^{ml} \right).
	\end{multline*}
	Note that $l$ runs over $\{1,...,p-1\}$ for the $2$nd type, $\{1,...,q-1\}$ for the $3$rd type, $\{1,...,b-1 ; \frac{b}{p} \nmid l \}$ for the $4$th and $7$th types, $\{1,...,a-1; \frac{a}{q} \nmid l\}$ for the $5$th and $6$th types.
	
	One can also show that $\widetilde{Td}(\mO(\mD_{\rho_1})) \cdot \widetilde{Td}(\mO(\mD_{\rho_2})) \cdot \widetilde{Td}(\mO(\mD_{\rho_3})) \cdot \widetilde{Td}(\mO(\mD_{\rho_4}) )$ on each type of components is
	\begin{multline*}
	\left(  1+y+(\frac{b}{2a}+\frac{r}{2a}+\frac{1}{2})x + (\frac{b}{2a}+\frac{1}{2})xy,\frac{1+y}{1-\omega_p^{-al}}, \frac{1+y}{1-\omega_q^{-bl}}, \right. \\
	\left.
	 \frac{1}{(1-\omega_b^{-al})(1-\omega_b^{sal})}  ,   \frac{1}{(1-\omega_a^{-bl})(1-\omega_a^{tbl})} ,
	\frac{1}{(1-\omega_a^{-bl})(1-\omega_a^{-tbl})} , 
	\frac{1}{(1-\omega_b^{-al})(1-\omega_b^{-sal})} \right).
	\end{multline*}
	
	Adding all the integrals together, we get the desired result.
	
	To show $\chi(\mO_{\mH_r^{ab}})=\chi(\mO(\mD_{\rho_1}))=\chi(\mO(\mD_{\rho_3}))=1$, we repeatedly use the following two facts: 
	\begin{itemize}
		\item If $a,p$ are coprime, 
		$\ds \sum_{l=1}^{p-1}  \frac{1}{1-\omega_p^{-al}} 
		= \sum_{l=1}^{p-1}  \frac{1}{1-\omega_p^{l}}$.
		\item $\ds \frac{1}{1-\omega_p^{l}} + \frac{1}{1-\omega_p^{-l}} =1$.
	\end{itemize}
\end{proof}

\subsection{Coarse Moduli Space} ~\\

Suppose gcd$(a,b)$=1. The coarse moduli space of the Hirzebruch orbifold $\mH_r^{ab}$ is a toric variety $\mathrm{H}$ given by the following fan
$$
\begin{tikzpicture}
\filldraw[black!25!white] (0,0) -- (0,2) -- (-2,2) -- (-2,1);
\filldraw[black!50!white] (0,0) -- (3,2) -- (0,2);
\filldraw[black!5!white] (0,0) -- (-2,1) -- (-2,-1) -- (0,-1);
\filldraw[black!20!white] (0,0) -- (3,2) -- (3,-1) -- (0,-1);
\draw[thin,->] (-2.5,0) -- (3.5,0) node[anchor=west] {x};
\draw[thin,->] (0,-1) -- (0,2.5) node[anchor=south] {y};
\draw[thick,->] (0,0) -- (3,2);
\draw[thick,->] (0,0) -- (0,1);
\draw[thick,->] (0,0) -- (0,-1);
\draw[thick,->] (0,0) -- (-2,1);
\node[right] at (3,2) {$\rho_1=(b/p,s/p)$};
\node[left] at (-2,1) {$\rho_3=(-a/q,t/q)$};
\node[above] at (0,1) {$\rho_2=(0,1)$};
\node[below] at (0,-1) {$\rho_4=(0,-1)$};
\node at (1.5,1.5) {$\sigma_1$};
\node at (-1.5,1.5) {$\sigma_2$};
\node at (-1.5,-0.5) {$\sigma_3$};
\node at (1.5,-0.5) {$\sigma_4$};
\end{tikzpicture}
$$  
where $r=sa+bt, p=\text{gcd}(b,r)$ and $q=\text{gcd}(a,r)$. Since $\text{gcd}(a,b)=1$, $\frac{s}{p}$ and $\frac{t}{q}$ are integers. Let $D_i$ be the divisor corresponding to the ray $\rho_i$. Then
$$\left\{ \begin{array}{l}
\ds \frac{b}{p} D_1 \sim \frac{a}{q} D_3 \\
\ds \frac{s}{p} D_1 + D_2 + \frac{t}{q} D_3 \sim D_4.
\end{array} \right.$$

To find the Picard group, we need to determine when a Weil divisor is Cartier. Suppose $D= t_1 D_1  + t_2 D_2 $ is Cartier. Denote by $n_\rho$ the primitive generator of the ray $\rho$. Then for each $\sigma_i$, there exists $m_{\sigma_i}=(x_i,y_i)$ such that $ \langle m_{\sigma_i}, n_\rho \rangle =-t_\rho$ for all $\rho \in \sigma_i(1)$, where $\sigma_i(1)$ denotes the collection of rays of $\sigma_i$ \cite{CLS11}.  

For $\sigma_1$, it implies 
$$ \left\{ \begin{array}{l}
\ds \frac{b}{p} x_1 +\frac{s}{p} y_1 = -t_1 \\
y_1 = -t_2
\end{array}   \right.
$$  
from which we get $\frac{b}{p} \mid -t_1 + \frac{s}{p} t_2$. By checking each $\sigma_i$, one can show that the conditions for $D$ to be Cartier are $\frac{b}{p} \mid t_1, \frac{ba}{pq} \mid t_2$. Therefore
$$ \text{Pic}(\mathrm{H}) \cong \{ t_1 \frac{b}{p} D_1 + t_2 \frac{ba}{pq} D_2 \vert t_1, t_2 \in \mZ\} \cong \{ t_1 \frac{b}{p} D_1 + t_4 \frac{ba}{pq} D_4 \vert t_1, t_4 \in \mZ \} \cong \mZ^2.$$

The Cartier divisor $t_1 \frac{b}{p} D_1 + t_4 \frac{ba}{pq} D_4$ is ample if and only if for each $\sigma_i$, there exists $m_{\sigma_i}=(x_i,y_i)$ such that 
$$ \left\{ \begin{array}{l}
\langle m_{\sigma_i}, n_\rho \rangle =-t_\rho \text{ for all } \rho \in \sigma_i(1) \\
\langle m_{\sigma_i}, n_\rho \rangle > -t_\rho \text{ for all } \rho \in \Sigma(1)/\sigma_i(1). 
\end{array}   \right.
$$ 
One can compute that 
$m_{\sigma_1}=(-t_1, 0), m_{\sigma_2}=(0, 0), m_{\sigma_3}=(\frac{bt}{pq} t_4, \frac{ba}{pq} t_4),  m_{\sigma_4}=(-\frac{sa}{pq}t_4-t_1,\frac{ba}{pq}t_4 )$ is a solution. 

We get several inequalities which reduce to $t_1>0$, $t_4>0$. Thus 
$\mO_{\mathrm{H}}(t_1 \frac{b}{p} D_1 + t_4 \frac{ba}{pq} D_4)$ is ample if and only if $t_1, t_4>0$.

Consider the ample line bundle $L=\mO_{\mathrm{H}}(\frac{b}{p} D_1 + \frac{ba}{pq} D_4)$. By the property of the root stack \cite{FMN10}, $\epsilon: \mH_r^{ab} \to \mathrm{H}$
is a morphism with divisor multiplicities $(p,1,q,1)$. Hence 
$$\epsilon^* L = \mO_{\mH_r^{ab}}(b\mD_{\rho_1} + \frac{ba}{pq}\mD_{\rho_4}) \cong (ba(1+\frac{r}{pq}) , \frac{ba}{pq}) \in \text{Pic}(\mH_r^{ab}).$$

For any coherent sheaf $\mF$ on $\mH_r^{ab}$, we can then define the Hilbert polynomial of $\mF$ with respect to $\epsilon^* L$ as
$$P(\mF,T) := \chi (\mF \otimes \epsilon^* L^T ).$$

\begin{proposition}
	Suppose $\normalfont{\text{gcd}}(a,b)=1$. Consider the line bundle $(m,n) \in \normalfont{\text{Pic}}(\mH_r^{ab})$, then
	$$\begin{array}{ll}
	P((m,n),T) & = \ds  \left( \frac{bar}{2p^2q^2} + \frac{ba}{pq}  \right) T^2 + \left( \frac{a+b+2m+r}{2pq} + n+1 + \right. \\
	& + \ds \left. \sum_{l=1}^{p-1} \frac{\omega_p^{ml}}{1-\omega_p^{-al}} \frac{a}{pq} + \sum_{l=1}^{q-1}  \frac{\omega_q^{ml}}{1-\omega_q^{-bl}} \frac{b}{pq} \right) T + \chi ( (m,n) ).
	\end{array}
	$$
\end{proposition}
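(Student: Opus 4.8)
The plan is to substitute the line bundle $(m,n)\otimes\epsilon^*L^T$ into the Euler-characteristic formula of the previous proposition and collect powers of $T$. Since $\text{Pic}(\mH_r^{ab})\cong\mZ\oplus\mZ$ is a group and $\epsilon^*L\cong\big(ba(1+\tfrac{r}{pq}),\tfrac{ba}{pq}\big)$, we have $(m,n)\otimes\epsilon^*L^T=\big(m+ba(1+\tfrac{r}{pq})T,\;n+\tfrac{ba}{pq}T\big)$; here $pq\mid r$ because $p\mid r$, $q\mid r$ and $\gcd(p,q)=1$, so for $T\in\mZ$ these are genuine integers and $P((m,n),T)$ is the polynomial in $T$ obtained by the substitution $m\mapsto m+ba(1+\tfrac{r}{pq})T$, $n\mapsto n+\tfrac{ba}{pq}T$ in $\chi((m,n))$.

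First I would show the root-of-unity sums are essentially inert under this substitution. In the $\sum_{l=1}^{p-1}$ and $\sum_{l=1}^{q-1}$ terms the shift enters only through the phases $\omega_p^{ml}$, $\omega_q^{ml}$ and through the explicit factor $n+1$; since $p\mid b$ and $q\mid a$, the $m$-shift $ba(1+\tfrac{r}{pq})T$ is divisible by $p$ (resp.\ $q$), so these phases are unchanged, and the two sums contribute, beyond their original constant values, only the linear terms $\sum_{l=1}^{p-1}\frac{\omega_p^{ml}}{1-\omega_p^{-al}}\tfrac{a}{pq}\,T$ and $\sum_{l=1}^{q-1}\frac{\omega_q^{ml}}{1-\omega_q^{-bl}}\tfrac{b}{pq}\,T$. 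For the $\sigma_1$- and $\sigma_2$-type sums I would invoke the finer divisibilities $p\mid s$ and $q\mid t$ (valid because $p=\gcd(s,b)$ and $q=\gcd(t,a)$): then $b\mid ba(1+\tfrac{r}{pq})T$, $a\mid ba(1+\tfrac{r}{pq})T$, and $pq\mid sa$, $pq\mid tb$ imply $b\mid\tfrac{ba}{pq}sal\,T$ and $a\mid\tfrac{ba}{pq}tbl\,T$, so every phase $\omega_b^{ml}$, $\omega_b^{-(n+1)sal}$, $\omega_a^{ml}$, $\omega_a^{-(n+1)tbl}$ is invariant; hence these sums are $T$-independent and feed only into the constant term.

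It then remains to expand the honestly polynomial part $\tfrac{1+n}{2a}+\tfrac{1+n}{2b}+\tfrac{(1+n)m}{ab}-\tfrac{n(n+1)r}{2ab}$ under the substitution. The $T^2$-coefficient comes only from $\tfrac{(1+n)m}{ab}$ and $-\tfrac{n(n+1)r}{2ab}$ and, after simplification, equals $\tfrac{bar}{2p^2q^2}+\tfrac{ba}{pq}$. The $T^1$-coefficient collects the linear-in-$T$ parts of the polynomial piece (after the $r$-contributions telescope, this gives $\tfrac{a+b+2m+r}{2pq}+n+1$) together with the two Dedekind-type linear terms found above. The $T^0$-coefficient is, tautologically, $\chi((m,n))$ from the previous proposition. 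Assembling the three coefficients yields the stated formula.

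The only real obstacle is the bookkeeping around the phases: one must confirm that the shift $\epsilon^*L^T$ genuinely kills the $T$-dependence of the $\sigma_1$- and $\sigma_2$-sums, which is exactly where the identities $p=\gcd(s,b)$, $q=\gcd(t,a)$ (hence $p\mid s$, $q\mid t$ and $pq\mid sa$, $pq\mid tb$) are needed; the rest is routine polynomial algebra.
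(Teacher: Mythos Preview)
Your proposal is correct and follows essentially the same approach as the paper: substitute $(m,n)\mapsto\big(m+ba(1+\tfrac{r}{pq})T,\;n+\tfrac{ba}{pq}T\big)$ into the Euler-characteristic formula, verify that the $\sigma_1$/$\sigma_2$-type root-of-unity sums are $T$-independent, and expand the rest. The paper's proof is in fact much terser than yours---it simply records the identities $\omega_b^{\frac{ba}{pq}sa}=\omega_b^{\frac{ba}{pq}(r-tb)}=1$ and $\omega_a^{\frac{ba}{pq}tb}=\omega_a^{\frac{ba}{pq}(r-sa)}=1$ (using $r=sa+tb$ and $pq\mid r$) rather than arguing via $p\mid s$, $q\mid t$ as you do, but either route establishes the needed invariance and the remaining algebra is identical.
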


\begin{proof}
	To calculate $\chi ( (m+ ba(1+\frac{r}{pq}) T,n+  \frac{ba}{pq} T ) )$, we note that $$\ds \omega_b^{\frac{ba}{pq} sa} = \omega_b^{\frac{ba}{pq} (r-tb)}=1,  \omega_a^{\frac{ba}{pq} tb} = \omega_a^{\frac{ba}{pq} (r-sa)}=1.$$
	Then the result follows.
\end{proof}

\subsection{Modified Hilbert Polynomial} ~\\

By \cite{OS03} \cite{Nir08}, A locally free sheaf $\mE$ on Hirzebruch orbifold $\mH_r^{ab}$ is a generating sheaf if for every geometric point $x$ of $\mH_r^{ab}$, the representation $\mE_x$ of the stabilizer group at that point contains every irreducible representation.

One can show that $ \mE = \bigoplus_{k=0}^{ab-1} (-k,0)$ is a generating sheaf, although is not of minimal rank usually. Let $\epsilon: \mH_r^{ab} \to \mathrm{H}$ be the structure morphism. Fix the generating sheaf $\mE$ as above and the ample invertible sheaf $ L=\mO(\frac{b}{p} D_1 + \frac{ba}{pq} D_4)$. We define the modified Hilbert polynomial for a sheaf $\mF$ on the Hirzebruch orbifold $\mH_r^{ab}$ as
$$P_\mE (\mF, T) =\chi (\mH_r^{ab}, \mF \otimes \mE^\vee \otimes \epsilon^* L^T )$$ \label{34}
and the modified Euler characteristic as 
$$\chi_\mE (\mF)= P_\mE (\mF, 0).$$

\begin{proposition}
	Suppose $\normalfont{\text{gcd}}(a,b)=1$. Then
	$$\begin{array}{ll}
	P_\mE ((m,n), T) & \ds = \left( \frac{b^2a^2r}{2p^2q^2} + \frac{b^2a^2}{pq}  \right) T^2 + \Big( \frac{ab}{2pq}(a+b+r+2m-1+ab) + ab(n+1) \Big) T  \\
	& \ds + \frac{1+n}{2}(a+b+2m+ab-1-nr).
	\end{array}$$
\end{proposition}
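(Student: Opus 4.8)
The plan is to deduce this from the ordinary Hilbert polynomial of the previous proposition, using that $\mE$ is a direct sum of line bundles and that $\chi$ is additive on direct sums.

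First I would unwind the definitions. Since $\mE=\bigoplus_{k=0}^{ab-1}(-k,0)$, we have $\mE^\vee=\bigoplus_{k=0}^{ab-1}(k,0)$, and hence $(m,n)\otimes\mE^\vee=\bigoplus_{k=0}^{ab-1}(m+k,n)$ in $\text{Pic}(\mH_r^{ab})$. Therefore
$$P_\mE((m,n),T)=\chi\!\left(\bigoplus_{k=0}^{ab-1}(m+k,n)\otimes\epsilon^{*}L^{T}\right)=\sum_{k=0}^{ab-1}P\big((m+k,n),T\big),$$
so it suffices to sum the explicit formula of the previous proposition over the arithmetic progression $m,m+1,\dots,m+ab-1$.

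Next I would carry out this summation termwise. The $T^{2}$-coefficient and the summand $n+1$ inside the $T^{1}$-coefficient do not depend on $m$, so they are simply multiplied by $ab$; the terms linear in $m$ contribute $\sum_{k=0}^{ab-1}(m+k)=abm+\tfrac{ab(ab-1)}{2}$, which is the source of the extra $ab-1$ that appears inside the coefficients of the target formula. The only step that is not pure bookkeeping is the vanishing of all the root-of-unity corrections: in both $P((m+k,n),T)$ and $\chi((m+k,n))$ every $m$-dependent piece is multiplied by a character value $\omega_d^{(m+k)l}$ with $d\in\{p,q,a,b\}$, and because each of $p,q,a,b$ divides $ab$ while $\omega_d^{l}\neq 1$ in the relevant summation ranges, $\sum_{k=0}^{ab-1}\omega_d^{(m+k)l}=\omega_d^{ml}\sum_{k=0}^{ab-1}\omega_d^{kl}$ runs over $ab/d$ full periods of a nontrivial character and equals $0$. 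Thus all $\omega$-terms disappear and the result is genuinely a polynomial in $T$.

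Finally I would collect the three coefficients: the $T^{2}$-coefficient becomes $\tfrac{b^{2}a^{2}r}{2p^{2}q^{2}}+\tfrac{b^{2}a^{2}}{pq}$; combining $\tfrac{ab(a+b+2m+r)}{2pq}+\tfrac{ab(ab-1)}{2pq}$ with $ab(n+1)$ yields the stated $T^{1}$-coefficient $\tfrac{ab}{2pq}(a+b+r+2m-1+ab)+ab(n+1)$; and $\sum_{k=0}^{ab-1}\chi((m+k,n))$ produces the constant term, using $\tfrac{b(1+n)}{2}+\tfrac{a(1+n)}{2}$ from the first two summands of $\chi$, $(1+n)\big(m+\tfrac{ab-1}{2}\big)$ from the $\tfrac{(1+n)m}{ab}$ summand, and $-\tfrac{n(n+1)r}{2}$ from the last polynomial summand, which assemble into $\tfrac{1+n}{2}(a+b+2m+ab-1-nr)$. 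I do not anticipate a real obstacle here; the one point deserving care is the vanishing of the character sums, which rests on the divisibilities $p\mid b$ and $q\mid a$, hence $p,q\mid ab$.
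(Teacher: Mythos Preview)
Your proposal is correct and follows essentially the same approach as the paper: expand $P_\mE((m,n),T)=\sum_{k=0}^{ab-1}P((m+k,n),T)$, observe that all root-of-unity correction terms vanish because $\sum_{k=0}^{ab-1}\omega_d^{(m+k)l}=0$ for $d\in\{p,q,a,b\}$ (each dividing $ab$), and then sum the remaining polynomial terms. The paper's proof records only the two key vanishing identities and leaves the coefficient collection implicit, whereas you spell it out in full, but the argument is the same.
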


\begin{proof}
	To prove the proposition, we note that:
	\begin{itemize}
		\item  $\ds \sum_{k=0}^{ab-1} \sum_{l=1}^{q-1} \frac{\omega_p^{(m+k)l}}{1-\omega_p^{-al}}=  \sum_{l=1}^{q-1} \frac{\sum_{k=0}^{ab-1} \omega_p^{(m+k)l}}{1-\omega_p^{-al}}= 0$,
		since $p \mid ab$. 
		\item  $\ds \sum_{k=0}^{ab-1} \sum_{\tiny{\begin{array}{c} l=1 \\ \frac{b}{p} \nmid l \end{array}}}^{b-1} \frac{\omega_b^{(m+k)l}}{1-\omega_b^{-al}} = 0 $.
	\end{itemize}  
	Then the result follows.  
\end{proof}

\begin{proposition}\label{35}
	$$\begin{array}{l}
	P_\mE (\left[ \mO_{P_1} \otimes \hat{\mu}_b^i \right], T) =  P_\mE (\left[ \mO_{P_4} \otimes \hat{\mu}_b^i \right], T) = a, \\
	P_\mE (\left[ \mO_{P_2} \otimes \hat{\mu}_a^i \right], T)= P_\mE (\left[ \mO_{P_3} \otimes \hat{\mu}_a^i \right], T)= b.  
	\end{array}$$
\end{proposition}

\begin{proof}
	Recall from \hyperref[31]{Proposition \ref*{31}} that $[\mO_{P_1} \otimes \hat{\mu}_b^i ]=g^i+g^{a+i} h-g^{a+i}-g^i h$. Hence 
	\begin{multline*}
	P_\mE ([ \mO_{P_1} \otimes \hat{\mu}_b^i ] , T )=P_\mE ((-i,0), T) + P_\mE ((-a-i,-1), T) \\
	- P_\mE ((-a-i,0), T) - P_\mE ((-i,-1), T)= a.
	\end{multline*}	
	Similarly, we can obtain the other results.
\end{proof}

Generally, if there is a $S$-family such that $\hat{F}_2=\hat{F}_3=\hat{F}_4=0$ and $\hat{F}_1$ consists of a single space $\mC$ with $\mu_b$-weight $i$ at the position $(k/a+A_1)(a,0) + A_2(0,1)$, then the $K$-group class of the corresponding sheaf is $(1-g^a)(1-h)g^i$ and $  P_\mE (  \mO_{P_1} \otimes L_{(k+ a A_1 , A_2, i-k-a A_1 + r A_2,0)} , T) =a. $

Thus the modified Hilbert polynomial of a sheaf corresponding to a single space $\mC$ in one chart only depends on the chart itself.

We will now look at the modified Hilbert polynomial of indecomposable locally free toric sheaves of rank $2$ on the Hirzebruch orbifold $\mH_{r}^{ab}$.

\label{3.4} 

Recall that a necessary condition for such a sheaf to be indecomposable is exactly one nonzero box summand for each chart. In this case, we set 
$$\begin{array}{c}
B_1=k+aA_1, B_2=A_2, B_3=j+bA_3, B_4=A_4 \\
\Lambda_1=a \Delta_1, \Lambda_2=\Delta_2, \Lambda_3=b \Delta_3, \Lambda_4=\Delta_4
\end{array}.$$
A locally free toric sheaf of such kind is entirely determined by $B_1,B_2,B_3, B_4 \in \mZ$, $\Lambda_1,\Lambda_2,\Lambda_3, \allowbreak \Lambda_4 \in \mZ_{\geq 0}$ such that $a \mid \Lambda_1, b \mid \Lambda_3$ and $P_1,P_2,P_3,P_4  \subset \mC^2$. It is indecomposable if and only if it satisfies one of the conditions in the first case of \hyperref[27]{Example \ref*{27}}

\begin{proposition} \label{36}
	Let $\mF$ be a locally free toric sheaf of rank $2$ with exactly one nonzero box summand for each chart. Then the modified Hilbert polynomial of $\mF$ is given by 
	$$\begin{array}{ll}
	&   P_\mE \left( (-B_1-B_3-B_4r, -B_2-B_4) \right)  \\   + &   P_\mE \left( (-B_1-\Lambda_1-B_3-\Lambda_3-B_4r-\Lambda_4r, -B_2-\Lambda_2 
	-B_4-\Lambda_4) \right) \\ 
	- &   (1-\delta_{P_1 P_2}) \Lambda_1 \Lambda_2 - (1-\delta_{P_2 P_3}) \Lambda_2 \Lambda_3 - (1-\delta_{P_3 P_4}) \Lambda_3 \Lambda_4 - (1-\delta_{P_4 P_1}) \Lambda_4 \Lambda_1.
	\end{array}$$
	where $\delta_{P_i P_j}$ is $1$ if $P_i = P_j$ and $0$ if $P_i \neq P_j$.
\end{proposition}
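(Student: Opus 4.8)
The plan is to compute the class $[\mF]\in K_0(\mH_r^{ab})$ explicitly and then apply the additive functional $P_\mE(-)$, since $P_\mE(\mF,T)=\chi(\mH_r^{ab},\mF\otimes\mE^\vee\otimes\epsilon^*L^T)$ factors through $K_0(\mH_r^{ab})$. Concretely, it suffices to establish the identity
$$[\mF]=[L_{(B_1,B_2,B_3,B_4)}]+[L_{(B_1+\Lambda_1,B_2+\Lambda_2,B_3+\Lambda_3,B_4+\Lambda_4)}]-\sum_{i=1}^{4}(1-\delta_{P_iP_{i+1}})[\mathcal{T}_i]$$
in $K_0(\mH_r^{ab})$ (indices mod $4$), where $L_{(B_1,\dots,B_4)}$ and $L_{(B_1+\Lambda_1,\dots,B_4+\Lambda_4)}$ are the $\mT$-equivariant line bundles obtained by gluing the ``outer'' corners $(A_i,A_{i+1})$ and the ``inner'' corners $(A_i+\Lambda_i/a\text{ or }\Lambda_i,\ A_{i+1}+\dots)$ of the double filtrations $\hat{F}_i$, and $\mathcal{T}_i$ is a skyscraper of length $\Delta_i\Delta_{i+1}$ at the $\mT$-fixed point of $\sigma_i$. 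Once this holds, one reads off $L_{(B_1,\dots,B_4)}=(-B_1-B_3-rB_4,\,-B_2-B_4)$ from the Remark after the $\mathrm{Pic}^T$-isomorphism ($L_{(1,0,0,0)}=(-1,0)$, $L_{(0,0,1,0)}=(-1,0)$, $L_{(0,1,0,0)}=(0,-1)$, $L_{(0,0,0,1)}=(-r,-1)$), and the two line-bundle terms become precisely the first two summands of the claimed formula via the $P_\mE((m,n),T)$ computed above.

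The first step is a chart-by-chart computation. On $\mU_i$ the $S$-family of $\mF$ is a doubly filtered $\mC^2$ with generator at the corner $(A_i,A_{i+1})$, reaching $\mC^2$ at $(A_i+\Delta_i,A_{i+1}+\Delta_{i+1})$, with intermediate lines $P_i$ (the ``upper-left'' one) and $P_{i+1}$ (the ``lower-right'' one). When $P_i\neq P_{i+1}$, the $P_i$-subsheaf of $\mF|_{\mU_i}$ is the line bundle generated at $(A_i,A_{i+1}+\Delta_{i+1})$ with quotient the line bundle generated at $(A_i+\Delta_i,A_{i+1})$, so $[\mF|_{\mU_i}]=[\mO(A_i,A_{i+1}+\Delta_{i+1})]+[\mO(A_i+\Delta_i,A_{i+1})]$; the elementary Koszul identity $(1-x^{\Delta_i})(1-y^{\Delta_{i+1}})[\mO_{\mC^2}]=[\mC[x,y]/(x^{\Delta_i},y^{\Delta_{i+1}})]$ (in its $\mu_a$- or $\mu_b$-equivariant form, whose fine grading is irrelevant to $P_\mE$) rewrites this as $[\mO(A_i,A_{i+1})]+[\mO(A_i+\Delta_i,A_{i+1}+\Delta_{i+1})]-[\mathcal{T}_i|_{\mU_i}]$ with $\mathcal{T}_i|_{\mU_i}$ of length $\Delta_i\Delta_{i+1}$ at the origin. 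When $P_i=P_{i+1}$ the filtration degenerates: $\mF|_{\mU_i}$ sits in $0\to\mO(A_i,A_{i+1})\to\mF|_{\mU_i}\to\mO(A_i+\Delta_i,A_{i+1}+\Delta_{i+1})\to0$, yielding the same formula with the correction deleted, i.e. with the factor $1-\delta_{P_iP_{i+1}}$ in both cases. I then check, using $\mathrm{Pic}^T(\mH_r^{ab})\cong\mZ^4$ and the bookkeeping $B_1=i+aA_1,\ B_2=A_2,\ B_3=j+bA_3,\ B_4=A_4,\ \Lambda_1=a\Delta_1,\ \Lambda_2=\Delta_2,\ \Lambda_3=b\Delta_3,\ \Lambda_4=\Delta_4$, that the outer corners glue to $L_{(B_1,B_2,B_3,B_4)}$ and the inner corners to $L_{(B_1+\Lambda_1,\dots,B_4+\Lambda_4)}$; since the four $\mathcal{T}_i$ are supported at the four distinct fixed points, the local $K$-identities assemble to the displayed global one, exactly in the spirit of the proof of Proposition \ref{31}.

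Finally I would evaluate. By Proposition \ref{45} (and the remark that the modified Hilbert polynomial of a single $\mC$ supported in one chart depends only on the chart), the contribution to $P_\mE$ of a skyscraper at the $\mT$-fixed point of $\sigma_i$ is $a$ for $i\in\{1,4\}$ and $b$ for $i\in\{2,3\}$, independent of the fine grading; hence $P_\mE(\mathcal{T}_i)$ equals $a\,\Delta_i\Delta_{i+1}$ or $b\,\Delta_i\Delta_{i+1}$, which in $\Lambda$-coordinates is $\Lambda_i\Lambda_{i+1}$ in every one of the four cases, and summing produces the stated expression. \textbf{The main obstacle} is the chart-local step together with the globalization: pinning down the precise length and support of each $\mathcal{T}_i$, treating the dichotomy $P_i=P_{i+1}$ versus $P_i\neq P_{i+1}$ uniformly, and arguing that the local identities patch to a global $K$-class even though $\mF$ itself, being indecomposable, need not admit a global filtration by line bundles; the interaction between the stacky groups ($\mu_a$ on $\mU_2,\mU_3$ versus $\mu_b$ on $\mU_1,\mU_4$) and the rescalings $\Lambda_i=a\Delta_i$ or $\Lambda_i=\Delta_i$ is exactly where the accounting must be handled with care.
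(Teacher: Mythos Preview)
Your proposal is correct and reaches the same $K$-theoretic identity, but the paper takes a shorter route that dissolves precisely the obstacle you flag. Rather than producing chart-local short exact sequences and then worrying about patching them into a global $K_0$-identity, the paper invokes \cite[Lemma 7.7]{GJK12}, which says that for toric sheaves on a toric DM stack the class $[\mF]\in K_0$ depends only on the dimension function $(i,l_1,l_2,m)\mapsto\dim\,_{b}F_i(l_1,l_2)_m$. One therefore defines an auxiliary (decomposable) sheaf $\mG$ with the same dimension function as $\mF$: on the region $l_1\geq A_i+\Delta_i$ or $l_2\geq A_{i+1}+\Delta_{i+1}$ (and also on the rectangle when $P_i=P_{i+1}$) one sets $\hat G_i=L_{(B_1,\dots,B_4)}\oplus L_{(B_1+\Lambda_1,\dots,B_4+\Lambda_4)}$, and on the remaining rectangles (those with $P_i\neq P_{i+1}$) one sets $\hat G_i=0$. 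Then $[\mF]=[\mG]$ holds globally by the lemma, and $[\mG]$ is visibly $[L_{(B_1,\dots,B_4)}]+[L_{(B_1+\Lambda_1,\dots,B_4+\Lambda_4)}]$ minus the four rectangle contributions, each worth $\Lambda_i\Lambda_{i+1}$ by Proposition~\ref{45}.

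In comparison, your chart-by-chart Koszul computation is a concrete proof of the same $K$-identity that avoids citing an external lemma, at the cost of the globalization step. That step can in fact be carried out (e.g.\ by picking a global line $Q\subset\mC^2$ and using the global short exact sequence $0\to\mF_Q\to\mF\to\mF/\mF_Q\to0$, where $\mF_Q$ is the rank-one torsion-free subsheaf $Q\cap F_i(l_1,l_2)$, then iterating), but the dimension-function lemma makes it unnecessary.
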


\begin{proof}
	We can define another toric sheaf $\mG$ such that its $S$-family $\hat{G}$ satisfies
	$$\dim(_b G_i(l_1,l_2)_m)=\dim(_b F_i(l_1,l_2)_m)$$
	for all charts. Then according to \cite[Lemma 7.7]{GJK17}, $[\mF]=[\mG] \in K_0(\mH_{r}^{ab}).$
	
	To define the $S$-family $\hat{G}$, we set $$ _b G_i(l_1, l_2) := \, _b L_{(B_1,B_2, B_3, B_4),i} (l_1,l_2) \oplus \, _b  L_{(B_1+\Lambda_1,B_2+\Lambda_2,B_3+\Lambda_3,B_4+\Lambda_4),i} (l_1,l_2) $$
	in the following regions
	$$\begin{array}{ll}
	l_1 \geq A_i + \Delta_i  \text{ or } l_2 \geq A_{i+1} + \Delta_{i+1}, \\
	l_1 < A_i + \Delta_i  \text{ and } l_2 < A_{i+1} + \Delta_{i+1}, \text{ if } P_i = P_{i+1}
	\end{array}$$
	for $1 \leq i \leq 4$. Note that if $P_i \neq P_{i+1}$, then a rectangle of size $\Delta_i \Delta_{i+1}$ is removed. Hence the modified Hilbert polynomial is decreased by $ \Lambda_i \Lambda_{i+1}$.
\end{proof}

\section{Moduli Space of Sheaves} 

\subsection{Moduli Functor} ~\\

Suppose the modified Hilbert polynomial of a pure coherent sheaf $\mF$ of dimension $d$ is 
$$P_\mE(\mF, T)= \sum_{i=0}^{d} \alpha_{\mE,i}(\mF) \frac{T^i}{i!}.$$
Then the reduced modified Hilbert polynomial is defined as 
$$p_\mE(\mF, T) = \ds \frac{P_\mE(\mF, T)}{\alpha_{\mE,d}(\mF)}$$ 
and the slope of $\mF$ is defined as 
$$\mu_{\mE}(\mF)=\frac{\alpha_{\mE,d-1}}{\alpha_{\mE,d}}.$$

\begin{definition} 
	$\mF$ is Gieseker-stable if $p_{\mE}(\mF') < p_{\mE}(\mF)$ for every proper subsheaf $\mF' \subset \mF$ \cite{Nir08}. 
\end{definition} 
\begin{definition} 
	$\mF$ is $\mu$-stable if $\mu_{\mE}(\mF') < \mu_{\mE}(\mF)$ for every proper subsheaf $\mF' \subset \mF$.  
\end{definition}

For toric varieties or orbifolds, we only need to check all the equivariant subsheaves for stability. It was proved for Gieseker stability in \cite{Koo11}. For $\mu$-stability, it was only shown in \cite{Koo11} for equivariant reflexive sheaves and recently extended to equivariant torsion-free sheaves in \cite{BDGP18}.
\label{4.1} 

We can then define a moduli functor $\underline{\mM}_{P_\mE}^{s}$, where $\underline{\mM}_{P_\mE}^{s}(S)$ is the set of equivalent classes of   $S$-flat families of Gieseker stable torsion-free sheaves on the Hirzebruch orbifold $\mH_{r}^{ab}$ with the modified Hilbert polynomial $P_\mE$. It is shown in \cite{Nir08} that there exists a quasi-projective scheme $\mM_{P_\mE}^s$ that corepresents $\underline{\mM}_{P_\mE}^{s}$ and is indeed a coarse moduli space. The closed points of $\mM_{P_\mE}^s$ are therefore in bijection with isomorphism classes of Gieseker stable torsion free sheaves on $\mH_{r}^{ab}$ with the modified Hilbert polynomial $P_\mE$.

We also define a moduli functor $\underline{\mM}_{P_\mE}^{\mu s} \subset \underline{\mM}_{P_\mE}^{s}$ which only consists of $\mu$-stable locally free shaves. The coarse moduli space is an open subset $\mM_{P_\mE}^{\mu s} \subset \mM_{P_\mE}^s$.

To get similar results of \cite[Theorem 4.15]{Koo11}, we need to modify the definition of the characteristic function for $\mH_{r}^{ab}$ and match the GIT stability with the Gieseker stability.

\begin{definition}
	Suppose $\normalfont{\text{gcd}}(a,b)=1$. Let $\mF$ be a torsion free sheaf on the Hirzebruch orbifold $\mH_{r}^{ab}$. The characteristic function $\vec{\chi}_\mF$ is defined as the disjoint union
	$$\vec{\chi}_\mF = \coprod_{k=0}^{a-1} \coprod_{j=0}^{b-1} \, _{(k,j)} \vec{\chi}_\mF. $$ 
	where
	$ _{(k,j)} \vec{\chi}_\mF: (\mZ^2)^4  \to \mZ^4 $ is the characteristic function
	$$\begin{array}{rl}
	& \left( _{(k,j)} \chi_\mF^{\sigma_1}(m_1), \, _{(k,j)} \chi_\mF^{\sigma_2}(m_2), \, _{(k,j)} \chi_\mF^{\sigma_3}(m_3), \, _{(k,j)} \chi_\mF^{\sigma_4}(m_4)      \right) \\[0.5em]
	=  &  \left(\dim_\mC( _{(k/a,0)}F_{m_1}^{\sigma_1}), \dim_\mC( _{(0,j/b)} F_{m_2}^{\sigma_2}), \dim_\mC( _{(j/b,0)}  F_{m_3}^{\sigma_3}) 
	, \dim_\mC( _{(0,k/a)} F_{m_4}^{\sigma_4}) \right).
	\end{array}
	$$
	restricted to the following box summand
	$$b_1=(k/a,0), b_2=(0,j/b), b_3=(j/b,0), b_4=(0,k/a).$$
\end{definition} 

Let $\mF$ be a torsion free sheaf of rank $2$, then $\mF$ is $\mu$-stable if and only if $\mF^{**}$ is $\mu$-stable. Since $\mF^{**}$ is locally free, indecomposability of $\mF^{**}$ implies that $S$-family $_{b_i} \hat{F}^{**}_i \neq 0$ for only one box element by \hyperref[27]{Example \ref*{27}}. Hence  $_{b_i} \hat{F}_i \neq 0$ for the same $b_i$. As a result, the characteristic function of a stable sheaf $\mF$ must be of the form 
$$
\ds \vec{\chi}_\mF =  \, _{(k,j)} \vec{\chi}_\mF.
$$

Denote by $Gr(m,n)$ the Grassmannian of $m$-dimensional subspaces of $\mC^n$. We define the following ambient quasi-projective variety:
$$
\mathcal{A} = \ds \coprod_{k=0}^{a-1} \coprod_{j=0}^{b-1} \left( \prod_{i=1}^{4} \prod_{m_i \in \mZ^2}  Gr(_{(k,j)} \chi_\mF^{\sigma_i}(m_i), 2) \right) .
$$
Then there is a locally closed subcheme $\cN_{\vec{\chi}}$ of $\mathcal{A}$ whose closed points are framed \cite{Koo10} torsion-free $S$-families with characteristic function $\vec{\chi}$. Consider the special linear group $G=SL(2,\mC)$.
Then $G$ acts regularly on $\mathcal{A}$ leaving $\cN_{\vec{\chi}}$ invariant. For any $G$-equivariant line bundle $\mL \in \text{Pic}^G(\cN_{\vec{\chi}})$, we can define the GIT stability with respect to $\mL$ \cite{Dol03}. Denote by $\cN_{\vec{\chi}}^s$ the $G$-invariant open subset of GIT stable points. We obtain a geometric quotient $\pi: \cN_{\vec{\chi}}^s \to \mM_{\vec{\chi}}^s=\cN_{\vec{\chi}}^s/G$.

\begin{proposition}
	Let $\vec{\chi}$ be the characteristic function of a torsion free sheaf of rank $2$ on the Hirzebruch orbifold $\mH_{r}^{ab}$. Let $P_\mE$ be the modified Hilbert polynomial with respect to the ample sheaf $L=\mO(\frac{b}{p} D_1 + \frac{ba}{pq} D_4)$ and the generating sheaf $\mE = \bigoplus_{k=0}^{ab-1} (-k,0)$. Then there exists an ample equivariant line bundle $\mL_{\vec{\chi}} \in \normalfont{\text{Pic}}^G(\cN_{\vec{\chi}})$ such that any torsion free sheaf $\mF$ on $\mH_{r}^{ab}$ with characteristic function $\vec{\chi}$ is Gieseker stable if and only if it is GIT stable w.r.t. $\mL_{\vec{\chi}}$. 
\end{proposition}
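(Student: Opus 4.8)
The plan is to follow the GIT construction of M.~Kool \cite[Theorem~4.15]{Koo10}, replacing the numerical bookkeeping of the toric-variety case with the $K$-theoretic and Riemann--Roch computations of Section~3.

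\textbf{Construction of the polarization.} First I would use torsion-freeness of $\mF$: each $\sigma_i$-family of its local data stabilizes for $l_1,l_2\gg 0$, so $\vec{\chi}$ differs from its value ``at infinity'' only on a finite set of weights $m\in\mZ^2$, outside of which every Grassmannian factor $Gr({}_{(i,j)}\chi^{\sigma_k}_\mE(m),2)$ in $\mathcal{A}$ is a point. On each of the finitely many nontrivial factors take the Pl\"ucker bundle $\mO_{Gr}(1)$ with its canonical $SL(2,\mC)$-linearization and set
\[
\mL_{\vec{\chi}}\;=\;\bigotimes_{(i,j),\,k,\,m}\pi_{(i,j),k,m}^{*}\,\mO_{Gr}(1)^{\otimes\, c_{(i,j),k,m}}\Big|_{\cN_{\vec{\chi}}},
\]
where $c_{(i,j),k,m}$ is the contribution of the box summand $b_k$ at weight $m$ to $P_\mE(\mF\otimes\epsilon^{*}L^{T_0})$ for a fixed $T_0\gg 0$. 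The formula for $P_\mE$ proved in Section~3 shows these multiplicities are strictly positive once $T_0$ is large, so $\mL_{\vec{\chi}}$ is ample on $\cN_{\vec{\chi}}$ and $G$-equivariant.

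\textbf{The Hilbert--Mumford computation.} A nontrivial one-parameter subgroup of $G=SL(2,\mC)$ is conjugate to $\lambda(t)=\mathrm{diag}(t^{w},t^{-w})$ with $w>0$, and is governed by a line $0\subset V^{(1)}\subset\mC^{2}$. For $x\in\cN_{\vec{\chi}}$ corresponding to an equivariant torsion-free $S$-family $\hat F$ (hence a sheaf $\mF$), the filtration $V^{(\bullet)}$ induces a saturated equivariant subsheaf $0\subset\mF'\subset\mF$, and $\lim_{t\to 0}\lambda(t)\cdot x$ is the framed $S$-family of $\mF'\oplus\mF/\mF'$; conversely every saturated equivariant rank-$1$ subsheaf arises so. By the generalization of \cite[Prop.~3.19]{Koo11} recalled in Section~\ref{4.1} it is enough, for Gieseker stability, to destabilize with such $\mF'$ (and the reflexive case of $\mu$-stability is \cite[Prop.~4.13]{Koo11}). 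I would then compute $\mu^{\mL_{\vec{\chi}}}(x,\lambda)$ as the $c_{(i,j),k,m}$-weighted sum of Pl\"ucker weights of $\lim\lambda(t)\cdot x$ over the active factors, and use Propositions~\ref{31} and \ref{45} together with the $P_\mE$-formula to repackage the sums of local dimensions of $\hat F$, resp.\ of the induced family of $\mF'$, as $P_\mE(\mF,T_0)$, resp.\ $P_\mE(\mF',T_0)$; this should yield
\[
\mu^{\mL_{\vec{\chi}}}(x,\lambda)\;=\;w\cdot\big(\mathrm{rk}\,\mF\cdot P_\mE(\mF',T_0)-\mathrm{rk}\,\mF'\cdot P_\mE(\mF,T_0)\big)
\]
up to a fixed positive constant, so that $\mu^{\mL_{\vec{\chi}}}(x,\lambda)>0$ for all nontrivial $\lambda$ becomes, for $T_0\gg 0$, exactly the condition $p_\mE(\mF')<p_\mE(\mF)$ for all saturated equivariant $\mF'$, i.e.\ Gieseker stability.

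\textbf{Where the difficulty lies.} The hard part will be the orbifold bookkeeping, which has no analog in \cite{Koo10}: $P_\mE$ receives contributions from the zero-dimensional gerbe components $B\mu_a,B\mu_b$ of the inertia stack and from the twist by $\mE=\bigoplus_{k=0}^{ab-1}(-k,0)$, and I must show these split additively along the flag $V^{(\bullet)}$ and appear unchanged on both sides of the weight identity above. This is precisely what Propositions~\ref{31} and \ref{45} are for: the $K$-class of a single box-space in one chart depends only on its fine grading, and $P_\mE(\mO_{P_i}\otimes\hat\mu^{i})$ is the constant $a$ or $b$, so the box decomposition of $\vec{\chi}$ already encodes all the stacky contributions and they are identical for $x$ and for any $\lambda$-limit; the ``rectangle'' corrections $-(1-\delta_{P_iP_{i+1}})\Lambda_i\Lambda_{i+1}$ in the rank-$2$ modified Hilbert polynomial likewise cancel on both sides. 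Finally I would check that $\cN_{\vec{\chi}}$ is exactly the locus of framed torsion-free $S$-families with characteristic function $\vec{\chi}$, so that the GIT quotient is the expected moduli space, and that a single $T_0$ can be chosen for the finitely many combinatorial types of $\vec{\chi}$ — both routine given the truncation above.
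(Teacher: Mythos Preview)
Your proposal is sound in outline, but the paper's proof is far shorter and rests on a different observation. Rather than redoing the Hilbert--Mumford computation with orbifold corrections, the paper splits on the type of $\vec{\chi}$. When $\vec{\chi}_\mF={}_{(i,j)}\vec{\chi}_\mF$ has exactly one nonzero box summand on every chart, the fine gradings are completely determined by $(i,j)$ (as in Example~2.3), so the local $S$-families are literally double filtrations of $\mC^2$ indistinguishable from those on the ordinary Hirzebruch surface; Kool's matching of GIT and Gieseker stability from \cite{Koo11} then applies verbatim, with no stacky bookkeeping at all. For the remaining three types of $\vec{\chi}$ (two box summands on some charts), the paper invokes Example~\ref{27}: any such rank-$2$ sheaf is decomposable, hence not Gieseker stable, and the equivalence holds vacuously for any ample $\mL_{\vec{\chi}}$.

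So the orbifold difficulty you isolate in your third step --- tracking the inertia-stack and $\mE$-twist contributions through the Hilbert--Mumford weight --- is bypassed entirely: the paper's point is that fixing the box summand trivializes it, not that Propositions~\ref{31} and \ref{45} let you control it. Your uniform approach would work and has the advantage of not relying on the rank-$2$ decomposability analysis of Example~\ref{27}, which would matter in higher rank; the paper's approach buys a one-line reduction to the literature. One caution on your sketch: the claim that the rectangle corrections $-(1-\delta_{P_iP_{i+1}})\Lambda_i\Lambda_{i+1}$ ``cancel on both sides'' is not the right formulation --- they contribute to the constant term of $P_\mE$ and are accounted for by additivity of $P_\mE$ along $0\to\mF'\to\mF\to\mF/\mF'\to 0$, not by any cancellation between $\mF$ and its $\lambda$-limit.
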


\begin{proof}
	If $\vec{\chi}_\mF =  \, _{(k,j)} \vec{\chi}_\mF$, then the $S$-family has exactly one nonzero box summand for each chart. Hence the double filtrations are similar to the cases of toric varieties as in \cite{Koo11} and the proof carries over without any difficulties. 
\end{proof}

\begin{remark}
	For locally free sheaves of rank $2$, we can also match the $\mu$-stability with the GIT stability w.r.t some line bundle $\mL_{\vec{\chi}}^\mu$. But in general, the line bundle $\mL_{\vec{\chi}}^\mu$ is different from $\mL_{\vec{\chi}}$. We denote the GIT quotient w.r.t this line bundle by $\mM_{\vec{\chi}}^{\mu s}$.
\end{remark}

Suppose $\mF$ is a $\mT$-equivariant sheaf on the Hirzebruch orbifold $\mH_{r}^{ab}$. By tensoring a character of $\mT$, the equivariant structure is changed, but not the underlying sheaf. This degree of freedom can be fixed by requiring $B_3=B_4=0$. In this case, we call $\vec{\chi}_\mF$ \textit{gauge-fixed}. Note that our definition is slightly different from \cite{Koo11} as we choose $B_3, B_4$ from $\sigma_4$, which has the largest index, to make the calculation easier. 

By \cite{Koo11}, the Hilbert polynomial of a torsion free toric sheaf on a smooth toric variety is fully determined by the characteristic function of
that sheaf. The result also applies to the Hirzebruch orbifold $\mH_{r}^{ab}$. Therefore, we can write $\mX_{P_\mE}$ for the set of characteristic functions with the modified Hilbert polynomial $P_\mE$. 

Since the $\mT$-action lifts naturally to $M_{P_\mE}^s$, we get the following two theorems similar to \cite{Koo11}.

\begin{theorem}
	For any choice of a generating sheaf $\mE$ with an equivariant structure on the Hirzebruch orbifold $\mH_{r}^{ab}$, there is a canonical isomorphism
	$$(\mM_{P_\mE}^s)^T \cong \coprod_{\vec{\chi} \in (\mX_{P_\mE})^{\text{gf}}} \mM_{\vec{\chi}}^s.$$
\end{theorem} 

Since (geometrically) $\mu$-stability and locally freeness are open properties for the moduli functor $\underline{\mM}_{P_\mE}^{s}$ \cite{Koo11} \cite{HL10}, we obtain 

\begin{theorem} 
	For any choice of a generating sheaf $\mE$ with an equivariant structure on the Hirzebruch orbifold $\mH_{r}^{ab}$, there is a canonical isomorphism 
	$$(\mM_{P_\mE}^{\mu s})^T \cong \coprod_{\vec{\chi} \in (\mX_{P_\mE})^{\text{gf}}} \mM_{\vec{\chi}}^{\mu s} .$$
\end{theorem}

\subsection{Generating Functions} ~\\

\label{4.2} 
Denote the moduli scheme of $\mu$-stable torsion free, resp. locally free, sheaves of rank $R$ with first Chern class $c_1$ and modified Euler characteristic $\chi_\mE$ by $M_{\mH_r^{ab}}(R,c_1,\chi_\mE)$, resp. $M_{\mH_r^{ab}}^{\text{vb}}(R,c_1,\chi_\mE)$. Our goal is to use the idea of fixed point loci to compute the following two generating functions:
$$
\sum_{\chi_\mE \in \mZ} e(M_{\mH_r^{ab}}(R,c_1,\chi_\mE)) \mq^{\chi_\mE}, \quad 
\sum_{\chi_\mE \in \mZ} e(M_{\mH_r^{ab}}^{\text{vb}} (R,c_1,\chi_\mE)) \mq^{\chi_\mE}
$$
for $R=1, 2$ with fixed $c_1$.

\subsubsection{Rank 1} ~\\

Consider $\mu$-stable torsion free toric sheaves of rank $1$ on the Hirzebruch orbifold $\mH_r^{ab}$ with fixed first Chern class $c_1=m\frac{x}{a}+ny$
where $x=c_1(\mD_{\rho_1}), y=c_1(\mD_{\rho_2})$. Let 
$$\mathsf{G}_{c_1}(\mq) = \sum_{\chi_\mE \in \mZ} e(M_{\mH_r^{ab}}(1,c_1, \chi_\mE)) \mq^{\chi_\mE}$$ 
be the generating function. Note that $e(M_{\mH_r^{ab}}(1,c_1, \chi_\mE)=e (M_{\mH_r^{ab}}(1,c_1, \chi_\mE)^\mT)$ by torus localization.  

\begin{proposition}
	$$\mathsf{G}_{m\frac{x}{a}+ny}(\mq)= \mq^{\frac{1+n}{2}(a+b+2m+ab-1-nr)} \prod_{k=1}^{\infty} \frac{1}{(1-\mq^{-ak})^2 (1-\mq^{-bk})^2 }. $$
\end{proposition}

\begin{proof}
	An equivariant line bundle $L_{(B_1,B_2,B_3,B_4)}$ is non-equivariantly trivial if and only if 
	$$B_1 + B_3 + r  B_4 =0 ; B_2 +B_4=0.$$
	If $\mF$ is a torsion free toric sheaf of rank $1$, then $\mF \otimes L_{(B_3+ B_4 r,B_4, -B_3,- B_4)}$ is gauge-fixed. Therefore, we only consider torsion free toric sheaves of rank $1$ with reflexive hulls $L_{(B_1, B_2, 0, 0)}$.
	
	For fixed $c_1$, the reflexive hull is uniquely determined as $L_{(-m, -n, 0, 0)} \cong (m,n)$. The modified Euler characteristic is given by 
	$$\chi_\mE ((m,n))= \frac{1+n}{2}(a+b+2m+ab-1-nr). $$
	
	For a torsion free toric sheaf $\mF$ with the reflexive hull $L_{(-m, -n, 0, 0)}$, the cokernel sheaf $\cQ$ of the exact sequence
	$$0 \to \mF \to L_{(-m, -n, 0, 0)} \to \cQ \to 0$$
	can be described by young diagrams. By \hyperref[35]{Proposition \ref*{35}},  
	the modified Euler characteristic of $\cQ$ increases by $a$, resp. by $b$ for each cell in the young diagrams on charts $\mU_1$ and $\mU_4$, resp. $\mU_2$ and $\mU_3$. Hence the closed points of $M_{\mH_r^{ab}}(1,c_1, \chi_\mE)^T$ are in bijection with four partitions $(\lambda_1, \lambda_2, \lambda_3, \lambda_4)$ such that
	$$\frac{1+n}{2}(a+b+2m+ab-1-nr) - a(\# \lambda_1 + \# \lambda_4) - b(\# \lambda_2 + \# \lambda_3)= \chi_\mE.$$ 
\end{proof} 

\begin{remark}
	By \hyperref[35]{Proposition \ref*{35}} and \hyperref[31]{Proposition \ref*{31}}, the modified Euler characteristic of $\cQ$ is independent of the fine grading, whereas the $K$-group class is not. Hence we do not need to consider the colored Young diagrams as in \cite{GJK17}. 
\end{remark}

\subsubsection{Rank 2} ~\\

For a toric surface, there is a nice expression that relates the generating functions of torsion free and locally free sheaves given by \cite{Got99}. We also derive a similar relation for the Hirzebruch orbifold $\mH_{r}^{ab}$, which is given in \hyperref[1]{Theorem \ref*{1}}.

\begin{proof}[Proof of Theorem 0.1]
	The proof is similar to that of \cite[lemma 7.4]{GJK17} except in our case the moduli scheme is stratified by the modified Euler characteristics.
\end{proof} 

Let $\mF$ be a locally free toric sheaf of rank $2$ on the Hirzebruch orbifold $\mH_r^{ab}$. By tensoring with $L_{(B_3+ r B_4 , B_4, -B_3,- B_4)}$, we only consider toric sheaves with $B_3=B_4=0$, which are gauge-fixed. 

From \hyperref[27]{Example \ref*{27}}, we know that there are three types of indecomposable toric sheaves. Hence, the connected components of the fixed locus $M_{\mH_r^{ab}}^{\text{vb}} (R,c_1,\chi_\mE)^\mT$ can be explicitly classified as follows:

\begin{enumerate}
	\item $\Lambda_i > 0$ for all $i$. $P_i$'s are mutually distinct. \footnote{For notation, see \hyperref[3.4]{Section \ref*{3.4}} and \hyperref[27]{Example \ref*{27}}}
	
	Consider four equivariant line bundles $L_1, L_2, L_3, L_4 \subset \mF$ generated by $P_1, P_2, P_3, P_4$ respectively. 
	$$\begin{array}{ll}
	L_1 = L_{B_1, B_2 + \Lambda_2, \Lambda_3 , \Lambda_4}, &  L_2 = L_{B_1+\Lambda_1, B_2 , \Lambda_3 , \Lambda_4}, \\
	L_3 = L_{B_1+\Lambda_1, B_2+\Lambda_2 , 0 , \Lambda_4}, &
	L_4 = L_{B_1+\Lambda_1, B_2+\Lambda_2 , \Lambda_3 , 0}. 
	\end{array}$$
	Any rank $1$ equivariant subsheaf of $\mF$ is contained in one of $L_i$ and does not have bigger slope. Hence it suffices to test $\mu_{\mE}(L_i) < \mu_{\mE}(\mF)$ for all $L_i$. The stability conditions are given by
	$$\begin{array}{ll}
	\Lambda_1 < pq \Lambda_2 + \Lambda_3 + (r +pq) \Lambda_4, &
	pq \Lambda_2 < \Lambda_1 + \Lambda_3 + (r +pq) \Lambda_4, \\
	\Lambda_3 < \Lambda_1 + pq \Lambda_2 + (r +pq) \Lambda_4, &
	(r +pq) \Lambda_4 < \Lambda_1 + pq \Lambda_2 + \Lambda_3.
	\end{array}$$
	
	Denote by $D$ the set of points $(P_1, P_2, P_3, P_4) \in (\mP^1)^4$ where $P_1, P_2, P_3, P_4$ are mutually distinct. Then the connected component of the fixed locus is given by $D / \text{SL}(2,\mC)$ and $e(D / \text{SL}(2,\mC))=e(\mP^1-\{0,1,\infty\})=-1$.
	
	\item $\Lambda_{i'} = 0$ for some unique $i'$ and $\Lambda_i > 0$ for $i \neq i'$. $P_{i'}$ is omitted and $P_i$'s are mutually distinct for $i \neq {i'}$.
	
	Suppose $\Lambda_1$ is $0$, then the above inequalities are reduced to $$\begin{array}{ll}
	pq \Lambda_2 < \Lambda_3 + (r +pq) \Lambda_4, &
	\Lambda_3 < pq \Lambda_2 + (r +pq) \Lambda_4, \\
	(r +pq) \Lambda_4 < pq \Lambda_2 + \Lambda_3. &
	\end{array}$$ 
	Hence the connected component is $D / \text{SL}(2,\mC)$, where $D$ is the set of points $(P_2, P_3, P_4) \in (\mP^1)^3$ where $P_2, P_3, P_4$ are mutually distinct, and  $e(D / \text{SL}(2,\mC))=1$.
	
	\item $\Lambda_i > 0$ for all $i$. Only two of $P_i$'s are same.     	    
	
	Suppose $P_1 =P_2$, $P_3, P_4$ are mutually distinct. Then we need to consider line bundles $L_1', L_3, L_4$ where $L_1' = L_{(B_1, B_2, \Lambda_3 , \Lambda_4)})$. The stability conditions are are given by
	$$\begin{array}{ll}
	\Lambda_1 + pq \Lambda_2 < \Lambda_3 + (r +pq) \Lambda_4, &
	\Lambda_3 < \Lambda_1 + pq \Lambda_2 + (r +pq) \Lambda_4, \\
	(r +pq) \Lambda_4 < \Lambda_1 + pq \Lambda_2 + \Lambda_3. &
	\end{array}$$  	
	Similar to the case $2$, the topological Euler number of this component is $1$.
\end{enumerate}

Thus there are 11 types of incidence spaces contributing to the generating function similar to the case of Hirzebruch surface in \cite{Koo15}. 

Consider locally free toric sheaves of rank $2$ with fixed first Chern class $c_1= \frac{m}{a}x +n y$ where $c_1(\mD_{\rho_1})=x, c_1(\mD_{\rho_2})=y$. By \hyperref[36]{Proposition \ref*{36}}, one can show that
$$c_1= -(2B_1+\Lambda_1+\Lambda_3+\Lambda_4 r) \frac{x}{a} - (2B_2+\Lambda_2+\Lambda_4) y. $$
Hence
$$
2B_1+\Lambda_1+\Lambda_3+\Lambda_4 r = -m, \quad  
2B_2+\Lambda_2+\Lambda_4 =-n.
$$

If $\mF$ is of the first type mentioned above, then the modified Euler characteristic is given by 
$$\begin{array}{rl}
&  P_\mE \left( (-B_1, -B_2) , 0 \right)+ P_\mE \left( (-B_1-\Lambda_1-\Lambda_3-\Lambda_4r, -B_2-\Lambda_2-\Lambda_4) , 0\right) \\
&  - \Lambda_1 \Lambda_2 - \Lambda_2 \Lambda_3 - \Lambda_3 \Lambda_4 - \Lambda_4 \Lambda_1 \\
= &  \ds \frac{1}{2}(C-r)n + C + m  + \frac{mn}{2} - \frac{n^2 r}{4} - \frac{1}{2} (\Lambda_2+\Lambda_4) (\Lambda_1+\frac{r}{2}\Lambda_2+\Lambda_3-\frac{r}{2} \Lambda_4) \\
\end{array}$$
where $C=a+b+ab-1$. Similarly, we can obtain the modified Euler characteristics for other types. 

Let
$$\mathsf{H}_{c_1}^{\text{vb}}(\mq) = \sum e(M_{\mH_r^{ab}}^{\text{vb}}(2,c_1, \chi_\mE)) \mq^{\chi_\mE}$$
be the generating function. Define $f= \frac{1}{2}(C-r)n + C + m  + \frac{mn}{2} - \frac{n^2 r}{4}.$ Then

\begin{align*}
& \ds \mathsf{H}_{\frac{m}{a}x +n y}^{\text{vb}}(\mq) =  -\sum_{\tiny{ \begin{array}{c}
		\Lambda_1, \Lambda_2, \Lambda_3, \Lambda_4 \in \mZ_{>0} \\ 
		a \mid \Lambda_1, b \mid \Lambda_3\\
		2 \mid -m-\Lambda_1-\Lambda_3-r\Lambda_4 \\ 
		2 \mid -n-\Lambda_2-\Lambda_4 \\            
		\Lambda_1 < pq \Lambda_2 + \Lambda_3 + (r +pq) \Lambda_4 \\
		pq \Lambda_2 < \Lambda_1 + \Lambda_3 + (r +pq) \Lambda_4 \\
		\Lambda_3 < \Lambda_1 + pq \Lambda_2 + (r +pq) \Lambda_4 \\
		(r +pq) \Lambda_4 < \Lambda_1 + pq \Lambda_2 + \Lambda_3
		\end{array} } } \mq^{ f - \frac{1}{2} (\Lambda_2+\Lambda_4) (\Lambda_1+\frac{r}{2}\Lambda_2+\Lambda_3-\frac{r}{2} \Lambda_4)}   \displaybreak \\
& \ds + \sum_{\tiny{ \begin{array}{c}
		\Lambda_1, \Lambda_2, \Lambda_3, \Lambda_4 \in \mZ_{>0} \\ 
		a \mid \Lambda_1, b \mid \Lambda_3\\
		2 \mid -m-\Lambda_1-\Lambda_3-r\Lambda_4 \\ 
		2 \mid -n-\Lambda_2-\Lambda_4 \\            
		\Lambda_1 + \Lambda_3 <  pq \Lambda_2 + (r +pq) \Lambda_4\\
		pq \Lambda_2 < \Lambda_1 + \Lambda_3 + (r +pq) \Lambda_4 \\
		(r +pq) \Lambda_4 < \Lambda_1 + pq \Lambda_2 + \Lambda_3
		\end{array} } }
\mq^{f - \frac{1}{2} (\Lambda_2+\Lambda_4) (\Lambda_1+\frac{r}{2}\Lambda_2+\Lambda_3-\frac{r}{2} \Lambda_4)} + 5 \text{ similar terms}   \\
& \ds +\sum_{\tiny{ \begin{array}{c}
		\Lambda_2, \Lambda_3, \Lambda_4 \in \mZ_{>0} \\ 
		b \mid \Lambda_3\\
		2 \mid -m-\Lambda_3-r\Lambda_4 \\ 
		2 \mid -n-\Lambda_2-\Lambda_4 \\            
		pq \Lambda_2 <   \Lambda_3 + (r +pq) \Lambda_4 \\
		\Lambda_3 < pq \Lambda_2 + (r +pq) \Lambda_4 \\
		(r +pq) \Lambda_4 <  pq \Lambda_2 + \Lambda_3
		\end{array} } } \mq^{f - \frac{1}{2} (\Lambda_2+\Lambda_4) (\frac{r}{2}\Lambda_2+\Lambda_3-\frac{r}{2} \Lambda_4)} +  3 \text{ similar terms}. \\ 
\end{align*}
Note that the first term corresponds to the component of the first type and the negative sign comes from  $e(\mP^1-\{0,1,\infty\})=-1$. The signs for the remaining terms are positive because the topological Euler number is $1$ for the other components.
Using proper substitutions, we can simplify this generating function further.
\begin{proposition}
	Suppose $\normalfont{\text{gcd}}(a,b)=1$. Let $ f=\frac{1}{2}(C-r)n+C+m+\frac{mn}{2}-\frac{n^2 r}{4}$ where $C=a+b+ab-1$. If $r \geq 0$, the generating function $\mathsf{H}_{\frac{m}{a}x +n y}^{\text{vb}}(\mq)$ equals 
	\begin{align*}
	\ds \mathsf{H}_{\frac{m}{a}x +n y}^{\text{vb}}(\mq)  = &   
	\ds \Bigg( - \sum_{C_1} + \sum_{C_6} + \sum_{ C_7 }+  \sum_{ C_8 } +
	\sum_{C_9} 		
	\Bigg) \mq^{f - \frac{1}{2} j (i+\frac{r}{2}j)} 
	\\	&	+  \Bigg( \sum_{C_2} + \sum_{C_3} + 	\sum_{C_4} +	\sum_{C_5} \Bigg)
	\mq^{f - \frac{1}{4}ij  + \frac{1}{4}jk - \frac{1}{4}kl - \frac{1}{4}li -  \frac{r}{4} l^2} 
	\end{align*}
	where
	\begin{align*}
	C_1=\{ & (i,j,k,l) \in \mZ^4: 2 \mid m+i, 2 \mid n+j, 2 \mid j-l, 2a \mid i+k+r(j-l)  , \\ &  2b \mid i-k , i = pqj, 
	-j < l < j,  -pqj-r(j-l)<k<pqj \}, \\
	C_2=\{ & (i,j,k,l) \in \mZ^4: 2 \mid m+i, 2 \mid n+j, 2 \mid j-l, 2a \mid i+k+  r(j+l), \\
	&  2b \mid i-k ,  k < pql < i,   l < j, -i-r(j+l) < k , -pqj-r(j+l) < k \},  \\
	C_3=\{ & (i,j,k,l) \in \mZ^4: 2 \mid m+i, 2 \mid n+j, 2 \mid j-l,  2b \mid i+k +r(j-l),\\
	&  2a \mid i-k,  k < pql < i, l < j, -i-r(j+l) < k , -pqj-r(j+l) < k\},    \\
	C_4=\{ & (i,j,k,l) \in \mZ^4: 2 \mid m+i, 2 \mid n+j, 2 \mid j-l, 2b \mid i+k    -r(j-l) , \\
	&  2a \mid i-k,  k < pql < i, l < j, -i+r(j-l) < k , -pqj < k\},  \\
	C_5=\{ & (i,j,k,l) \in \mZ^4: 2 \mid m+i, 2 \mid n+j, 2 \mid j-l, 2a \mid i+k  -r(j-l) ,  \\
	&  2b \mid i-k, k < pql < i, l < j, -i+r(j-l) < k , -pqj < k \}, \\
	C_6=\{ & (i,j,k) \in \mZ^3: 2 \mid m+i, 2 \mid n+j, 2 \mid j+k,  2b \mid 2i+r(j+k) ,  \\
	&  -\frac{r}{2}(j+k) < i  , i < pqj  -\frac{i}{r+pq} - \frac{rj}{r+pq} < k < \frac{i}{pq} \},  \\
	C_7=\{ & (i,j,k) \in \mZ^3: 2 \mid m+i, 2 \mid n+j, 2 \mid j+k,  2a \mid 2i+r(j+k) ,   \\
	&  -\frac{r}{2}(j+k) < i  ,  i < pqj -\frac{i}{r+pq} - \frac{rj}{r+pq} < k < \frac{i}{pq} \}, \displaybreak  \\
	C_8=\{ & (i,j,k) \in \mZ^3: 2 \mid m+i, 2 \mid n+j, 2a \mid i+k+ 2 rj,  2b \mid i-k  , \\ & -pqj- 2rj < k < pqj < i \},\\
	C_9=\{ & (i,j,k) \in \mZ^3: 2 \mid m+i, 2 \mid n+j, 2a \mid i+k,  2b  \mid i-k, \\ &    -pqj<k<pqj < i  \},
	\end{align*}
\end{proposition}

\begin{proof}
	Set $i=\Lambda_1+\Lambda_3-r\Lambda_4, j=\Lambda_2+\Lambda_4, k=\Lambda_1-\Lambda_3-r\Lambda_4, l=\Lambda_2-\Lambda_4$. The first term is split into two 
	$$\begin{array}{l}
	\ds \sum_{\tiny{ \begin{array}{c}
			i,j,k,l \in \mZ \\ 
			2 \mid m+i, 2 \mid n+j, 2 \mid j-l \\ 
			2b \mid i-k, 2a \mid i+k+r(j-l)  \\            
			pqj \leq i, -j < l < j\\
			-pqj-r(j-l)<k<pqj
			\end{array} } } \mq^{f  - \frac{1}{2} j (i+\frac{r}{2}j)} 
	\ds  \sum_{\tiny{ \begin{array}{c}
			i,j,k,l \in \mZ \\ 
			2 \mid m+i, 2 \mid n+j, 2 \mid j-l \\ 
			2b \mid i-k, 2a \mid i+k+r(j-l)  \\            
			i < pqj, -i < pql < i+r(j-l)\\
			-i-r(j-l) < k < i
			\end{array} } } \mq^{f - \frac{1}{2} j (i+\frac{r}{2}j)} 
	\end{array}$$
	based on whether $pqj \leq i$ or $pqj > i$. By same substitutions, the first three terms can be combined into one. The remaining terms can be obtained by the following substitution. 
	
	$$\begin{tabular}{|c|l|}
	\hline
	Term & Substitutions \\ \hline
	$4$th & $i=\Lambda_1+\Lambda_3-r\Lambda_4, j=\Lambda_2+\Lambda_4, k=\Lambda_1-\Lambda_3-r\Lambda_4, l=\Lambda_4-\Lambda_2$\\\hline
	$5$th & $i=\Lambda_1+\Lambda_3-r\Lambda_4, j=\Lambda_2+\Lambda_4, k=-\Lambda_1+\Lambda_3-r\Lambda_4, l=\Lambda_4-\Lambda_2$ \\\hline
	$6$th & $i=\Lambda_1+\Lambda_3+r\Lambda_4, j=\Lambda_2+\Lambda_4, k=-\Lambda_1+\Lambda_3+r\Lambda_4, l=\Lambda_2-\Lambda_4$ \\\hline
	$7$th & $i=\Lambda_1+\Lambda_3+r\Lambda_4, j=\Lambda_2+\Lambda_4, k=\Lambda_1-\Lambda_3+r\Lambda_4, l=\Lambda_2-\Lambda_4$ \\ \hline
	$8$th & $i=\Lambda_3-r\Lambda_4, j=\Lambda_2+\Lambda_4, k=\Lambda_4-\Lambda_2$ \\ \hline
	$9$th & $i=\Lambda_1+\Lambda_3-r\Lambda_4, j=\Lambda_4, k=\Lambda_1-\Lambda_3-r\Lambda_4$ \\ \hline
	$10$th & $i=\Lambda_1-r\Lambda_4, j=\Lambda_2+\Lambda_4, k=\Lambda_4-\Lambda_2$ \\ \hline
	$11$th & $i=\Lambda_1+\Lambda_3, j=\Lambda_2, k=\Lambda_1-\Lambda_3$ \\ \hline
	\end{tabular} $$   
\end{proof}

If $r=0$, the above result yields the \hyperref[3]{Theorem \ref*{3}} for the orbifold $\mP(a,b) \times \mP^1$.

\begin{remark}
	If $a=b=1$, the orbifold becomes the variety $\mP^1 \times \mP^1$ and $f=\frac{mn}{2}+m+n+2$. Consider a torsion free sheaf $\mF$ of rank $2$ with $c_1= m x +n y$ where $c_1(\mD_{\rho_1})=x, c_1(\mD_{\rho_2})=y$. Suppose $c_2(\mF)= c xy$. One can show that 
	$\chi(\mF)= - c + mn + m + n +2.$
	Hence the above generating function agrees with the one given in \cite[Corollary 2.3.4]{Koo10} when $\lambda=1$. Note that the divisor $D_4$ in \cite{Koo10} is really $D_2$ in our paper, but $D_2 \sim D_4$ in the case of $\mP^1 \times \mP^1$.
\end{remark}

Let $(i,j) \in \text{Pic}(\mH_{r}^{ab})$. One can show that tensoring $ - \otimes (i,j)$ preserves $\mu$-stability. Suppose $\mF$ is a locally free toric sheaf of rank $2$ on the Hirzebruch orbifold $\mH_{r}^{ab}$ with $c_1(\mF)= \frac{m}{a}x +n y$. Then 
$$\chi_\mE(\mF \otimes (i,j)) = \chi_\mE(\mF) +  i (2+n+2j) + j (ab+a+b-1-r+m-nr-rj).$$

Let $g(i,j) = i (2+n+2j) + j (ab+a+b-1-r+m-nr-rj)$. We obtain an isomorphism
$$M_{\mH_r^{ab}}^{\text{vb}} (2,c_1,\chi_\mE) \cong M_{\mH_r^{ab}}^{\text{vb}} (2,c_1+\dfrac{2i}{a}x + 2j y,\chi_\mE+  g(i,j)),$$
which induces
$$ \sum_{\chi_\mE \in \mZ} e(M_{\mH_r^{ab}}^{\text{vb}} (2,c_1 +\dfrac{2i}{a}x + 2j y,\chi_\mE)) \mq^{\chi_\mE} = \mq^{g(i,j)} \sum_{\chi_\mE \in \mZ} e(M_{\mH_r^{ab}}^{\text{vb}} (2,c_1,\chi_\mE)) \mq^{\chi_\mE}. $$

Thus for the Hirzebruch orbifold, the only interesting cases for the generating functions are $(m,n)= (0,0), (0,1), (1,0)$ and $(1,1)$.

\begin{proposition} \label{412} 
	Consider the orbifold $\mH_{0}^{12}$, which is $\mP(1,2) \times \mP^1$. In this case, $r=0, a=1, b=2, p=1, q=2, C=4$. Let $c_1(\mF)= mx +n y$ where $c_1(\mD_{\rho_1})=x$ and $c_1(\mD_{\rho_2})=y$.
	
	\begin{enumerate}
		\item  If $(m,n)=(0,0)$, then $f=4$. 
		\begin{align*}
		 \mathsf{H}_{0}^{\text{vb}}(\mq)   = & \ds  - \sum_{t=1}^{\infty}  (2t-1)^2 \mq^{4-4t^2}\\
		&\ds +  \sum_{t=1}^{\infty}  \sum_{u=1}^{\infty} \sum_{p=1}^{2t}  
		4 \mq^{4- (4t+4)(t-p+1)-2p-2u }  \,   \frac{\mq^{-(2u+2p)p}-\mq^{-(2u+2p)(2t+1)}}{1-\mq^{-(2u+2p)}} \\
		&\ds +  \sum_{t=1}^{\infty}   \sum_{u=1}^{\infty} \sum_{p=1}^{2t}  
		4\mq^{4-(4t+2)(t-p+1)} \,  \frac{\mq^{-(2u+2p-2)p}-\mq^{-(2u+2p-2)(2t+1)}}{1-\mq^{-(2u+2p-2)}}   \\
		&\ds +  \sum_{t=1}^{\infty}   \sum_{p=1}^{2t}  
		4\mq^{4- (4t+4)(t-p+1) - 2p} \,  \frac{\mq^{-2p^2}-\mq^{-(2t+1)(2p)} }{(1-\mq^{-(4t+4-2p)}) (1-\mq^{-2p}) }    \\
		&\ds + \sum_{t=1}^{\infty}    \sum_{p=1}^{2t-1}  
		4 \mq^{4- 2t(2t-2p+1)} \,  \frac{\mq^{-2p^2}-\mq^{-4pt} }{(1-\mq^{-(4t-2p)}) (1-\mq^{-2p}) }  \\
		&\ds + \sum_{t=1}^{\infty} 2 (2t-1) \frac{\mq^{4 - 4t(t+1)}  }{1-\mq^{-4t}} +  \sum_{t=1}^{\infty} 2 (2t-1) \frac{\mq^{4 - (4t-2)t }  }{1-\mq^{-(4t-2)}} \\
		& \ds + \sum_{t=1}^{\infty} 2 (2t-1) \frac{\mq^{4 - 4t(t+1) }  }{1-\mq^{-4t}} + \sum_{t=1}^{\infty} 2 (2t-1) \frac{\mq^{4 - 2t(2t+1) }  }{1-\mq^{-4t}} \\
		 = & \, 2\mq^2 + 5 + \frac{8}{\mq^2} + \frac{18}{\mq^4} + O\left[\frac{1}{\mq}\right]^5.
		\end{align*}	
		
		\item If $(m,n)=(1,0)$, then $f=5$.    	
		\begin{align*}
		\mathsf{H}_{x}^{\text{vb}}(\mq)  = & 
		\ds \sum_{t=1}^{\infty}  \sum_{u=1}^{\infty} \sum_{p=1}^{2t}  
		2 \mq^{5- (4 t + 1) (t - p + 1) } \, \frac{\mq^{-(2u+2p-2)p}-\mq^{-(2u+2p-2)(2t+1)}}{1-\mq^{-(2u+2p-2)}}  \\
		& \ds + \sum_{t=1}^{\infty}  \sum_{u=1}^{\infty} \sum_{p=1}^{2t}  
		2 \mq^{5-(4t+2)(t-p+1)+t+u} \, \frac{\mq^{-(2u+2p-2)p}-\mq^{-(2u+2p-2)(2t+1)}}{1-\mq^{-(2u+2p-2)}} \\
		&\ds + \sum_{t=1}^{\infty}  \sum_{u=1}^{\infty} \sum_{p=1}^{2t-1}  
		2 \mq^{5-(4t - 1)(t-p)} \, \frac{\mq^{-(2u+2p-2)p}-\mq^{-2t(2u+2p-2)}}{1-\mq^{-(2u+2p-2)}} \\
		& \ds +  \sum_{t=1}^{\infty}  \sum_{u=1}^{\infty} \sum_{p=1}^{2t}  
		2 \mq^{5-(4t+2)(t-p+1)-t-u} \, \frac{\mq^{-(2u+2p-2)p}-\mq^{-(2u+2p-2)(2t+1)}}{1-\mq^{-(2u+2p-2)}} \\
		&\ds +  \sum_{t=1}^{\infty}  \sum_{p=1}^{2t-1}  
		2 \mq^{5 - (4 t + 1) (t - p) - 2 p} \, \frac{\mq^{-2 p^2} - 
			\mq^{-4 p t} }{(1 - \mq^{-(4 t - 2 p)}) (1-\mq^{-2p}) }   \\
		& \ds + \sum_{t=1}^{\infty}  \sum_{p=1}^{2t-1}  
		2  \mq^{5-(4t+1)(t-p)-p} \, \frac{\mq^{-2p^2}-\mq^{-4pt} }{(1-\mq^{-(4t-2p)}) (1-\mq^{-2p}) }  \\
		&\ds + \sum_{t=1}^{\infty}  \sum_{p=1}^{2t-1}  
		2 \mq^{5 - (4 t + 3) (t - p) - 2 p} \, \frac{\mq^{-2p^2}-\mq^{-4pt} }{(1-\mq^{-(4t-2p)}) (1-\mq^{-2p}) }   \displaybreak  \\
		& \ds +  \sum_{t=1}^{\infty}  \sum_{p=1}^{2t-1}  
		2 \mq^{5-(4t+3)(t-p)-3p} \, \frac{\mq^{-2p^2}-\mq^{-4pt} }{(1-\mq^{-(4t-2p)}) (1-\mq^{-2p}) } \\ 
		&\ds + \sum_{t=1}^{\infty} 2 t \frac{\mq^{5 - (4 t + 1) (t + 1)}  }{1 - \mq^{-(4 t + 1)}} +  \sum_{t=1}^{\infty}  2t \, \frac{\mq^{5 - (4t-1)t}  }{1-\mq^{-(4t-1)}} + \sum_{t=1}^{\infty} 4 t \frac{\mq^{5 - (4 t + 1) t  }  }{1-\mq^{-2t}}   \\
	    =  	& \, 2\mq^3 + 4\mq^2 + 6\mq + 8 + \frac{12}{\mq} + \frac{12}{\mq^2} + \frac{14}{\mq^3} + \frac{20}{\mq^4} + O \left[\frac{1}{\mq}\right]^{5}.
		\end{align*}
		
		\item 	If $(m,n)=(0,1)$, then $f=6$.
		\begin{align*}
		\mathsf{H}_{y}^{\text{vb}}(\mq)  = & - \sum_{t=1}^{\infty}  4t^2 \mq^{6-(2t+1)^2}\\
		& \ds + \sum_{t=1}^{\infty}  \sum_{u=1}^{\infty} \sum_{p=1}^{2t-1}  
		4  \mq^{6-2t(2t-2p+1) } \, \frac{\mq^{-(2u+2p-2)p}-\mq^{-2t(2u+2p-2)}}{1-\mq^{-(2u+2p-2)}}   \\
		& \ds +  \sum_{t=1}^{\infty}  \sum_{u=1}^{\infty} \sum_{p=1}^{2t-1}  
		4\mq^{5-4t(t-p+1)-2u} \, \frac{\mq^{-(2u+2p)p}-\mq^{-2t(2u+2p)}}{1-\mq^{-(2u+2p)}} \\
		& \ds + \sum_{t=1}^{\infty}  \sum_{p=1}^{2t}  
		4  \mq^{6-(4t+2)(t-p+1)} \, \frac{\mq^{-2p^2}-\mq^{-2p(2t+1)} }{(1-\mq^{-(4t+2-2p)}) (1-\mq^{-2p}) }    \\
		& \ds + \sum_{t=1}^{\infty}  \sum_{p=1}^{2t-1}  
		4 \mq^{5 - 4t(t-p+1)} \, \frac{\mq^{-2p^2}-\mq^{-4pt} }{(1-\mq^{-(4t+2-2p)}) (1-\mq^{-2p}) }  \\
		& \ds + \sum_{t=1}^{\infty} (4t-1) \frac{\mq^{6 - 2t(2t+1)}  }{1-\mq^{-4t}} +  \sum_{t=1}^{\infty} (4t-3) \frac{\mq^{6 - (2t-1)(2t+1) }  }{1-\mq^{-(4t-2)}}  \\
		& \ds + \sum_{t=1}^{\infty} 2(2t-1) \frac{\mq^{6 - 2t(2t-1) }  }{1-\mq^{-(4t-2)}} + \sum_{t=1}^{\infty} 4t \, \frac{\mq^{6 - (2t+1)(2t+3) }  }{1-\mq^{-(4t+2)}}  \\
		= & \, 2\mq^4 + \mq^3 + 6 \mq^2 + \mq + 9 +\frac{5}{\mq} + \frac{14}{\mq^2} + \frac{5}{\mq^3} + \frac{17}{\mq^4} + O\left[\frac{1}{\mq}\right]^{5}.
		\end{align*}
		
		\item  	If $(m,n)=(1,1)$, then $f=\dfrac{15}{2}$.
		\begin{align*}
		\mathsf{H}_{x+y}^{\text{vb}}(\mq) = 
		&  \ds   \sum_{t=1}^{\infty}  \sum_{u=1}^{\infty} \sum_{p=1}^{2t}  
		2 \mq^{7-(4t+3)(t-p)-2p} \, \frac{\mq^{-(2u+2p-2)p}-\mq^{-(2u+2p-2)(2t+1)}}{1-\mq^{-(2u+2p-2)}} \\
		& \ds +  \sum_{t=1}^{\infty}  \sum_{u=1}^{\infty} \sum_{p=1}^{2t-1}  
		2 \mq^{7 - (4t-1)(t-p+1)- u + p} \, \frac{\mq^{-(2u+2p-2)p}-\mq^{-2t(2u+2p-2)}}{1-\mq^{-(2u+2p-2)}}    \\
		& \ds +  \sum_{t=1}^{\infty}  \sum_{u=1}^{\infty} \sum_{p=1}^{2t-1}  
		2 \mq^{8-(4t-1)(t-p)-2p} \, \frac{\mq^{-(2u+2p-2)p}-\mq^{-2t(2u+2p-2)}}{1-\mq^{-(2u+2p-2)}}   \\
		& \ds +  \sum_{t=1}^{\infty}  \sum_{u=1}^{\infty} \sum_{p=1}^{2t-1}  
		2 \mq^{7- (4t+1)(t-p)- p + u} \, \frac{\mq^{-(2u+2p-2)p}-\mq^{-2t(2u+2p-2)}}{1-\mq^{-(2u+2p-2)}}  \displaybreak \\
		& \ds +  \sum_{t=1}^{\infty}  \sum_{p=1}^{2t}  
		2 \mq^{8-(4t+3)(t-p+1)} \, \frac{\mq^{-2p^2}-\mq^{-2p(2t+1)} }{(1-\mq^{-(4t+2-2p)}) (1-\mq^{-2p}) }      \\
		& \ds +  \sum_{t=1}^{\infty}  \sum_{p=1}^{2t}  
		2 \mq^{8-(4t+3)(t-p+1)-p} \, \frac{\mq^{-2p^2}-\mq^{-2p(2t+1)} }{(1-\mq^{-(4t+2-2p)}) (1-\mq^{-2p}) }  \\
		& \ds +  \sum_{t=1}^{\infty}  \sum_{p=1}^{2t}  
		2 \mq^{7-(4t+1)(t-p+1)} \, \frac{\mq^{-2p^2}-\mq^{-2p(2t+1)} }{(1-q^{-(4t+2-2p)}) (1-\mq^{-2p}) }  \\
		& \ds + \sum_{t=1}^{\infty}  \sum_{p=1}^{2t}  
		2 \mq^{7-(4t+1)(t-p+1)+ p} \, \frac{\mq^{-2p^2}-\mq^{-2p(2t+1)} }{(1-\mq^{-(4t+2-2p)}) (1-\mq^{-2p}) }   \\
		& \ds + \sum_{t=1}^{\infty}  2t \, \frac{\mq^{\frac{15}{2} - \frac{1}{2}(2t+1)(4t+1)}  }{1-\mq^{-(4t+1)}} + \sum_{t=1}^{\infty}  2t \, \frac{\mq^{\frac{15}{2} - \frac{1}{2}(2t+1)(4t-1)}  }{1-\mq^{-(4t-1)}}  \\
		& \ds + \sum_{t=1}^{\infty} 2(2t-1) \, \frac{\mq^{\frac{15}{2} - \frac{1}{2}(2t-1)(4t+1) }  }{1-\mq^{-(4t-2)}} + \sum_{t=1}^{\infty} 2(2t-1) \, \frac{\mq^{\frac{15}{2} - \frac{1}{2}(2t-1)(4t-1) }  }{1-\mq^{-(4t-2)}}  \\
	=  	& \ds \, 2 \mq^6 + 4\mq^5+ 6 \mq^4 + 8 \mq^3 + 10 \mq^2 + 14 + \frac{14}{\mq} +  \frac{18}{\mq^2} +  \frac{24}{\mq^3} +  \frac{22}{\mq^4} + O \left[ \frac{1}{\mq} \right]^{5}.
		\end{align*}
	\end{enumerate}
\end{proposition}

\begin{proof}
	We will show how to rewrite the sums over $C_2$ and $C_3$ in the case of $(m,n)=(0,0)$. The calculation of other parts is similar.
	
	The second and third terms can be combined into one
	$$\sum_{C_2'} 4 \mq^{4 -  \frac{1}{4}ij  + \frac{1}{4}jk - \frac{1}{4}kl - \frac{1}{4}li}
	$$ 
	where 
	\begin{multline*}
	C_2'=\{ (i,j,k,l) \in \mZ^4: 2 \mid i, 2 \mid j, 2 \mid l, 2 \mid k, 
	4 \mid i-k , -i < k < 2l < i, -2j < k , l < j \}.
	\end{multline*}
	
	It can be then split into two terms by either $i < 2j$ or $2j \leq i$.
	$$\Big( \sum_{C_2''} + \sum_{C_3''}   \Big) 4 \mq^{4 -  \frac{1}{4}ij  + \frac{1}{4}jk - \frac{1}{4}kl - \frac{1}{4}li}
	$$
	where
	\begin{align*}
	C_2''= & \{ (i,j,k,l) \in \mZ^4: 2 \mid i, 2 \mid j, 2 \mid l, 2 \mid k,  4 \mid i-k ,   -2j <-i < k < 2l < i < 2j \}, \\
	C_3''= & \{ (i,j,k,l) \in \mZ^4: 2 \mid i, 2 \mid j, 2 \mid l, 2 \mid k,  4 \mid i-k ,    -i < -2j < k < 2l < 2j <  i  \} .
	\end{align*}	
	
	Suppose $i=4t+4$, then we have the following picture for the case $i < 2j$.
	$$   \begin{tikzpicture}
	\draw[thick] (-5,0) -- (6.5,0);
	\foreach \x in {-4,-3,-2,-1,-0.5, 0,2,3,4,6}
	\draw (\x cm,2pt) -- (\x cm,-2pt);
	\node[below] at (4,0) {$4t+4$};
	\node[above] at (4,0) {$i$};
	\node[below] at (2,0) {$2t+2$};
	\node[above] at (2,0) {$\frac{i}{2}$};
	\node[below] at (-2,0) {$-2t-2$};
	\node[above] at (-2,0) {$-\frac{i}{2}$};
	\node[below] at (-4,0) {$-4t-4$};
	\node[above] at (-4,0) {$-i$};
	\node[below] at (0,0) {$0$};
	\node[above] at (3,0) {$j$};
	\node[above] at (6,0) {$2j$};
	\node[above] at (-0.5,0) {$l$};
	\node[above] at (-1,0) {$2l$};
	\node[above] at (-3,0) {$k$};
	\end{tikzpicture}	
	$$	
	Hence $j=2t+2+2u, l=2t+2-2p, k=4t+4-4p-4s$ and 
	\begin{align*}
	\sum_{C_2'' } \mq^{\frac{1}{4}ij  -  \frac{1}{4}jk + \frac{1}{4}kl + \frac{1}{4}li} &  =  
	\sum_{t=1}^{\infty}  \sum_{u=1}^{\infty} \sum_{p=1}^{2t}  \sum_{s=1}^{2t+1-p} 
	\mq^{4 t^2 + 8 t - 4 pt - 4 p + 2 up + 2 p^2 + 4 + 2 s (u + p)} \\
	& =  \sum_{t=1}^{\infty}  \sum_{u=1}^{\infty} \sum_{p=1}^{2t}  
	\mq^{(4t+4)(t-p+1)+2p+2u  } \, \frac{\mq^{(2u+2p)p}-\mq^{(2u+2p)(2t+1)}}{1-\mq^{2u+2p}}.
	\end{align*}
	This is the second term of the generating function in the case of $(m,n)=(0,0)$. 
	
	Suppose $i=4t+2$, we will obtain the third term. The fourth and fifth terms come from the case when $2j \leq i$.
	
	Basically, we split the terms by $4 \mid i$ or $4 \mid i+2$ when $i$ is even, and by $4 \mid i+1$ or $4 \mid i+3$ when $i$ is odd. Then the result follows from tedious calculation.
\end{proof}

\end{document}